\documentclass[a4paper, 12pt]{article} 
\usepackage{amsmath} 
\usepackage{amsthm}
\usepackage[all]{xy}
\usepackage{amsfonts}
\usepackage{amssymb}
\usepackage{graphicx}      
 \def\dated#1{\def\thedate{#1}}%
 \dated{2008-12-02}%
 % This is a package of commutative diagram macros built on top of Xy-pic%
 % by Michael Barr (email:  barr@barrs.org).  Its use is unrestricted.  It%
 % may be freely distributed, unchanged, for non-commercial or commercial%
 % use.  If changed, it must be renamed.  Inclusion in a commercial%
 % software package is also permitted, but I would appreciate receiving a%
 % free copy for my personal examination and use.  There are no guarantees%
 % that this package is good for anything.  I have tested it with LaTeX 2e,%
 % LaTeX 2.09 and Plain TeX.  Although I know of no reason it will not work%
 % with AMSTeX, I have not tested it.%
 % Added 2003-05-10: I now know that the only clash is with \square in%
 % amssymb, which should therefore be loaded first.  If the amssymb%
 % \square is required, you can \let\box\square and use \box instead.%

\newcount\atcode \atcode=\catcode`\@%
\catcode`\@=12%
\input xy
\xyoption{arrow}
\xyoption{curve}
\catcode`\@=11
 \newdimen\xydashw@@
 \xydashw@@\xydashw@
 \multiply\xydashw@@ by 7
 \divide\xydashw@@ by 4
\setbox\zerodotbox@=\hbox{\dimen@=.5\xydashw@@
 \kern-\dimen@ \vrule width\xydashw@@ height\dimen@ depth\dimen@}
\wd\zerodotbox@=\z@ \ht\zerodotbox@=\z@ \dp\zerodotbox@=\z@
\catcode`\@=12

\newdir{ >}{{ }*!/-.9em/@{>}}%
 % \newdir{ (}{{ }*!/-.5em/@{(}}%
\newdir^{ (}{{ }*!/-.5em/@^{(}}%
\newdir{< }{!/.9em/@{<}*{ }}%
\newdir{) }{!/.9em/@^{)}*{ }}%
\newdir{ (}{{ }*!/-.5em/@^{(}}%
\newdir{d}{\object{.}}

\newdimen\high%
\newdimen\ul%
\newcount\deltax%
\newcount\deltay%
\newcount\deltaX%
\newcount\deltaXprime%
\newcount\deltaY%
\newcount\deltaYprime%

\newdimen\wdth%
\newcount\xend%
\newcount\yend%
\newcount\Xend%
\newcount\Yend%
\newcount\xpos%
\newcount\ypos%
\newcount\default \default=500%
\newcount\defaultmargin \defaultmargin=150%
\newcount\topw%
\newcount\botw%
\newcount\Xpos%
\newcount\Ypos%
\def\ratchet#1#2{\ifnum#1<#2\global #1=#2\fi}%

\catcode`\@=11%
\expandafter\ifx\csname @ifnextchar\endcsname\relax%
\def\ifnextchar#1#2#3{\let\@tempe%
#1\def\@tempa{#2}\def\@tempb{#3}\futurelet%
    \@tempc\@ifnch}%
\def\@ifnch{\ifx \@tempc \@sptoken \let\@tempd\@xifnch%
      \else \ifx \@tempc \@tempe\let\@tempd\@tempa\else\let\@tempd\@tempb\fi%
      \fi \@tempd}%
\def\:{\let\@sptoken= } \:  % this makes \@sptoken a space token%
\def\:{\@xifnch} \expandafter\def\: {\futurelet\@tempc\@ifnch}%
\else%
\let\ifnextchar\@ifnextchar%
\fi%
 % \ifx\check@mathfonts\undefined%
 % \else \check@mathfonts%
 % \fi%
\setbox0=\hbox{$x$}
\newdimen\axis \axis=\fontdimen22\textfont2%
\ul=.01em%
\X@xbase =.01em%
\Y@ybase =.01em%
\def\scalefactor#1{\ul=#1\ul \X@xbase=#1\X@xbase \Y@ybase=#1\Y@ybase}%
\catcode`\@=12%

\def\fontscale#1{%
\if#1h\relax%
\font\xydashfont=xydash10 scaled \magstephalf%
\font\xyatipfont=xyatip10 scaled \magstephalf%
\font\xybtipfont=xybtip10 scaled \magstephalf%
\font\xybsqlfont=xybsql10 scaled \magstephalf%
\font\xycircfont=xycirc10 scaled \magstephalf%
\else%
\font\xydashfont=xydash10 scaled \magstep#1%
\font\xyatipfont=xyatip10 scaled \magstep#1%
\font\xybtipfont=xybtip10 scaled \magstep#1%
\font\xybsqlfont=xybsql10 scaled \magstep#1%
\font\xycircfont=xycirc10 scaled \magstep#1%
\fi}%

\def\bfig{\vcenter\bgroup\xy}%
\def\efig{\endxy\egroup}%

\def\car#1#2\nil{#1}%

\def\morphism{\ifnextchar({\morphismp}{\morphismp(0,0)}}%
\def\morphismp(#1){\ifnextchar|{\morphismpp(#1)}{\morphismpp(#1)|a|}}%
\def\morphismpp(#1)|#2|{\ifnextchar/{\morphismppp(#1)|#2|}%
    {\morphismppp(#1)|#2|/>/}}%
\def\morphismppp(#1)|#2|/#3/{%
    \ifnextchar<{\morphismpppp(#1)|#2|/#3/}%
    {\morphismpppp(#1)|#2|/#3/<\default,0>}}%

\def\morphismpppp(#1,#2)|#3|/#4/<#5,#6>[#7`#8;#9]{%
\xend#1\advance \xend by #5%
\yend#2\advance \yend by #6%
\domorphism(#1,#2)|#3|/#4/<#5,#6>[{#7}`{#8};{#9}]}%

\def\domorphism(#1,#2)|#3|/#4/<#5,#6>[#7`#8;#9]{%
 % Check if arrow arg has an @; then don't add it.%
\def\next{\car#4.\nil}%
\if@\next\relax%
 \if#3l%
  \ifnum #6>0%
   \POS(#1,#2)*+!!<0ex,\axis>{#7}\ar#4^-{#9} (\xend,\yend)*+!!<0ex,\axis>{#8}%
  \else%
   \POS(#1,#2)*+!!<0ex,\axis>{#7}\ar#4_-{#9} (\xend,\yend)*+!!<0ex,\axis>{#8}%
  \fi%
 \else \if#3m%
    \setbox0\hbox{$#9$}%
   \ifdim \wd0=0pt%
     \POS(#1,#2)*+!!<0ex,\axis>{#7}\ar#4 (\xend,\yend)*+!!<0ex,\axis>{#8}%
   \else%
     \POS(#1,#2)*+!!<0ex,\axis>{#7}\ar#4|-*+<1pt,4pt>{\labelstyle#9}%
       (\xend,\yend)*+!!<0ex,\axis>{#8}%
   \fi%
 \else \if#3r%
  \ifnum #6<0%
   \POS(#1,#2)*+!!<0ex,\axis>{#7}\ar#4^-{#9} (\xend,\yend)*+!!<0ex,\axis>{#8}%
  \else%
   \POS(#1,#2)*+!!<0ex,\axis>{#7}\ar#4_-{#9} (\xend,\yend)*+!!<0ex,\axis>{#8}%
  \fi%
 \else \if#3a%
  \ifnum #5>0%
   \POS(#1,#2)*+!!<0ex,\axis>{#7}\ar#4^-{#9} (\xend,\yend)*+!!<0ex,\axis>{#8}%
  \else%
   \POS(#1,#2)*+!!<0ex,\axis>{#7}\ar#4_-{#9} (\xend,\yend)*+!!<0ex,\axis>{#8}%
  \fi%
 \else \if#3b%
  \ifnum #5<0%
   \POS(#1,#2)*+!!<0ex,\axis>{#7}\ar#4^-{#9} (\xend,\yend)*+!!<0ex,\axis>{#8}%
  \else%
   \POS(#1,#2)*+!!<0ex,\axis>{#7}\ar#4_-{#9} (\xend,\yend)*+!!<0ex,\axis>{#8}%
  \fi%
 \else%
   \POS(#1,#2)*+!!<0ex,\axis>{#7}\ar#4 (\xend,\yend)*+!!<0ex,\axis>{#8}%
 \fi\fi\fi\fi\fi%
 %% Otherwise, have to add @{..}%
\else%
 \if#3l%
  \ifnum #6>0%
   \POS(#1,#2)*+!!<0ex,\axis>{#7}\ar@{#4}^-{#9} (\xend,\yend)*+!!<0ex,\axis>{#8}%
  \else%
   \POS(#1,#2)*+!!<0ex,\axis>{#7}\ar@{#4}_-{#9} (\xend,\yend)*+!!<0ex,\axis>{#8}%
  \fi%
 \else \if#3m%
    \setbox0\hbox{$#9$}%
   \ifdim \wd0=0pt%
     \POS(#1,#2)*+!!<0ex,\axis>{#7}\ar@{#4} (\xend,\yend)*+!!<0ex,\axis>{#8}%
   \else%
     \POS(#1,#2)*+!!<0ex,\axis>{#7}\ar@{#4}|-*+<1pt,4pt>{\labelstyle#9}%
         (\xend,\yend)*+!!<0ex,\axis>{#8}%
   \fi%
 \else \if#3r%
  \ifnum #6<0%
   \POS(#1,#2)*+!!<0ex,\axis>{#7}\ar@{#4}^-{#9} (\xend,\yend)*+!!<0ex,\axis>{#8}%
  \else%
   \POS(#1,#2)*+!!<0ex,\axis>{#7}\ar@{#4}_-{#9} (\xend,\yend)*+!!<0ex,\axis>{#8}%
  \fi%
 \else \if#3a%
  \ifnum #5>0%
   \POS(#1,#2)*+!!<0ex,\axis>{#7}\ar@{#4}^-{#9} (\xend,\yend)*+!!<0ex,\axis>{#8}%
  \else%
   \POS(#1,#2)*+!!<0ex,\axis>{#7}\ar@{#4}_-{#9} (\xend,\yend)*+!!<0ex,\axis>{#8}%
  \fi%
 \else \if#3b%
  \ifnum #5<0%
   \POS(#1,#2)*+!!<0ex,\axis>{#7}\ar@{#4}^-{#9} (\xend,\yend)*+!!<0ex,\axis>{#8}%
  \else%
   \POS(#1,#2)*+!!<0ex,\axis>{#7}\ar@{#4}_-{#9} (\xend,\yend)*+!!<0ex,\axis>{#8}%
  \fi%
 \else%
   \POS(#1,#2)*+!!<0ex,\axis>{#7}\ar@{#4} (\xend,\yend)*+!!<0ex,\axis>{#8}%
 \fi\fi\fi\fi\fi%
\fi\ignorespaces}%

\def\vect(#1,#2)/#3/<#4,#5>{%
 \xend#1 \yend#2 \advance\xend by #4 \advance\yend by #5%
     \POS(#1,#2)\ar#3 (\xend,\yend)}%

\def\squarepppp(#1,#2)|#3|/#4`#5`#6`#7/<#8>[#9]{%
\xpos#1\ypos#2%
\def\next|##1##2##3##4|{%
 \def\xa{##1}\def\xb{##2}\def\xc{##3}\def\xd{##4}\ignorespaces}%
\next|#3|%
\def\next<##1,##2>{\deltax=##1\deltay=##2\ignorespaces}%
\next<#8>%
\def\next[##1`##2`##3`##4;##5`##6`##7`##8]{%
    \def\nodea{##1}\def\nodeb{##2}\def\nodec{##3}\def\noded{##4}%
    \def\labela{##5}\def\labelb{##6}\def\labelc{##7}\def\labeld{##8}\ignorespaces}%
\next[#9]%
\morphism(\xpos,\ypos)|\xd|/{#7}/<\deltax,0>[\nodec`\noded;\labeld]%
\advance \ypos by \deltay%
\morphism(\xpos,\ypos)|\xb|/{#5}/<0,-\deltay>[\nodea`\nodec;\labelb]%
\morphism(\xpos,\ypos)|\xa|/{#4}/<\deltax,0>[\nodea`\nodeb;\labela]%
 \advance \xpos by \deltax%
\morphism(\xpos,\ypos)|\xc|/{#6}/<0,-\deltay>[\nodeb`\noded;\labelc]%
\ignorespaces}%

\def\square{\ifnextchar({\squarep}{\squarep(0,0)}}%
\def\squarep(#1){\ifnextchar|{\squarepp(#1)}{\squarepp(#1)|alrb|}}%
\def\squarepp(#1)|#2|{\ifnextchar/{\squareppp(#1)|#2|}%
    {\squareppp(#1)|#2|/>`>`>`>/}}%
\def\squareppp(#1)|#2|/#3`#4`#5`#6/{%
    \ifnextchar<{\squarepppp(#1)|#2|/#3`#4`#5`#6/}%
    {\squarepppp(#1)|#2|/#3`#4`#5`#6/<\default,\default>}}%

\def\diamondpppp(#1,#2)|#3|/#4`#5`#6`#7/<#8>[#9]{%
\xpos#1\ypos#2%
\def\next|##1##2##3##4|{%
 \def\xa{##1}\def\xb{##2}\def\xc{##3}\def\xd{##4}\ignorespaces}%
\next|#3|%
\def\next<##1,##2>{\deltax=##1\deltay=##2\ignorespaces}%
\next<#8>%
\def\next[##1`##2`##3`##4;##5`##6`##7`##8]{%
    \def\nodea{##1}\def\nodeb{##2}\def\nodec{##3}\def\noded{##4}%
    \def\labela{##5}\def\labelb{##6}\def\labelc{##7}%
\def\labeld{##8}\ignorespaces}%
\next[#9]%
\advance\ypos\deltay
\morphism(\xpos,\ypos)|\xc|/{#6}/<\deltax,-\deltay>[\nodeb`\noded;\labelc]%
\advance\xpos \deltax
\advance\xpos \deltax
\morphism(\xpos,\ypos)|\xd|/{#7}/<-\deltax,-\deltay>[\nodec`\noded;\labeld]%
\advance\ypos\deltay \advance\xpos -\deltax
\morphism(\xpos,\ypos)|\xa|/{#4}/<-\deltax,-\deltay>[\nodea`\nodeb;\labela]%
\morphism(\xpos,\ypos)|\xb|/{#5}/<\deltax,-\deltay>[\nodea`\nodec;\labelb]%
}
\def\diamondp(#1){\ifnextchar|{\diamondpp(#1)}{\diamondpp(#1)|lrlr|}}%
\def\diamondpp(#1)|#2|{\ifnextchar/{\diamondppp(#1)|#2|}%
    {\diamondppp(#1)|#2|/>`>`>`>/}}%
\def\diamondppp(#1)|#2|/#3`#4`#5`#6/{%
    \ifnextchar<{\diamondpppp(#1)|#2|/#3`#4`#5`#6/}%
    {\diamondpppp(#1)|#2|/#3`#4`#5`#6/<400,400>}}%

\def\ptrianglepppp(#1,#2)|#3|/#4`#5`#6/<#7>[#8]{%
\xpos#1\ypos#2%
\def\next|##1##2##3|{\def\xa{##1}\def\xb{##2}\def\xc{##3}}%
\next|#3|%
\def\next<##1,##2>{\deltax=##1\deltay=##2\ignorespaces}%
\next<#7>%
\def\next[##1`##2`##3;##4`##5`##6]{%
    \def\nodea{##1}\def\nodeb{##2}\def\nodec{##3}%
    \def\labela{##4}\def\labelb{##5}\def\labelc{##6}}%
\next[#8]%
\advance\ypos by \deltay%
\morphism(\xpos,\ypos)|\xa|/{#4}/<\deltax,0>[\nodea`\nodeb;\labela]%
\morphism(\xpos,\ypos)|\xb|/{#5}/<0,-\deltay>[\nodea`\nodec;\labelb]%
\advance\xpos by \deltax%
\morphism(\xpos,\ypos)|\xc|/{#6}/<-\deltax,-\deltay>[\nodeb`\nodec;\labelc]%
\ignorespaces}%

\def\qtrianglepppp(#1,#2)|#3|/#4`#5`#6/<#7>[#8]{%
\xpos#1\ypos#2%
\def\next|##1##2##3|{\def\xa{##1}\def\xb{##2}\def\xc{##3}}%
\next|#3|%
\def\next<##1,##2>{\deltax=##1\deltay=##2\ignorespaces}%
\next<#7>%
\def\next[##1`##2`##3;##4`##5`##6]{%
    \def\nodea{##1}\def\nodeb{##2}\def\nodec{##3}%
    \def\labela{##4}\def\labelb{##5}\def\labelc{##6}}%
\next[#8]%
\advance\ypos by \deltay%
\morphism(\xpos,\ypos)|\xa|/{#4}/<\deltax,0>[\nodea`\nodeb;\labela]%
\morphism(\xpos,\ypos)|\xb|/{#5}/<\deltax,-\deltay>[\nodea`\nodec;\labelb]%
\advance\xpos by \deltax%
\morphism(\xpos,\ypos)|\xc|/{#6}/<0,-\deltay>[\nodeb`\nodec;\labelc]%
\ignorespaces}%

\def\dtrianglepppp(#1,#2)|#3|/#4`#5`#6/<#7>[#8]{%
\xpos#1\ypos#2%
\def\next|##1##2##3|{\def\xa{##1}\def\xb{##2}\def\xc{##3}}%
\next|#3|%
\def\next<##1,##2>{\deltax=##1\deltay=##2\ignorespaces}%
\next<#7>%
\def\next[##1`##2`##3;##4`##5`##6]{%
    \def\nodea{##1}\def\nodeb{##2}\def\nodec{##3}%
    \def\labela{##4}\def\labelb{##5}\def\labelc{##6}}%
\next[#8]%
\morphism(\xpos,\ypos)|\xc|/{#6}/<\deltax,0>[\nodeb`\nodec;\labelc]%
\advance\ypos by \deltay\advance \xpos by \deltax%
\morphism(\xpos,\ypos)|\xa|/{#4}/<-\deltax,-\deltay>[\nodea`\nodeb;\labela]%
\morphism(\xpos,\ypos)|\xb|/{#5}/<0,-\deltay>[\nodea`\nodec;\labelb]%
\ignorespaces}%

\def\btrianglepppp(#1,#2)|#3|/#4`#5`#6/<#7>[#8]{%
\xpos#1\ypos#2%
\def\next|##1##2##3|{\def\xa{##1}\def\xb{##2}\def\xc{##3}}%
\next|#3|%
\def\next<##1,##2>{\deltax=##1\deltay=##2\ignorespaces}%
\next<#7>%
\def\next[##1`##2`##3;##4`##5`##6]{%
    \def\nodea{##1}\def\nodeb{##2}\def\nodec{##3}%
    \def\labela{##4}\def\labelb{##5}\def\labelc{##6}}%
\next[#8]%
\morphism(\xpos,\ypos)|\xc|/{#6}/<\deltax,0>[\nodeb`\nodec;\labelc]%
\advance\ypos by \deltay%
\morphism(\xpos,\ypos)|\xa|/{#4}/<0,-\deltay>[\nodea`\nodeb;\labela]%
\morphism(\xpos,\ypos)|\xb|/{#5}/<\deltax,-\deltay>[\nodea`\nodec;\labelb]%
\ignorespaces}%

\def\Atrianglepppp(#1,#2)|#3|/#4`#5`#6/<#7>[#8]{%
\xpos#1\ypos#2%
\def\next|##1##2##3|{\def\xa{##1}\def\xb{##2}\def\xc{##3}}%
\next|#3|%
\def\next<##1,##2>{\deltax=##1\deltay=##2\ignorespaces}%
\next<#7>%
\def\next[##1`##2`##3;##4`##5`##6]{%
    \def\nodea{##1}\def\nodeb{##2}\def\nodec{##3}%
    \def\labela{##4}\def\labelb{##5}\def\labelc{##6}}%
\next[#8]%
\multiply\deltax by 2%
\morphism(\xpos,\ypos)|\xc|/{#6}/<\deltax,0>[\nodeb`\nodec;\labelc]%
\divide\deltax by 2%
\advance\ypos by \deltay\advance\xpos by \deltax%
\morphism(\xpos,\ypos)|\xa|/{#4}/<-\deltax,-\deltay>[\nodea`\nodeb;\labela]%
\morphism(\xpos,\ypos)|\xb|/{#5}/<\deltax,-\deltay>[\nodea`\nodec;\labelb]%
\ignorespaces}%

\def\Vtrianglepppp(#1,#2)|#3|/#4`#5`#6/<#7>[#8]{%
\xpos#1\ypos#2%
\def\next|##1##2##3|{\def\xa{##1}\def\xb{##2}\def\xc{##3}}%
\next|#3|%
\def\next<##1,##2>{\deltax=##1\deltay=##2\ignorespaces}%
\next<#7>%
\def\next[##1`##2`##3;##4`##5`##6]{%
    \def\nodea{##1}\def\nodeb{##2}\def\nodec{##3}%
    \def\labela{##4}\def\labelb{##5}\def\labelc{##6}}%
\next[#8]%
\advance\ypos by \deltay%
\morphism(\xpos,\ypos)|\xb|/{#5}/<\deltax,-\deltay>[\nodea`\nodec;\labelb]%
\multiply\deltax by 2%
\morphism(\xpos,\ypos)|\xa|/{#4}/<\deltax,0>[\nodea`\nodeb;\labela]%
\advance\xpos by \deltax \divide \deltax by 2%
\morphism(\xpos,\ypos)|\xc|/{#6}/<-\deltax,-\deltay>[\nodeb`\nodec;\labelc]%
\ignorespaces}%

\def\Ctrianglepppp(#1,#2)|#3|/#4`#5`#6/<#7>[#8]{%
\xpos#1\ypos#2%
\def\next|##1##2##3|{\def\xa{##1}\def\xb{##2}\def\xc{##3}}%
\next|#3|%
\def\next<##1,##2>{\deltax=##1\deltay=##2\ignorespaces}%
\next<#7>%
\def\next[##1`##2`##3;##4`##5`##6]{%
    \def\nodea{##1}\def\nodeb{##2}\def\nodec{##3}%
    \def\labela{##4}\def\labelb{##5}\def\labelc{##6}}%
\next[#8]%
\advance \ypos by \deltay%
\morphism(\xpos,\ypos)|\xc|/{#6}/<\deltax,-\deltay>[\nodeb`\nodec;\labelc]%
\advance\ypos by \deltay \advance \xpos by \deltax%
\morphism(\xpos,\ypos)|\xa|/{#4}/<-\deltax,-\deltay>[\nodea`\nodeb;\labela]%
\multiply\deltay by 2%
\morphism(\xpos,\ypos)|\xb|/{#5}/<0,-\deltay>[\nodea`\nodec;\labelb]%
\ignorespaces}%

\def\Dtrianglepppp(#1,#2)|#3|/#4`#5`#6/<#7>[#8]{%
\xpos#1\ypos#2%
\def\next|##1##2##3|{\def\xa{##1}\def\xb{##2}\def\xc{##3}}%
\next|#3|%
\def\next<##1,##2>{\deltax=##1\deltay=##2\ignorespaces}%
\next<#7>%
\def\next[##1`##2`##3;##4`##5`##6]{%
    \def\nodea{##1}\def\nodeb{##2}\def\nodec{##3}%
    \def\labela{##4}\def\labelb{##5}\def\labelc{##6}}%
\next[#8]%
\advance\xpos by \deltax \advance\ypos by \deltay%
\morphism(\xpos,\ypos)|\xc|/{#6}/<-\deltax,-\deltay>[\nodeb`\nodec;\labelc]%
\advance\xpos by -\deltax \advance\ypos by \deltay%
\morphism(\xpos,\ypos)|\xb|/{#5}/<\deltax,-\deltay>[\nodea`\nodeb;\labelb]%
\multiply \deltay by 2%
\morphism(\xpos,\ypos)|\xa|/{#4}/<0,-\deltay>[\nodea`\nodec;\labela]%
\ignorespaces}%

\def\ptrianglep(#1){\ifnextchar|{\ptrianglepp(#1)}{\ptrianglepp(#1)|alr|}}%
\def\ptrianglepp(#1)|#2|{\ifnextchar/{\ptriangleppp(#1)|#2|}%
    {\ptriangleppp(#1)|#2|/>`>`>/}}%
\def\ptriangleppp(#1)|#2|/#3`#4`#5/{%
    \ifnextchar<{\ptrianglepppp(#1)|#2|/#3`#4`#5/}%
    {\ptrianglepppp(#1)|#2|/#3`#4`#5/<\default,\default>}}%

\def\qtriangle{\ifnextchar({\qtrianglep}{\qtrianglep(0,0)}}%
\def\qtrianglep(#1){\ifnextchar|{\qtrianglepp(#1)}{\qtrianglepp(#1)|alr|}}%
\def\qtrianglepp(#1)|#2|{\ifnextchar/{\qtriangleppp(#1)|#2|}%
    {\qtriangleppp(#1)|#2|/>`>`>/}}%
\def\qtriangleppp(#1)|#2|/#3`#4`#5/{%
    \ifnextchar<{\qtrianglepppp(#1)|#2|/#3`#4`#5/}%
    {\qtrianglepppp(#1)|#2|/#3`#4`#5/<\default,\default>}}%

\def\dtriangle{\ifnextchar({\dtrianglep}{\dtrianglep(0,0)}}%
\def\dtrianglep(#1){\ifnextchar|{\dtrianglepp(#1)}{\dtrianglepp(#1)|lrb|}}%
\def\dtrianglepp(#1)|#2|{\ifnextchar/{\dtriangleppp(#1)|#2|}%
    {\dtriangleppp(#1)|#2|/>`>`>/}}%
\def\dtriangleppp(#1)|#2|/#3`#4`#5/{%
    \ifnextchar<{\dtrianglepppp(#1)|#2|/#3`#4`#5/}%
    {\dtrianglepppp(#1)|#2|/#3`#4`#5/<\default,\default>}}%

\def\btriangle{\ifnextchar({\btrianglep}{\btrianglep(0,0)}}%
\def\btrianglep(#1){\ifnextchar|{\btrianglepp(#1)}{\btrianglepp(#1)|lrb|}}%
\def\btrianglepp(#1)|#2|{\ifnextchar/{\btriangleppp(#1)|#2|}%
    {\btriangleppp(#1)|#2|/>`>`>/}}%
\def\btriangleppp(#1)|#2|/#3`#4`#5/{%
    \ifnextchar<{\btrianglepppp(#1)|#2|/#3`#4`#5/}%
    {\btrianglepppp(#1)|#2|/#3`#4`#5/<\default,\default>}}%

\def\Atriangle{\ifnextchar({\Atrianglep}{\Atrianglep(0,0)}}%
\def\Atrianglep(#1){\ifnextchar|{\Atrianglepp(#1)}{\Atrianglepp(#1)|lrb|}}%
\def\Atrianglepp(#1)|#2|{\ifnextchar/{\Atriangleppp(#1)|#2|}%
    {\Atriangleppp(#1)|#2|/>`>`>/}}%
\def\Atriangleppp(#1)|#2|/#3`#4`#5/{%
    \ifnextchar<{\Atrianglepppp(#1)|#2|/#3`#4`#5/}%
    {\Atrianglepppp(#1)|#2|/#3`#4`#5/<\default,\default>}}%

\def\Vtriangle{\ifnextchar({\Vtrianglep}{\Vtrianglep(0,0)}}%
\def\Vtrianglep(#1){\ifnextchar|{\Vtrianglepp(#1)}{\Vtrianglepp(#1)|alb|}}%
\def\Vtrianglepp(#1)|#2|{\ifnextchar/{\Vtriangleppp(#1)|#2|}%
    {\Vtriangleppp(#1)|#2|/>`>`>/}}%
\def\Vtriangleppp(#1)|#2|/#3`#4`#5/{%
    \ifnextchar<{\Vtrianglepppp(#1)|#2|/#3`#4`#5/}%
    {\Vtrianglepppp(#1)|#2|/#3`#4`#5/<\default,\default>}}%

\def\Ctriangle{\ifnextchar({\Ctrianglep}{\Ctrianglep(0,0)}}%
\def\Ctrianglep(#1){\ifnextchar|{\Ctrianglepp(#1)}{\Ctrianglepp(#1)|arb|}}%
\def\Ctrianglepp(#1)|#2|{\ifnextchar/{\Ctriangleppp(#1)|#2|}%
    {\Ctriangleppp(#1)|#2|/>`>`>/}}%
\def\Ctriangleppp(#1)|#2|/#3`#4`#5/{%
    \ifnextchar<{\Ctrianglepppp(#1)|#2|/#3`#4`#5/}%
    {\Ctrianglepppp(#1)|#2|/#3`#4`#5/<\default,\default>}}%

\def\Dtriangle{\ifnextchar({\Dtrianglep}{\Dtrianglep(0,0)}}%
\def\Dtrianglep(#1){\ifnextchar|{\Dtrianglepp(#1)}{\Dtrianglepp(#1)|lab|}}%
\def\Dtrianglepp(#1)|#2|{\ifnextchar/{\Dtriangleppp(#1)|#2|}%
    {\Dtriangleppp(#1)|#2|/>`>`>/}}%
\def\Dtriangleppp(#1)|#2|/#3`#4`#5/{%
    \ifnextchar<{\Dtrianglepppp(#1)|#2|/#3`#4`#5/}%
    {\Dtrianglepppp(#1)|#2|/#3`#4`#5/<\default,\default>}}%

\def\Atrianglepairpppp(#1)|#2|/#3`#4`#5`#6`#7/<#8>[#9]{%
\def\next(##1,##2){\xpos##1\ypos##2}%
\next(#1)%
\def\next|##1##2##3##4##5|{\def\xa{##1}\def\xb{##2}%
\def\xc{##3}\def\xd{##4}\def\xe{##5}}%
\next|#2|%
\def\next<##1,##2>{\deltax=##1\deltay=##2\ignorespaces}%
\next<#8>%
\def\next[##1`##2`##3`##4;##5`##6`##7`##8`##9]{%
 \def\nodea{##1}\def\nodeb{##2}\def\nodec{##3}\def\noded{##4}%
 \def\labela{##5}\def\labelb{##6}\def\labelc{##7}\def\labeld{##8}\def\labele{##9}}%
\next[#9]%
\morphism(\xpos,\ypos)|\xd|/{#6}/<\deltax,0>[\nodeb`\nodec;\labeld]%
\advance\xpos by \deltax%
\morphism(\xpos,\ypos)|\xe|/{#7}/<\deltax,0>[\nodec`\noded;\labele]%
\advance\ypos by \deltay%
\morphism(\xpos,\ypos)|\xa|/{#3}/<-\deltax,-\deltay>[\nodea`\nodeb;\labela]%
\morphism(\xpos,\ypos)|\xb|/{#4}/<0,-\deltay>[\nodea`\nodec;\labelb]%
\morphism(\xpos,\ypos)|\xc|/{#5}/<\deltax,-\deltay>[\nodea`\noded;\labelc]%
\ignorespaces}%

\def\Vtrianglepairpppp(#1)|#2|/#3`#4`#5`#6`#7/<#8>[#9]{%
\def\next(##1,##2){\xpos##1\ypos##2}%
\next(#1)%
\def\next|##1##2##3##4##5|{\def\xa{##1}\def\xb{##2}%
\def\xc{##3}\def\xd{##4}\def\xe{##5}}%
\next|#2|%
\def\next<##1,##2>{\deltax=##1\deltay=##2\ignorespaces}%
\next<#8>%
\def\next[##1`##2`##3`##4;##5`##6`##7`##8`##9]{%
 \def\nodea{##1}\def\nodeb{##2}\def\nodec{##3}\def\noded{##4}%
 \def\labela{##5}\def\labelb{##6}\def\labelc{##7}\def\labeld{##8}\def\labele{##9}}%
\next[#9]%
\advance\ypos by \deltay%
\morphism(\xpos,\ypos)|\xa|/{#3}/<\deltax,0>[\nodea`\nodeb;\labela]%
\morphism(\xpos,\ypos)|\xc|/{#5}/<\deltax,-\deltay>[\nodea`\noded;\labelc]%
\advance\xpos by \deltax%
\morphism(\xpos,\ypos)|\xb|/{#4}/<\deltax,0>[\nodeb`\nodec;\labelb]%
\morphism(\xpos,\ypos)|\xd|/{#6}/<0,-\deltay>[\nodeb`\noded;\labeld]%
\advance\xpos by \deltax%
\morphism(\xpos,\ypos)|\xe|/{#7}/<-\deltax,-\deltay>[\nodec`\noded;\labele]%
\ignorespaces}%

\def\Ctrianglepairpppp(#1)|#2|/#3`#4`#5`#6`#7/<#8>[#9]{%
\def\next(##1,##2){\xpos##1\ypos##2}%
\next(#1)%
\def\next|##1##2##3##4##5|{\def\xa{##1}\def\xb{##2}%
\def\xc{##3}\def\xd{##4}\def\xe{##5}}%
\next|#2|%
\def\next<##1,##2>{\deltax=##1\deltay=##2\ignorespaces}%
\next<#8>%
\def\next[##1`##2`##3`##4;##5`##6`##7`##8`##9]{%
 \def\nodea{##1}\def\nodeb{##2}\def\nodec{##3}\def\noded{##4}%
 \def\labela{##5}\def\labelb{##6}\def\labelc{##7}\def\labeld{##8}\def\labele{##9}}%
\next[#9]%
\advance\ypos by \deltay%
\morphism(\xpos,\ypos)|\xe|/{#7}/<0,-\deltay>[\nodec`\noded;\labele]%
\advance\xpos by -\deltax%
\morphism(\xpos,\ypos)|\xc|/{#5}/<\deltax,0>[\nodeb`\nodec;\labelc]%
\morphism(\xpos,\ypos)|\xd|/{#6}/<\deltax,-\deltay>[\nodeb`\noded;\labeld]%
\advance\ypos by \deltay%
\advance\xpos by \deltax%
\morphism(\xpos,\ypos)|\xa|/{#3}/<-\deltax,-\deltay>[\nodea`\nodeb;\labela]%
\morphism(\xpos,\ypos)|\xb|/{#4}/<0,-\deltay>[\nodea`\nodec;\labelb]%
\ignorespaces}%

\def\Dtrianglepairpppp(#1)|#2|/#3`#4`#5`#6`#7/<#8>[#9]{%
\def\next(##1,##2){\xpos##1\ypos##2}%
\next(#1)%
\def\next|##1##2##3##4##5|{\def\xa{##1}\def\xb{##2}%
\def\xc{##3}\def\xd{##4}\def\xe{##5}}%
\next|#2|%
\def\next<##1,##2>{\deltax=##1\deltay=##2\ignorespaces}%
\next<#8>%
\def\next[##1`##2`##3`##4;##5`##6`##7`##8`##9]{%
 \def\nodea{##1}\def\nodeb{##2}\def\nodec{##3}\def\noded{##4}%
 \def\labela{##5}\def\labelb{##6}\def\labelc{##7}\def\labeld{##8}\def\labele{##9}}%
\next[#9]%
\advance\ypos by \deltay%
\morphism(\xpos,\ypos)|\xc|/{#5}/<\deltax,0>[\nodeb`\nodec;\labelc]%
\morphism(\xpos,\ypos)|\xd|/{#6}/<0,-\deltay>[\nodeb`\noded;\labeld]%
\advance\ypos by \deltay%
\morphism(\xpos,\ypos)|\xa|/{#3}/<0,-\deltay>[\nodea`\nodeb;\labela]%
\morphism(\xpos,\ypos)|\xb|/{#4}/<\deltax,-\deltay>[\nodea`\nodec;\labelb]%
\advance\ypos by -\deltay%
\advance\xpos by \deltax%
\morphism(\xpos,\ypos)|\xe|/{#7}/<-\deltax,-\deltay>[\nodec`\noded;\labele]%
\ignorespaces}%
\def\Atrianglepairp(#1){\ifnextchar|{\Atrianglepairpp(#1)}%
{\Atrianglepairpp(#1)|lmrbb|}}%
\def\Atrianglepairpp(#1)|#2|{\ifnextchar/{\Atrianglepairppp(#1)|#2|}%
    {\Atrianglepairppp(#1)|#2|/>`>`>`>`>/}}%
\def\Atrianglepairppp(#1)|#2|/#3`#4`#5`#6`#7/{%
    \ifnextchar<{\Atrianglepairpppp(#1)|#2|/#3`#4`#5`#6`#7/}%
    {\Atrianglepairpppp(#1)|#2|/#3`#4`#5`#6`#7/<\default,\default>}}%

\def\Vtrianglepairp(#1){\ifnextchar|{\Vtrianglepairpp(#1)}%
{\Vtrianglepairpp(#1)|aalmr|}}%
\def\Vtrianglepairpp(#1)|#2|{\ifnextchar/{\Vtrianglepairppp(#1)|#2|}%
    {\Vtrianglepairppp(#1)|#2|/>`>`>`>`>/}}%
\def\Vtrianglepairppp(#1)|#2|/#3`#4`#5`#6`#7/{%
    \ifnextchar<{\Vtrianglepairpppp(#1)|#2|/#3`#4`#5`#6`#7/}%
    {\Vtrianglepairpppp(#1)|#2|/#3`#4`#5`#6`#7/<\default,\default>}}%

\def\Ctrianglepairp(#1){\ifnextchar|{\Ctrianglepairpp(#1)}%
{\Ctrianglepairpp(#1)|lrmlr|}}%
\def\Ctrianglepairpp(#1)|#2|{\ifnextchar/{\Ctrianglepairppp(#1)|#2|}%
    {\Ctrianglepairppp(#1)|#2|/>`>`>`>`>/}}%
\def\Ctrianglepairppp(#1)|#2|/#3`#4`#5`#6`#7/{%
    \ifnextchar<{\Ctrianglepairpppp(#1)|#2|/#3`#4`#5`#6`#7/}%
    {\Ctrianglepairpppp(#1)|#2|/#3`#4`#5`#6`#7/<\default,\default>}}%

\def\Dtrianglepairp(#1){\ifnextchar|{\Dtrianglepairpp(#1)}%
{\Dtrianglepairpp(#1)|lrmlr|}}%
\def\Dtrianglepairpp(#1)|#2|{\ifnextchar/{\Dtrianglepairppp(#1)|#2|}%
    {\Dtrianglepairppp(#1)|#2|/>`>`>`>`>/}}%
\def\Dtrianglepairppp(#1)|#2|/#3`#4`#5`#6`#7/{%
    \ifnextchar<{\Dtrianglepairpppp(#1)|#2|/#3`#4`#5`#6`#7/}%
    {\Dtrianglepairpppp(#1)|#2|/#3`#4`#5`#6`#7/<\default,\default>}}%

\def\pplace[#1](#2,#3)[#4]{\POS(#2,#3)*+!!<0ex,\axis>!#1{#4}\ignorespaces}%
\def\cplace(#1,#2)[#3]{\POS(#1,#2)*+!!<0ex,\axis>{#3}\ignorespaces}%
\def\place{\ifnextchar[{\pplace}{\cplace}}%

\def\pullback#1]#2]{\square#1]\trident#2]\ignorespaces}%

\def\tridentppp|#1#2#3|/#4`#5`#6/<#7,#8>[#9]{%
\def\next[##1;##2`##3`##4]{\def\nodee{##1}\def\labele{##2}%
   \def\labelf{##3}\def\labelg{##4}}%
\next[#9]%
\advance \xpos by -\deltax%
\advance \xpos by -#7\advance \ypos by #8%
\advance\deltax by #7%
\morphism(\xpos,\ypos)|#1|/{#4}/<\deltax,-#8>[\nodee`\nodeb;\labele]%
\advance\deltax by -#7%
\morphism(\xpos,\ypos)|#2|/{#5}/<#7,-#8>[\nodee`\nodea;\labelf]%
\advance\deltay by #8%
\morphism(\xpos,\ypos)|#3|/{#6}/<#7,-\deltay>[\nodee`\nodec;\labelg]%
\ignorespaces}%

\def\trident{\ifnextchar|{\tridentp}{\tridentp|amb|}}%
\def\tridentp|#1|{\ifnextchar/{\tridentpp|#1|}{\tridentpp|#1|/{>}`{>}`{>}/}}%
\def\tridentpp|#1|/#2/{\ifnextchar<{\tridentppp|#1|/#2/}%
  {\tridentppp|#1|/#2/<500,500>}}%

\def\setmorphismwidth#1#2#3#4{%
 \setbox0=\hbox{$#1{\labelstyle#3#3}#2$}#4=\wd0%
 \divide #4 by 2 \divide #4 by \ul%
 \advance #4 by 350 \ratchet{#4}{500}}%

\def\setSquarewidth[#1`#2`#3`#4;#5`#6`#7`#8]{%
 \setmorphismwidth{#1}{#2}{#5}{\topw}%
 \setmorphismwidth{#3}{#4}{#8}{\botw}%
\ratchet{\topw}{\botw}}%

\def\Squarepppp(#1)|#2|/#3/<#4>[#5]{%
 \setSquarewidth[#5]%
 \squarepppp(#1)|#2|/#3/<\topw,#4>[#5]%
\ignorespaces}%

\def\Squarep(#1){\ifnextchar|{\Squarepp(#1)}{\Squarepp(#1)|alrb|}}%
\def\Squarepp(#1)|#2|{\ifnextchar/{\Squareppp(#1)|#2|}%
    {\Squareppp(#1)|#2|/>`>`>`>/}}%
\def\Squareppp(#1)|#2|/#3`#4`#5`#6/{%
    \ifnextchar<{\Squarepppp(#1)|#2|/#3`#4`#5`#6/}%
    {\Squarepppp(#1)|#2|/#3`#4`#5`#6/<\default>}}%

\def\hsquarespppp(#1,#2)|#3|/#4/<#5>[#6;#7]{%
\Xpos=#1\Ypos=#2%
\def\next|##1##2##3##4##5##6##7|{%
 \def\Xa{##1}\def\Xb{##2}\def\Xc{##3}\def\Xd{##4}%
 \def\Xe{##5}\def\Xf{##6}\def\Xg{##7}}%
\next|#3|%
\def\next<##1,##2,##3>{\deltaX=##1\deltaXprime=##2\deltaY=##3}%
\next<#5>%
\def\next[##1`##2`##3`##4`##5`##6]{%
 \def\Nodea{##1}\def\Nodeb{##2}\def\Nodec{##3}%
 \def\Noded{##4}\def\Nodee{##5}\def\Nodef{##6}}%
\next[#6]%
\def\next[##1`##2`##3`##4`##5`##6`##7]{%
 \def\Labela{##1}\def\Labelb{##2}\def\Labelc{##3}\def\Labeld{##4}%
 \def\Labele{##5}\def\Labelf{##6}\def\Labelg{##7}}%
\next[#7]%
\dohsquares/#4/}%

\def\dohsquares/#1`#2`#3`#4`#5`#6`#7/{%
\squarepppp(\Xpos,\Ypos)|\Xa\Xc\Xd\Xf|/#1`#3`#4`#6/<\deltaX,\deltaY>%
 [\Nodea`\Nodeb`\Noded`\Nodee;\Labela`\Labelc`\Labeld`\Labelf]%
 \advance \Xpos by \deltaX%
\squarepppp(\Xpos,\Ypos)|\Xb\Xd\Xe\Xg|/#2``#5`#7/<\deltaXprime,\deltaY>%
[\Nodeb`\Nodec`\Nodee`\Nodef;\Labelb``\Labele`\Labelg]%
\ignorespaces}%

\def\hsquaresp(#1){\ifnextchar|{\hsquarespp(#1)}{\hsquarespp%
(#1)|aalmrbb|}}%
\def\hsquarespp(#1)|#2|{\ifnextchar/{\hsquaresppp(#1)|#2|}%
    {\hsquaresppp(#1)|#2|/>`>`>`>`>`>`>/}}%
\def\hsquaresppp(#1)|#2|/#3/{%
    \ifnextchar<{\hsquarespppp(#1)|#2|/#3/}%
    {\hsquarespppp(#1)|#2|/#3/<\default,\default,\default>}}%

\def\hSquarespppp(#1,#2)|#3|/#4/<#5>[#6;#7]{%
\Xpos=#1\Ypos=#2%
\def\next|##1##2##3##4##5##6##7|{%
 \def\Xa{##1}\def\Xb{##2}\def\Xc{##3}\def\Xd{##4}%
 \def\Xe{##5}\def\Xf{##6}\def\Xg{##7}}%
\next|#3|%
\deltaY=#5%
\def\next[##1`##2`##3`##4`##5`##6]{%
 \def\Nodea{##1}\def\Nodeb{##2}\def\Nodec{##3}%
 \def\Noded{##4}\def\Nodee{##5}\def\Nodef{##6}}%
\next[#6]%
\def\next[##1`##2`##3`##4`##5`##6`##7]{%
 \def\Labela{##1}\def\Labelb{##2}\def\Labelc{##3}\def\Labeld{##4}%
 \def\Labele{##5}\def\Labelf{##6}\def\Labelg{##7}}%
\next[#7]%
\dohSquares/#4/}%

\def\dohSquares/#1`#2`#3`#4`#5`#6`#7/{%
\Squarepppp(\Xpos,\Ypos)|\Xa\Xc\Xd\Xf|/#1`#3`#4`#6/<\deltaY>%
 [\Nodea`\Nodeb`\Noded`\Nodee;\Labela`\Labelc`\Labeld`\Labelf]%
 \advance \Xpos by \topw%
\Squarepppp(\Xpos,\Ypos)|\Xb\Xd\Xe\Xg|/#2``#5`#7/<\deltaY>%
[\Nodeb`\Nodec`\Nodee`\Nodef;\Labelb``\Labele`\Labelg]%
\ignorespaces}%

\def\hSquaresp(#1){\ifnextchar|{\hSquarespp(#1)}{\hSquarespp%
(#1)|aalmrbb|}}%
\def\hSquarespp(#1)|#2|{\ifnextchar/{\hSquaresppp(#1)|#2|}%
    {\hSquaresppp(#1)|#2|/>`>`>`>`>`>`>/}}%
\def\hSquaresppp(#1)|#2|/#3/{%
    \ifnextchar<{\hSquarespppp(#1)|#2|/#3/}%
    {\hSquarespppp(#1)|#2|/#3/<\default>}}%

\def\vsquarespppp(#1,#2)|#3|/#4/<#5>[#6;#7]{%
\Xpos=#1\Ypos=#2%
\def\next|##1##2##3##4##5##6##7|{%
 \def\Xa{##1}\def\Xb{##2}\def\Xc{##3}\def\Xd{##4}%
 \def\Xe{##5}\def\Xf{##6}\def\Xg{##7}}%
\next|#3|%
\def\next<##1,##2,##3>{\deltaX=##1\deltaY=##2\deltaYprime=##3}%
\next<#5>%
\def\next[##1`##2`##3`##4`##5`##6]{%
 \def\Nodea{##1}\def\Nodeb{##2}\def\Nodec{##3}%
 \def\Noded{##4}\def\Nodee{##5}\def\Nodef{##6}}%
\next[#6]%
\def\next[##1`##2`##3`##4`##5`##6`##7]{%
 \def\Labela{##1}\def\Labelb{##2}\def\Labelc{##3}\def\Labeld{##4}%
 \def\Labele{##5}\def\Labelf{##6}\def\Labelg{##7}}%
\next[#7]%
\dovsquares/#4/}%

\def\dovsquares/#1`#2`#3`#4`#5`#6`#7/{%
\squarepppp(\Xpos,\Ypos)|\Xd\Xe\Xf\Xg|/`#5`#6`#7/<\deltaX,\deltaYprime>%
[\Nodec`\Noded`\Nodee`\Nodef;`\Labele`\Labelf`\Labelg]%
 \advance\Ypos by \deltaYprime%
\squarepppp(\Xpos,\Ypos)|\Xa\Xb\Xc\Xd|/#1`#2`#3`#4/<\deltaX,\deltaY>%
 [\Nodea`\Nodeb`\Nodec`\Noded;\Labela`\Labelb`\Labelc`\Labeld]%
\ignorespaces}%

\def\vsquaresp(#1){\ifnextchar|{\vsquarespp(#1)}{\vsquarespp%
(#1)|aalmrbb|}}%
\def\vsquarespp(#1)|#2|{\ifnextchar/{\vsquaresppp(#1)|#2|}%
    {\vsquaresppp(#1)|#2|/>`>`>`>`>`>`>/}}%
\def\vsquaresppp(#1)|#2|/#3/{%
    \ifnextchar<{\vsquarespppp(#1)|#2|/#3/}%
    {\vsquarespppp(#1)|#2|/#3/<\default,\default,\default>}}%

\def\vSquarespppp(#1,#2)|#3|/#4/<#5,#6>[#7;#8]{%
\Xpos=#1\Ypos=#2%
\def\next|##1##2##3##4##5##6##7|{%
 \def\Xa{##1}\def\Xb{##2}\def\Xc{##3}\def\Xd{##4}%
 \def\Xe{##5}\def\Xf{##6}\def\Xg{##7}}%
\next|#3|%
\deltaX=#5%
\deltaY=#6%
\def\next[##1`##2`##3`##4`##5`##6]{%
 \def\Nodea{##1}\def\Nodeb{##2}\def\Nodec{##3}%
 \def\Noded{##4}\def\Nodee{##5}\def\Nodef{##6}}%
\next[#7]%
\def\next[##1`##2`##3`##4`##5`##6`##7]{%
 \def\Labela{##1}\def\Labelb{##2}\def\Labelc{##3}\def\Labeld{##4}%
 \def\Labele{##5}\def\Labelf{##6}\def\Labelg{##7}}%
\next[#8]%
\dovSquares/#4/\ignorespaces}%

\def\dovSquares/#1`#2`#3`#4`#5`#6`#7/{%
\setmorphismwidth{\Nodea}{\Nodeb}{\Labela}{\topw}%
\setmorphismwidth{\Nodec}{\Noded}{\Labeld}{\botw}%
\ratchet{\topw}{\botw}%
\setmorphismwidth{\Nodee}{\Nodef}{\Labelg}{\botw}%
\ratchet{\topw}{\botw}%
\square(\Xpos,\Ypos)|\Xd\Xe\Xf\Xg|/`#5`#6`#7/<\topw,\deltaX>%
 [\Nodec`\Noded`\Nodee`\Nodef;`\Labele`\Labelf`\Labelg]%
\advance \Ypos by \deltaX%
\square(\Xpos,\Ypos)|\Xa\Xb\Xc\Xd|/#1`#2`#3`#4/<\topw,\deltaY>%
 [\Nodea`\Nodeb`\Nodec`\Noded;\Labela`\Labelb`\Labelc`\Labeld]%
}%

\def\vSquaresp(#1){\ifnextchar|{\vSquarespp(#1)}{\vSquarespp%
(#1)|alrmlrb|}}%
\def\vSquarespp(#1)|#2|{\ifnextchar/{\vSquaresppp(#1)|#2|}%
    {\vSquaresppp(#1)|#2|/>`>`>`>`>`>`>/}}%
\def\vSquaresppp(#1)|#2|/#3/{%
    \ifnextchar<{\vSquarespppp(#1)|#2|/#3/}%
    {\vSquarespppp(#1)|#2|/#3/<\default,\default>}}%

\def\osquarepppp(#1)|#2|/#3`#4`#5`#6/<#7>[#8]{\squarepppp%
 (#1)|#2|/#3`#4`#5`#6/<#7>[#8]%
 \let\Nodea\nodea\let\Nodeb\nodeb%
\let\Nodec\nodec\let\Noded\noded\Xpos=\xpos\Ypos=\ypos%
\deltaX=\deltax \deltaY=\deltay \isquare}%

\def\osquarep(#1){\ifnextchar|{\osquarepp(#1)}{\osquarepp(#1)|alrb|}}%
\def\osquarepp(#1)|#2|{\ifnextchar/{\osquareppp(#1)|#2|}%
    {\osquareppp(#1)|#2|/>`>`>`>/}}%
\def\osquareppp(#1)|#2|/#3`#4`#5`#6/{%
    \ifnextchar<{\osquarepppp(#1)|#2|/#3`#4`#5`#6/}%
    {\osquarepppp(#1)|#2|/#3`#4`#5`#6/<1500,1500>}}%

\def\isquarepppp(#1)|#2|/#3`#4`#5`#6/<#7>[#8]{%
 \squarepppp(#1)|#2|/#3`#4`#5`#6/<#7>[#8]%
\ifnextchar|{\cubep}{\cubep|mmmm|}}%
\def\cubep|#1|{\ifnextchar/{\cubepp|#1|}{\cubepp|#1|/>`>`>`>/}}%

\def\isquare{\ifnextchar({\isquarep}{\isquarep(\default,\default)}}%
\def\isquarep(#1){\ifnextchar|{\isquarepp(#1)}{\isquarepp(#1)|alrb|}}%
\def\isquarepp(#1)|#2|{\ifnextchar/{\isquareppp(#1)|#2|}%
    {\isquareppp(#1)|#2|/>`>`>`>/}}%
\def\isquareppp(#1)|#2|/#3`#4`#5`#6/{%
    \ifnextchar<{\isquarepppp(#1)|#2|/#3`#4`#5`#6/}%
    {\isquarepppp(#1)|#2|/#3`#4`#5`#6/<500,500>}}%

\def\cubepp|#1#2#3#4|/#5`#6`#7`#8/[#9]{%
\def\next[##1`##2`##3`##4]{\gdef\Labela{##1}%
\gdef\Labelb{##2}\gdef\Labelc{##3}\gdef\Labeld{##4}}\next[#9]%
\xend\xpos \yend\ypos%
\Xend\xend\advance\Xend by -\Xpos%
\Yend\yend\advance\Yend by -\Ypos%
\domorphism(\Xpos,\Ypos)|#2|/#6/<\Xend,\Yend>[\Nodeb`\nodeb;\Labelb]%
\advance\Xpos by-\deltaX%
\advance\xend by-\deltax%
\Xend\xend\advance\Xend by -\Xpos%
\domorphism(\Xpos,\Ypos)|#1|/#5/<\Xend,\Yend>[\Nodea`\nodea;\Labela]%
\advance\Ypos by-\deltaY%
\advance\yend by-\deltay%
\Yend\yend\advance\Yend by -\Ypos%
\domorphism(\Xpos,\Ypos)|#3|/#7/<\Xend,\Yend>[\Nodec`\nodec;\Labelc]%
\advance\Xpos by\deltaX%
\advance\xend by\deltax%
\Xend\xend\advance\Xend by -\Xpos%
\domorphism(\Xpos,\Ypos)|#4|/#8/<\Xend,\Yend>[\Noded`\noded;\Labeld]%
\ignorespaces}%

\def\setwdth#1#2{\setbox0\hbox{$\labelstyle#1$}\wdth=\wd0%
\setbox0\hbox{$\labelstyle#2$}\ifnum\wdth<\wd0 \wdth=\wd0 \fi}%

\def\topppp/#1/<#2>^#3_#4{\:%
\ifnum#2=0%
   \setwdth{#3}{#4}\deltax=\wdth \divide \deltax by \ul%
   \advance \deltax by \defaultmargin  \ratchet{\deltax}{100}%
\else \deltax #2%
\fi%
\xy\ar@{#1}^{#3}_{#4}(\deltax,0) \endxy%
\:}%

\def\toppp/#1/<#2>^#3{\ifnextchar_{\topppp/#1/<#2>^{#3}}{\topppp/#1/<#2>^{#3}_{}}}%
\def\topp/#1/<#2>{\ifnextchar^{\toppp/#1/<#2>}{\toppp/#1/<#2>^{}}}%
\def\toop/#1/{\ifnextchar<{\topp/#1/}{\topp/#1/<0>}}%
\def\to{\ifnextchar/{\toop}{\toop/>/}}%

\def\rlimto{{%
\font\xyatipfont=xyatip10 scaled 800
\font\xybtipfont=xybtip10 scaled 800
\raise 2pt\hbox{\,\xy\ar@{->}(100,0) \endxy}\,}}
\def\llimto{{%
\font\xyatipfont=xyatip10 scaled 800
\font\xybtipfont=xybtip10 scaled 800
\raise 2pt\hbox{\,\xy\ar@{<-}(100,0) \endxy}\,}}

\def\twopppp/#1`#2/<#3>^#4_#5{\:%
\ifnum0=#3%
  \setwdth{#4}{#5}\deltax=\wdth \divide \deltax by \ul \advance \deltax%
  by \defaultmargin \ratchet{\deltax}{200}%
\else \deltax#3 \fi%
\xy\ar@{#1}@<2.5pt>^{#4}(\deltax,0)%
\ar@{#2}@<-2.5pt>_{#5}(\deltax,0)\endxy\:}%

\def\twoppp/#1`#2/<#3>^#4{\ifnextchar_{\twopppp/#1`#2/<#3>^{#4}}%
  {\twopppp/#1`#2/<#3>^{#4}_{}}}%
\def\twopp/#1`#2/<#3>{\ifnextchar^{\twoppp/#1`#2/<#3>}{\twoppp/#1`#2/<#3>^{}}}%
\def\twop/#1`#2/{\ifnextchar<{\twopp/#1`#2/}{\twopp/#1`#2/<0>}}%
\def\two{\ifnextchar/{\twop}{\twop/>`>/}}%

\def\threeppppp/#1`#2`#3/<#4>^#5|#6_#7{\:%
\ifnum0=#4%
\setbox0\hbox{$\labelstyle#5$}\wdth=\wd0%
\setbox0\hbox{$\labelstyle#6$}\ifnum\wdth<\wd0 \wdth=\wd0 \fi%
\setbox0\hbox{$\labelstyle#7$}\ifnum\wdth<\wd0 \wdth=\wd0 \fi%
\deltax=\wdth \divide \deltax by \ul \advance \deltax by%
\defaultmargin \ratchet{\deltax}{300}%
\else\deltax#4 \fi%
    \xy \ifnum\wd0=0 \ar@{#2}(\deltax,0)%
    \else \ar@{#2}|{#6}(\deltax,0)\fi%
\ar@{#1}@<4.5pt>^{#5}(\deltax,0)%
\ar@{#3}@<-4.5pt>_{#7}(\deltax,0)\endxy\:}%

\def\threepppp/#1`#2`#3/<#4>^#5|#6{\ifnextchar_{\threeppppp%
  /#1`#2`#3/<#4>^{#5}|{#6}}{\threeppppp/#1`#2`#3/<#4>^{#5}|{#6}_{}}}%
\def\threeppp/#1`#2`#3/<#4>^#5{\ifnextchar|{\threepppp%
  /#1`#2`#3/<#4>^{#5}}{\threepppp/#1`#2`#3/<#4>^{#5}|{}}}%
\def\threepp/#1`#2`#3/<#4>{\ifnextchar^{\threeppp/#1`#2`#3/<#4>}%
  {\threeppp/#1`#2`#3/<#4>^{}}}%
\def\threep/#1`#2`#3/{\ifnextchar<{\threepp/#1`#2`#3/}%
  {\threepp/#1`#2`#3/<0>}}%

\def\twoar(#1,#2){{%
 \scalefactor{0.1}%
 \deltax#1\deltay#2%
 \deltaX=\ifnum\deltax<0-\fi\deltax%
 \deltaY=\ifnum\deltay<0-\fi\deltay%
 \Xend\deltax \multiply \Xend by \deltax%
 \Yend\deltay \multiply \Yend by \deltay%
 \advance\Xend by \Yend \multiply \Xend by 3%
 \ifnum \deltaX > \deltaY%
    \multiply \deltaX by 3 \advance \deltaX by \deltaY%
 \else%
    \multiply \deltaY by 3 \advance \deltaX by \deltaY%
 \fi%
 \multiply\deltax by 500%
 \multiply\deltay by 500%
 \xpos\deltax \multiply \xpos by 3 \divide\xpos by \deltaX%
 \Xpos\deltax \multiply \Xpos by \deltaX \divide \Xpos by \Xend%
 \advance \xpos by \Xpos%
 \ypos\deltay \multiply \ypos by 3 \divide\ypos by \deltaX%
 \Ypos\deltay \multiply \Ypos by \deltaX \divide \Ypos by \Xend%
 \advance \ypos by \Ypos%
 \xy \ar@{=>}(\xpos,\ypos) \endxy%
}\ignorespaces}%

\def\iiixiiipppppp(#1,#2)|#3|/#4/<#5>#6<#7>[#8;#9]{%
 \xpos#1\ypos#2\relax%
 \def\next|##1##2##3##4##5##6##7|{\def\xa{##1}\def\xb{##2}%
 \def\xc{##3}\def\xd{##4}\def\xe{##5}\def\xf{##6}\nextt|##7|}%
 \def\nextt|##1##2##3##4##5##6|{\def\xg{##1}\def\xh{##2}%
 \def\xi{##3}\def\xj{##4}\def\xk{##5}\def\xl{##6}}%
 \next|#3|%
 \def\next<##1,##2>{\deltax##1\deltay##2}%
 \next<#5>%
 \def\next<##1,##2>{\deltaX##1\deltaY##2}%
 \next<#7>%
 \def\next##1{\topw##1\relax%
 \ifodd\topw \def\za{}\else\def\za{\relax}\fi \divide\topw by 2
 \ifodd\topw \def\zb{}\else\def\zb{\relax}\fi \divide\topw by 2
 \ifodd\topw \def\zc{}\else\def\zc{\relax}\fi \divide\topw by 2
 \ifodd\topw \def\zd{}\else\def\zd{\relax}\fi \divide\topw by 2
 \ifodd\topw \def\ze{}\else\def\ze{\relax}\fi \divide\topw by 2
 \ifodd\topw \def\zf{}\else\def\zf{\relax}\fi \divide\topw by 2
 \ifodd\topw \def\zg{}\else\def\zg{\relax}\fi \divide\topw by 2
 \ifodd\topw \def\zh{}\else\def\zh{\relax}\fi \divide\topw by 2
 \ifodd\topw \def\zi{}\else\def\zi{\relax}\fi \divide\topw by 2
 \ifodd\topw \def\zj{}\else\def\zj{\relax}\fi \divide\topw by 2
 \ifodd\topw \def\zk{}\else\def\zk{\relax}\fi \divide\topw by 2
 \ifodd\topw \def\zl{}\else\def\zl{\relax}\fi}%
  % \ifodd\topw \def\zl{}\else\def\zl{\relax}\fi \divide\topw by 2
  % \ifodd\topw \def\zk{}\else\def\zk{\relax}\fi \divide\topw by 2
  % \ifodd\topw \def\zj{}\else\def\zj{\relax}\fi \divide\topw by 2
  % \ifodd\topw \def\zi{}\else\def\zi{\relax}\fi \divide\topw by 2
  % \ifodd\topw \def\zh{}\else\def\zh{\relax}\fi \divide\topw by 2
  % \ifodd\topw \def\zg{}\else\def\zg{\relax}\fi \divide\topw by 2
  % \ifodd\topw \def\zf{}\else\def\zf{\relax}\fi \divide\topw by 2
  % \ifodd\topw \def\ze{}\else\def\ze{\relax}\fi \divide\topw by 2
  % \ifodd\topw \def\zd{}\else\def\zd{\relax}\fi \divide\topw by 2
  % \ifodd\topw \def\zc{}\else\def\zc{\relax}\fi \divide\topw by 2
  % \ifodd\topw \def\zb{}\else\def\zb{\relax}\fi \divide\topw by 2
  % \ifodd\topw \def\za{}\else\def\za{\relax}\fi}%
 \next{#6}%
 \def\next[##1`##2`##3`##4`##5`##6`##7`##8`##9]{%
 \def\nodeA{##1}\def\nodeB{##2}\def\nodeC{##3}%
 \def\nodeD{##4}\def\nodeE{##5}\def\nodeF{##6}%
 \def\nodeG{##7}\def\nodeH{##8}\def\nodeI{##9}}%
 \next[#8]%
 \def\next[##1`##2`##3`##4`##5`##6`##7]{%
 \def\labela{##1}\def\labelb{##2}\def\labelc{##3}%
 \def\labeld{##4}\def\labele{##5}\def\labelf{##6}\nextt[##7]}%
 \def\nextt[##1`##2`##3`##4`##5`##6]{%
 \def\labelg{##1}\def\labelh{##2}\def\labeli{##3}%
 \def\labelj{##4}\def\labelk{##5}\def\labell{##6}}%
 \next[#9]%
 \def\next/##1`##2`##3`##4`##5`##6`##7`##8/{%
 \advance\ypos\deltay
    \ifx\zf\empty \morphism(\xpos,\ypos)/<-/<-\deltaX,0>[\nodeD`0;]\fi
 \morphism(\xpos,\ypos)|\xf|/{##6}/<\deltax,0>[\nodeD`\nodeE;\labelf]%
    \advance \xpos\deltax
    \morphism(\xpos,\ypos)|\xg|/{##7}/<\deltax,0>[\nodeE`\nodeF;\labelg]%
    \ifx\zg\empty \advance\xpos \deltax
        \morphism(\xpos,\ypos)<\deltaX,0>[\nodeF`0;]\fi
    \xpos#1 \advance\ypos\deltay
    \ifx\zd\empty \morphism(\xpos,\ypos)/<-/<-\deltaX,0>[\nodeA`0;]\fi
    \ifx\za\empty \morphism(\xpos,\ypos)/<-/<0,\deltaY>[\nodeA`0;]\fi
    \morphism(\xpos,\ypos)|\xa|/{##1}/<\deltax,0>[\nodeA`\nodeB;\labela]%
 \morphism(\xpos,\ypos)|\xc|/{##3}/<0,-\deltay>[\nodeA`\nodeD;\labelc]%
    \advance \xpos\deltax
     \morphism(\xpos,\ypos)|\xb|/{##2}/<\deltax,0>[\nodeB`\nodeC;\labelb]%
     \morphism(\xpos,\ypos)|\xd|/{##4}/<0,-\deltay>[\nodeB`\nodeE;\labeld]%
     \ifx\zb\empty \morphism(\xpos,\ypos)/<-/<0,\deltaY>[\nodeB`0;]\fi
     \advance\xpos\deltax
 \morphism(\xpos,\ypos)|\xe|/{##5}/<0,-\deltay>[\nodeC`\nodeF;\labele]%
     \ifx\zc\empty \morphism(\xpos,\ypos)/<-/<0,\deltaY>[\nodeC`0;]\fi
     \ifx\ze\empty \morphism(\xpos,\ypos)<\deltaX,0>[\nodeC`0;]\fi
   \nextt/##8/}%
 \def\nextt/##1`##2`##3`##4`##5/{%
 \xpos#1\ypos#2\relax%
   \ifx\zh\empty \morphism(\xpos,\ypos)/<-/<-\deltaX,0>[\nodeG`0;]\fi
   \ifx\zj\empty \morphism(\xpos,\ypos)<0,-\deltaY>[\nodeG`0;]\fi
   \morphism(\xpos,\ypos)|\xk|/{##4}/<\deltax,0>[\nodeG`\nodeH;\labelk]%
   \advance\xpos\deltax
   \morphism(\xpos,\ypos)|\xl|/{##5}/<\deltax,0>[\nodeH`\nodeI;\labell]%
   \ifx\zk\empty \morphism(\xpos,\ypos)<0,-\deltaY>[\nodeH`0;]\fi
   \advance\xpos\deltax
   \ifx\zi\empty \morphism(\xpos,\ypos)<\deltaX,0>[\nodeI`0;]\fi
   \ifx\zl\empty \morphism(\xpos,\ypos)<0,-\deltaY>[\nodeI`0;]\fi
   \xpos#1 \advance\ypos\deltay
    \morphism(\xpos,\ypos)|\xh|/{##1}/<0,-\deltay>[\nodeD`\nodeG;\labelh]%
    \advance \xpos\deltax
    \morphism(\xpos,\ypos)|\xi|/{##2}/<0,-\deltay>[\nodeE`\nodeH;\labeli]%
    \advance \xpos\deltax
 \morphism(\xpos,\ypos)|\xj|/{##3}/<0,-\deltay>[\nodeF`\nodeI;\labelj]}%
 \next/#4/\ignorespaces}%

\def\iiixiiip(#1){\ifnextchar|{\iiixiiipp(#1)}%
  {\iiixiiipp(#1)|aalmrmmlmrbb|}}%
\def\iiixiiipp(#1)|#2|{\ifnextchar/{\iiixiiippp(#1)|#2|}%
    {\iiixiiippp(#1)|#2|/>`>`>`>`>`>`>`>`>`>`>`>/}}%
\def\iiixiiippp(#1)|#2|/#3/{%
    \ifnextchar<{\iiixiiipppp(#1)|#2|/#3/}%
    {\iiixiiipppp(#1)|#2|/#3/<\default,\default>}}%
\def\iiixiiipppp(#1)|#2|/#3/<#4>{\ifnextchar[{\iiixiiippppp(#1)|#2|/#3/%
   <#4>0<0,0>}{\iiixiiippppp(#1)|#2|/#3/<#4>}}%
\def\iiixiiippppp(#1)|#2|/#3/<#4>#5{\ifnextchar<%
   {\iiixiiipppppp(#1)|#2|/#3/<#4>{#5}}%
   {\iiixiiipppppp(#1)|#2|/#3/<#4>{#5}<400,400>}}%

\def\iiixiipppppp(#1,#2)|#3|/#4/<#5>#6<#7>[#8;#9]{%
 \xpos#1\ypos#2\relax%
 \def\next|##1##2##3##4##5##6##7|{\def\xa{##1}\def\xb{##2}%
 \def\xc{##3}\def\xd{##4}\def\xe{##5}\def\xf{##6}\def\xg{##7}}%
 \next|#3|%
 \def\next<##1,##2>{\deltax##1\deltay##2}%
 \next<#5>%
 \deltaX#7
 \topw#6
 \def\next{%
 \ifodd\topw \def\za{}\else\def\za{\relax}\fi \divide\topw by 2
 \ifodd\topw \def\zb{}\else\def\zb{\relax}\fi \divide\topw by 2
 \ifodd\topw \def\zc{}\else\def\zc{\relax}\fi \divide\topw by 2
 \ifodd\topw \def\zd{}\else\def\zd{\relax}\fi}%
 \next%
 \def\next[##1`##2`##3`##4`##5`##6]{%
 \def\nodea{##1}\def\nodeb{##2}\def\nodec{##3}%
 \def\noded{##4}\def\nodee{##5}\def\nodef{##6}}%
 \next[#8]%
 \def\next[##1`##2`##3`##4`##5`##6`##7]{%
 \def\labela{##1}\def\labelb{##2}\def\labelc{##3}%
 \def\labeld{##4}\def\labele{##5}\def\labelf{##6}\def\labelg{##7}}%
 \next[#9]%
 \def\next/##1`##2`##3`##4`##5`##6`##7/{%
 {\ifx\zc\empty\advance\xpos -\deltaX
\relax\morphism(\xpos,\ypos)<\deltaX,0>[0`\noded;]\fi}%
 \morphism(\xpos,\ypos)|\xf|/##6/<\deltax,0>[\noded`\nodee;\labelf]%
 \advance\xpos by \deltax%
 \morphism(\xpos,\ypos)|\xg|/##7/<\deltax,0>[\nodee`\nodef;\labelg]%
 {\ifx\zd\empty \advance\xpos by \deltax
\relax  \morphism(\xpos,\ypos)<\deltaX,0>[\nodef`0;]\fi}%
 \advance\xpos by -\deltax  \advance\ypos by \deltay
 {\ifx\za\empty\advance \xpos by -\deltaX
\relax\morphism(\xpos,\ypos)<\deltaX,0>[0`\nodea;]\fi}%
 \morphism(\xpos,\ypos)|\xa|/##1/<\deltax,0>[\nodea`\nodeb;\labela]%
 \morphism(\xpos,\ypos)|\xc|/##3/<0,-\deltay>[\nodea`\noded;\labelc]%
 \advance\xpos by \deltax%
 \morphism(\xpos,\ypos)|\xb|/##2/<\deltax,0>[\nodeb`\nodec;\labelb]%
 \morphism(\xpos,\ypos)|\xd|/##4/<0,-\deltay>[\nodeb`\nodee;\labeld]%
 \advance\xpos by \deltax%
 \morphism(\xpos,\ypos)|\xe|/##5/<0,-\deltay>[\nodec`\nodef;\labele]%
 \ifx\zb\empty\relax \morphism(\xpos,\ypos)<\deltaX,0>[\nodec`0;]\fi}%
 \next/#4/\ignorespaces}%

\def\iiixiip(#1){\ifnextchar|{\iiixiipp(#1)}%
  {\iiixiipp(#1)|aalmrbb|}}%
\def\iiixiipp(#1)|#2|{\ifnextchar/{\iiixiippp(#1)|#2|}%
    {\iiixiippp(#1)|#2|/>`>`>`>`>`>`>/}}%
\def\iiixiippp(#1)|#2|/#3/{%
    \ifnextchar<{\iiixiipppp(#1)|#2|/#3/}%
    {\iiixiipppp(#1)|#2|/#3/<\default,\default>}}%
\def\iiixiipppp(#1)|#2|/#3/<#4>{\ifnextchar[{\iiixiippppp(#1)|#2|/#3/%
   <#4>{0}<0>}{\iiixiippppp(#1)|#2|/#3/<#4>}}%
\def\iiixiippppp(#1)|#2|/#3/<#4>#5{\ifnextchar<%
   {\iiixiipppppp(#1)|#2|/#3/<#4>{#5}}%
   {\iiixiipppppp(#1)|#2|/#3/<#4>{#5}<400>}}%

\def\node#1(#2,#3)[#4]{%
\expandafter\gdef\csname x@#1\endcsname{#2}%
\expandafter\gdef\csname y@#1\endcsname{#3}%
\expandafter\gdef\csname ob@#1\endcsname{#4}%
\ignorespaces}%

\newcount\xfinish%
\newcount\yfinish%
\def\arrowp|#1|{\ifnextchar/{\arrowpp|#1|}{\arrowpp|#1|/>/}}%
\def\arrowpp|#1|/#2/[#3`#4;#5]{%
\xfinish=\csname x@#4\endcsname%
\yfinish=\csname y@#4\endcsname%
\advance\xfinish by -\csname x@#3\endcsname%
\advance\yfinish by -\csname y@#3\endcsname%
\morphism(\csname x@#3\endcsname,\csname y@#3\endcsname)|#1|/{#2}/%
<\xfinish,\yfinish>[\csname ob@#3\endcsname`\csname ob@#4\endcsname;#5]%
}%

\def\Loop(#1,#2)#3(#4,#5){\POS(#1,#2)*+!!<0ex,\axis>{#3}\ar@(#4,#5)}%
\def\iloop#1(#2,#3){\xy\Loop(0,0)#1(#2,#3)\endxy}%

\catcode`\@=\atcode%
\entrymodifiers={+!!<0pt,\fontdimen22\textfont2>}%

   the \xybox  does not allow control over *where*,%
inside the <object> that it builds,%
the reference point is to be located.%

Accordingly, I've just devised a variant that builds%
the same kind of compound <object>, but also sets%
its reference-point to be at the <coord> of the%
last <POS> within the box; i.e., the <coord> for%
the <object> that has been built is at the current%
<POS> when the Xy-pic parsing has been completed.%
The LRUD extents are the size of the complete box;%
i.e., *not* the extents of the final <POS>.%

Here is coding that should go in your document's%
preamble -- eventually it should be added to  xy.tex%

---------  start of new Xy-pic definitions  -------%

> \makeatletter   % adjust the \catcode of @%

 this is a better definition for the new  \xyobjbox%

   \xydef@\xyobjbox#1{\xy%
     \let \PATHafterPOS\PATHafterPOS@default%
     \let \arsavedPATHafterPOS@@\relax%
     \let\afterar@@\relax%
     \POS#1\endxyobj\Edge@c={\rectangleEdge}\computeLeftUpness@}%

> \xydef@\endxyobj{\if\inxy@\else\xyerror@{Unexpected \string\endxy}{}\fi%
>  \relax%
>   \dimen@=\Y@max \advance\dimen@-\Y@min%
>   \ifdim\dimen@<\z@ \dimen@=\z@ \Y@min=\z@ \Y@max=\z@ \fi%
>   \dimen@=\X@max \advance\dimen@-\X@min%
>   \ifdim\dimen@<\z@ \dimen@=\z@ \X@min=\z@ \X@max=\z@ \fi%
>   \edef\tmp@{\egroup%
>     \setboxz@h{\kern-\the\X@min \boxz@}%
>     \ht\z@=\the\Y@max \dp\z@=-\the\Y@min \wdz@=\the\dimen@%
>     \noexpand\maybeunraise@ \raise\dimen@\boxz@%
>     \noexpand\recoverXyStyle@ \egroup \noexpand\xy@end%
>     \U@c=\the\Y@max \advance\U@c-\the\Y@c%
>     \D@c=-\the\Y@min \advance\D@c\the\Y@c%
>     \L@c=-\the\X@min  \advance\L@c\the\X@c%
>     \R@c=\the\X@max  \advance\R@c-\the\X@c%
>    }\tmp@}%
>%
> \makeatother   % revert \catcode of @%
>%
> ---------  end of new Xy-pic definitions  -------%
\makeatletter%
\gdef\xymerge@MinMax{}%
\xydef@\twocell{\hbox\bgroup\xysave@MinMax\@twocell}%
\xydef@\uppertwocell{\hbox\bgroup\xysave@MinMax\@uppertwocell}%
\xydef@\lowertwocell{\hbox\bgroup\xysave@MinMax\@lowertwocell}%
\xydef@\compositemap{\hbox\bgroup\xysave@MinMax\@compositemap}%
\xydef@\twocelll#1#{\hbox\bgroup\xysave@MinMax\xy@\save\save@\@twocelll{%
#1}}%

\xydef@\xysave@MinMax{\xdef\xymerge@MinMax{%
   \noexpand\ifdim\X@max<\the\X@max \X@max=\the\X@max\noexpand\fi%
   \noexpand\ifdim\X@min>\the\X@min \X@min=\the\X@min\noexpand\fi%
   \noexpand\ifdim\Y@max<\the\Y@max \Y@max=\the\Y@max\noexpand\fi%
   \noexpand\ifdim\Y@min>\the\Y@min \Y@min=\the\Y@min\noexpand\fi%
  }}%
\xydef@\drop@Twocell{\boxz@ \xymerge@MinMax}%

\xydef@\twocell@DONE{%
  \edef\tmp@{\egroup%
   \X@min=\the\X@min \X@max=\the\X@max%
   \Y@min=\the\Y@min \Y@max=\the\Y@max}\tmp@%
  \L@c=\X@c \advance\L@c-\X@min \R@c=\X@max \advance\R@c-\X@c%
  \D@c=\Y@c \advance\D@c-\Y@min \U@c=\Y@max \advance\U@c-\Y@c%
  \ht\z@=\U@c \dp\z@=\D@c \dimen@=\L@c \advance\dimen@\R@c \wdz@=\dimen@%
  \computeLeftUpness@%
  \setboxz@h{\kern-\X@p \raise-\Y@c\boxz@ }%
  \dimen@=\L@c \advance\dimen@\R@c \wdz@=\dimen@ \ht\z@=\U@c \dp\z@=\D@c%
  \Edge@c={\rectangleEdge}\Invisible@false \Hidden@false%
  \edef\Drop@@{\noexpand\drop@Twocell%
   \noexpand\def\noexpand\Leftness@{\Leftness@}%
   \noexpand\def\noexpand\Upness@{\Upness@}}%
  \edef\Connect@@{\noexpand\connect@Twocell%
   \noexpand\ifdim\X@max<\the\X@max \X@max=\the\X@max\noexpand\fi%
   \noexpand\ifdim\X@min>\the\X@min \X@min=\the\X@min\noexpand\fi%
   \noexpand\ifdim\Y@max<\the\Y@max \Y@max=\the\Y@max\noexpand\fi%
   \noexpand\ifdim\Y@min>\the\Y@min \Y@min=\the\Y@min\noexpand\fi }%
  \xymerge@MinMax%
}%
\makeatother%

\makeatletter

% Pushout and Pullback ``corners'' for xypic diagrams.

\newcommand{\pocorner}{\hbox to 10pt{{\vrule height10pt depth0pt width0.5pt}%
    \vbox to 10pt{{\hrule height0.5pt width9.5pt depth0pt}\vfill}}}
\newcommand{\@poexcursion}[1]{\save[]-<0pt+#1,-12pt>*{\pocorner}\restore}
\newcommand{\pbcorner}{\vbox to 0pt{\kern 5pt\hbox to 0pt{\kern 5pt%
      \vbox{{\hrule height0.5pt width9.5pt depth0pt}}%
      {\vrule height10pt depth0pt width0.5pt}\hss}\vss}}
\def\@pbexcursion[#1]{\save[]+DR-<16pt,-8pt>+<#1,0pt>*{\pbcorner}\restore}
\newcommand{\pbexcursion}{\@ifnextchar[\@pbexcursion{\@pbexcursion[0pt]}}

% Inline arrow macros.

\def\arr@fn{\futurelet\arr@next}
\def\arr@dn{\def\arr@next}
% \space@ and \eat@ are useful macros from xypic too

\newtoks\arr@toks
\def\addtoarr@toks#1{\arr@toks=\expandafter{\the\arr@toks#1}}

\def\arr@upperlab{}
\def\arr@lowerlab{}
\def\arr@mods{}
\def\arr@uppermods{}
\def\arr@lowermods{}

\newbox\arr@box
\newdimen\arr@dimen
\newdimen\arr@spacer
\newif\ifarr@fixeddim

\arr@spacer=7mm

\def\arr@updatedimen#1{%
  \ifarr@fixeddim\else\setbox\arr@box=\hbox{$\m@th\scriptstyle #1$}%
  \ifdim\wd\arr@box>\arr@dimen \arr@dimen=\wd\arr@box\fi\fi}

\def\parsearr@@{%
    \ifx\space@\arr@next \expandafter\arr@dn\space{\arr@fn\parsearr@@}%
    \else\ifx ^\arr@next \arr@dn ^##1{\def\arr@upperlab{^{##1}}\arr@fn\parsearr@@}%
    \else\ifx _\arr@next \arr@dn _##1{\def\arr@lowerlab{_{##1}}\arr@fn\parsearr@@}%
    \else\ifx <\arr@next \arr@dn <##1>{\arr@fixeddimtrue\arr@dimen=##1%
      \arr@fn\parsearr@@}
    \else\addAT@\ifx\arr@next \addAT@\arr@dn[##1]{\def\arr@mods{##1}\arr@fn\parsearr@@}%
    \else\ifarr@fixeddim\else\arr@updatedimen{\arr@upperlab}%
        \arr@updatedimen{\arr@lowerlab}\advance\arr@dimen by \arr@spacer\fi%
      \arr@toks={\xy<0pt,0pt>*+{}\ar}%
      \expandafter\addtoarr@toks\expandafter{\arr@mods}%
      \expandafter\addtoarr@toks\expandafter{\arr@upperlab}%
      \expandafter\addtoarr@toks\expandafter{\arr@lowerlab}%
      \addtoarr@toks{<\arr@dimen,0pt>*+{}\endxy}%
      \arr@dn{\the\arr@toks}%
    \fi\fi\fi\fi\fi\arr@next}

\newcommand{\arrow}{%
  \arr@fixeddimfalse\arr@dimen=0pt\def\arr@upperlab{}\def\arr@lowerlab{}%
  \def\arr@mods{}\arr@fn\parsearr@@}

\def\parsearrpair@@{%
    \ifx\space@\arr@next \expandafter\arr@dn\space{\arr@fn\parsearrpair@@}%
    \else\ifx ^\arr@next \arr@dn ^##1{\def\arr@upperlab{^{##1}}\arr@fn\parsearrpair@@}%
    \else\ifx _\arr@next \arr@dn _##1{\def\arr@lowerlab{_{##1}}\arr@fn\parsearrpair@@}%
    \else\ifx <\arr@next \arr@dn <##1>{\arr@fixeddimtrue\arr@dimen=##1%
      \arr@fn\parsearrpair@@}
    \else\addAT@\ifx\arr@next \addAT@\arr@dn[{\arr@fn\parsearrpair@@@}%
    \else\ifarr@fixeddim\else\arr@updatedimen{\arr@upperlab}%
        \arr@updatedimen{\arr@lowerlab}\advance\arr@dimen by \arr@spacer\fi%
      \arr@toks={\xy<0pt,0pt>*+{}="a", <\arr@dimen,0pt>*+{}="b"}%
      \expandafter\addtoarr@toks\expandafter{\addAT@\ar<2pt>}%
      \expandafter\addtoarr@toks\expandafter{\arr@uppermods}%
      \addtoarr@toks{"a";"b"}%
      \expandafter\addtoarr@toks\expandafter{\arr@upperlab}%
      \expandafter\addtoarr@toks\expandafter{\addAT@\ar<-2pt>}%
      \expandafter\addtoarr@toks\expandafter{\arr@lowermods}%
      \addtoarr@toks{"a";"b"}%
      \expandafter\addtoarr@toks\expandafter{\arr@lowerlab}%
      \addtoarr@toks{\endxy}\arr@dn{\the\arr@toks}%
    \fi\fi\fi\fi\fi\arr@next}

\def\parsearrpair@@@{%
  \ifx u\arr@next \arr@dn u##1]{\def\arr@uppermods{##1}\arr@fn\parsearrpair@@}%
  \else\arr@dn l##1]{\def\arr@lowermods{##1}\arr@fn\parsearrpair@@}%
  \fi\arr@next}

\newcommand{\arrowpair}{%
  \arr@fixeddimfalse\arr@dimen=0pt\def\arr@upperlab{}\def\arr@lowerlab{}%
  \def\arr@uppermods{}\def\arr@lowermods{}\arr@fn\parsearrpair@@}

\makeatother

\newdir{ >}{{}*!/-10pt/@{>}}
\newdir{u(}{{}*!/-6pt/@^{(}}
\newdir{d(}{{}*!/-6pt/@_{(}}
\newdir{|>}{%
  !/4.5pt/@{|}*:(1,-.2)@^{>}*:(1,+.2)@_{>}*+@{}}

\newcommand{\epi}{\arrow@[@{->>}]}
\newcommand{\cover}{\arrow@[@{-|>}]}
\newcommand{\spanarr}{\arrow@[|-@{|}]}
\newcommand{\inc}{\arrow@[@{u(->}]}
\newcommand{\mapto}{\arrow@[@{|->}]}
\newcommand{\mono}{\arrow@[@{ >->}]}
\newcommand{\mat}{\arrow@[|-*{\object@{o}}]}
\newcommand{\twocell}{\arrow@[@{=>}]}

\newbox\linebox \setbox\linebox=\hbox{\xy \POS(0,40)\ar@{-} (0,0)\endxy}
\def\line{\copy\linebox}
\newbox\dlinebox \setbox\dlinebox=\hbox{\xy \POS(-32,32)\ar@{-} (0,0)\endxy}
\def\dline{\copy\dlinebox} 
  
\title{A Category of quantum categories}
\author{Dimitri Chikhladze}
\date{}

\addtolength{\hoffset}{-1.25cm}
\addtolength{\textwidth}{2.5cm}
  
\begin{document}
\theoremstyle{plain}
\newtheorem{theorem}{Theorem}
\newtheorem{proposition}[theorem]{Proposition} 
\newtheorem{lemma}[theorem]{Lemma} 
\newtheorem{statement}[theorem]{Statement}
\theoremstyle{definition}
\newtheorem{definition}[theorem]{Definition}
\theoremstyle{remark}
\newtheorem{example}[theorem]{Example} 
     
\maketitle
\begin{abstract}
Quantum categories were introduced in \cite{quantumcat} as generalizations of
both bi(co)algebroids and small categories. We clarify details of that work. In
particular, we show explicitly how the monadic definition of a quantum category
unpacks to a set of axioms close to the definitions of a bialgebroid in the Hopf
algebraic literature. We define notions of functor and natural transformation
for quantum categories.
\end{abstract}

\section{Introduction}  

Quantum categories are defined within a monoidal category satisfying
a modest condition. When the monoidal category is the opposite category of modules over a
commutative ring, a quantum category is the same as what has been called a
bialgebroid in the literature. Bialgebroids, under the name of
$\times_A$-bialgebras, appeared as early as M. Takeuchi's paper \cite{Takeuchi}.
Another push came later when, essentially the same notion, independently from Takeuchi's work and for
totally different reasons, was considered by J.-H. Lu \cite{Lu} and P. Xu
\cite{Xu}. A categorical approach was introduced in a paper by B. Day and R.
Street \cite{quantumcat}. There, a quantum category in a general monoidal
category was defined, so incorporating both bialgebroids, in the way mentioned
above, and ordinary categories, by taking the monoidal category to be the
category of sets.

In this paper, influenced by \cite{quantumcat}, we approach quantum
categories using the bicategorical version of the formal theory of (co)monads
\cite{FTM1}. We give a monadic definition of a quantum category and show
explicitly how it translates to a set of axioms close to \cite{Lu} and
\cite{Xu}. We define the notion of functor between quantum categories, obtaining a category $\mathrm{qCat}$. 

The paper is organised in the following way. First, in Section 2 we review the
formal theory of (co)monads in a bicategory. Then in Sections 3 and 4,  we deal
with a particular bicategory, which is defined from our base monoidal category.
Sections 5 and 6 are dedicated to quantum categories and related concepts. In
Section 7  applications are given, which show that some constructions
which otherwise would be complicated become simple when the categorical
approach is taken. In the Appendix we introduce framed string diagrams designed
to ease computations involving the quantum structures.

\section{Monoidal comonads}  

Let $\mathcal{B}$ be a bicategory. We write as if $\mathcal{B}$
were a 2-category, regarding associativity and unitivity isomorphisms as
identities. 
 
Recall that a comonad in $\mathcal{B}$ \cite{FTM1}, \cite{B} is a pair $(B, g)$,
where $B$ is an object of $\mathcal{B}$ and $g = (g, \delta : g \Rightarrow gg,
\epsilon : g \Rightarrow 1_g)$ is a comonoid in the homcategory $\mathcal{B}(B,
B)$. A map of comonads $(k, \kappa) : (B, g) \rightarrow (A, g')$ consists of a morphism $k : B \rightarrow
A$ and a 2-cell $\kappa : kg \Rightarrow g'k$ satisfying: 

$$\Big(kg \to^{k\delta} kgg \to^{\kappa g} g'kg \to^{g'\kappa} g'g'k\Big) \; = \;
\Big(kg \to^{\kappa} g'k \to^{g'\delta} g'g'k\Big)\text{,}$$

$$\Big(kg \to^{k\epsilon} k\Big) = \Big(kg \to^{\kappa} g'k \to^{\epsilon k}
k\Big)\text{.}$$

\noindent A comonad map transformation $\tau : (k, \kappa) \Rightarrow (k',
\kappa') : (A, g) \to (B, g')$ is a 2-cell $\tau : k \Rightarrow k'$ satisfying:

$$\Big(kg \to^{\tau g} k'g \to^{\kappa'} g'k'\Big) = \Big(kg \to^{\kappa} g'k
\to^{g\tau} g'k'\Big)\text{.}$$

\noindent Comonads in $\mathcal{B}$, comonad maps and comonad map
transformations form a bicategory $\mathrm{Comnd}\mathcal{B}$ under the obvious
composition.

$\mathcal{B}$ is said \cite{FTM1} to
admit the Eilenberg-Moore construction for comonads if the inclusion $\mathcal{B}
\rightarrow \mathrm{Comnd}\mathcal{B}$, taking an object $B$ to $(B, 1)$, has a
right biadjoint $\mathrm{Comnd}\mathcal{B} \rightarrow \mathcal{B}$. The value of
this right biadjoint at $(B, g)$ is called an
Eilenberg-Moore object of $(B, g)$. It will be denoted by denoted $B^g$. There
is a pseudonatural equivalence

$$\mathcal{B}(X, B^g) \simeq \mathrm{Comnd}\mathcal{B}((X, 1), (B, g))$$

\noindent The objects of the right side are called $g$-coalgebras. Taking $X =
B^g$ and evaluating at the identity, we obtain a universal $g$-coalgebra $(u, \gamma)
: (B^g, 1) \rightarrow (B, g)$. Every comonad map $k : (B, g) \rightarrow (A,
g')$ induces a map $\hat{k} : B^g \rightarrow A^{g'}$ between Eilenberg-Moore
objects so that there is an isomorphism:

\begin{equation}
\bfig
\square[B^g`A^{g'}`B`A;\hat{k}`u`u`k]
\place(250,250)[\cong]
\efig
\label{lift}
\end{equation} 

\noindent By an equivalence between suitable
categories, comonad structures on $k : B \rightarrow A$ correspond to diagrams
\eqref{lift} in $\mathcal{B}$.

Let $\mathcal{B}$ be a monoidal bicategory \cite{MBHA}.
We specify $n$-ary tensor product pseudofunctors

$$\mathcal{B}^n \to^{\otimes_n} \mathcal{B}$$

\noindent by choosing bracketing for the tensor
product to be from the left. So, the expression $B_1\otimes\ldots\otimes B_n$
refers to $\otimes_n(B_1\otimes\ldots\otimes B_n)$.

A monoidale $E$ in $\mathcal{B}$ consists of an object $E$ together
with morphisms $p : E\otimes E \rightarrow E$ and $j : I \rightarrow E$ called
the multiplication and the unit respectively, and invertible 2-cells expressing associativity and
unitivity, subject to coherence conditions. The $n$-ary multiplication map

$$E^n \to^{p_n} E\text{.}$$

\noindent is defined by consecutive multiplications from the left.

\begin{example}
Let an object $B$ be the right bidual to an object $A$ in $\mathcal{B}$,
with the biduality counit $e : A\otimes B \rightarrow I$ and the biduality
unit $n: I \rightarrow B\otimes A$. $B\otimes A$ becomes a monoidale with product $p =
1\otimes e \otimes 1 : B\otimes A\otimes B\otimes A \rightarrow B \otimes A$
and unit $j = n : I \rightarrow B\otimes A$.
\end{example}

A monoidal morphism $(f, \phi_2, \phi_0) : E \rightarrow D$ between monoidales
consists of a morphism $f : E \rightarrow D$ and 2-cells $\phi_2 : p(f\otimes f) \Rightarrow fp$, $\phi_0 : j
\Rightarrow fj$ satisfying three axioms. The composition of monoidal morphisms
$(f, \phi_2, \phi_0) : E \rightarrow D$ and $(f', \phi_2, \phi_0) : D \rightarrow F$ is defined to be $(f'f, \phi_2, \phi_0)
: E \rightarrow F$, where

$$\phi_2 = \Big(p(f'\otimes f')(f\otimes f) \to^{\phi_2(f\otimes f)}
f'p(f\otimes f) \to^{f'\phi_2} f'fp\Big)$$
$$\phi_0 = \Big(j \to^{\phi_0} f'j \to^{f'\phi_0} f'fj\Big)\text{.}$$

\noindent A monoidal morphism is called strong when $\phi_2$
and $\phi_0$ are isomorphisms. Monoidales in $\mathcal{B}$, monoidal morphisms
between them and obvious 2-cells form a bicategory $\mathrm{Mon}\mathcal{B}$.

There is a biequivalence

\begin{equation}
\mathrm{MonComnd}\mathcal{B} \sim \mathrm{ComndMon}\mathcal{B}\text{,}
\label{bieq}
\end{equation}

\noindent where the left hand side is defined using the monoidal structure on
$\mathrm{Comnd}\mathcal{B}$ inherited from $\mathcal{B}$.

A monoidal comonad is an object of $\mathrm{ComndMon}\mathcal{B}$, or
equally, an object of $\mathrm{MonComnd}\mathcal{B}$. Explicitely, a
monoidal comonad consists of a monoidale $E$, a comonad $g$ on
$E$ and 2-cells $\phi_2 : p(g\otimes g) \Rightarrow gp$, $\phi_0 : j \Rightarrow
gj$ such that $(g, \phi_2, \phi_0)$ is a monoidal morphism and
$(p, \phi_2) : (E\otimes E, g\otimes g) \rightarrow (E, g)$ and $(j, \phi_0) : (I, 1) \rightarrow (E, g)$ are
comonad maps. A morphism of monoidal comonads $(k, \kappa) : (E, g) \rightarrow
(E', g')$ is a map of underlying comonads such that $\kappa : kg \Rightarrow g'k$
is a map of monoidal morphisms.
 
$\mathrm{Mon}(-)$ can be made into a pseudofunctor from
the tricategory of monoidal bicategories and monoidal pseudofunctors to the
tricategory of bicategories and pseudofunctors. Since the inclusion $i :
\mathcal{B} \rightarrow \mathrm{Comnd}\mathcal{B}$ is a strong monoidal pseudofunctor the
right biadjoint to it is a monoidal pseudofunctor too. It follows that if $i$
has a right biadjoint, then $\mathrm{Mon}(i) : \mathrm{Mon}\mathcal{B} \rightarrow
\mathrm{MonComnd}\mathcal{B}$ has a right biadjoint too. Using the the
biequivalence \eqref{bieq} we infer that the canonical inclusion
$\mathrm{Mon}\mathcal{B} \rightarrow \mathrm{ComndMon}\mathcal{B}$ has a right
biadjoint. This proves \cite{IM}, \cite{PM}:

\begin{proposition}\label{monem}
If $\mathcal{B}$ admits the Eilenberg-Moore construction for comonads, then so
does $\mathrm{Mon}(\mathcal{B})$.

\end{proposition}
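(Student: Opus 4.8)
The plan is to unwind the statement into a claim about a single inclusion and then produce its right biadjoint by transporting the hypothesised one through the functor $\mathrm{Mon}(-)$. First I would note that, by the very definition of a comonad in a bicategory, a comonad in $\mathrm{Mon}\mathcal{B}$ is precisely a monoidale equipped with a monoidal comonad structure; in other words $\mathrm{Comnd}(\mathrm{Mon}\mathcal{B}) = \mathrm{ComndMon}\mathcal{B}$, essentially by the chosen notation. Hence the assertion that $\mathrm{Mon}\mathcal{B}$ admits the Eilenberg--Moore construction for comonads is exactly the assertion that the canonical inclusion $\mathrm{Mon}\mathcal{B} \to \mathrm{ComndMon}\mathcal{B}$, sending a monoidale $E$ to the trivial comonad $(E,1)$, has a right biadjoint. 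This is the object to be produced.

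Next I would exploit the hypothesis, namely that $i : \mathcal{B} \to \mathrm{Comnd}\mathcal{B}$ has a right biadjoint $R$. The key structural point is that $i$ is \emph{strong} monoidal: the monoidal structure on $\mathrm{Comnd}\mathcal{B}$ is inherited from $\mathcal{B}$, and $i$ carries $B$ to $(B,1)$, which is compatible with the tensor on the nose since $(B,1)\otimes(C,1)=(B\otimes C,1)$. By the tricategorical form of doctrinal adjunction (the monoidal-bicategory analogue of Kelly's result), the right biadjoint of a strong monoidal pseudofunctor acquires a canonical (lax) monoidal structure via the mates of the inverses of $i$'s comparison cells, and the unit and counit become monoidal; thus $i \dashv R$ upgrades to a biadjunction \emph{internal to monoidal bicategories}. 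It is at this stage that strongness of $i$ is indispensable, since it is the invertibility of its structure cells that makes the mate construction available.

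Applying the pseudofunctor $\mathrm{Mon}(-)$ to this monoidal biadjunction then yields a biadjunction $\mathrm{Mon}(i) \dashv \mathrm{Mon}(R)$, where $\mathrm{Mon}(i) : \mathrm{Mon}\mathcal{B} \to \mathrm{Mon}(\mathrm{Comnd}\mathcal{B}) = \mathrm{MonComnd}\mathcal{B}$. Finally I would compose with the biequivalence \eqref{bieq}: transporting $\mathrm{Mon}(R)$ across $\mathrm{MonComnd}\mathcal{B} \sim \mathrm{ComndMon}\mathcal{B}$, and checking that under this biequivalence $\mathrm{Mon}(i)$ corresponds, up to equivalence, to the canonical inclusion $\mathrm{Mon}\mathcal{B} \to \mathrm{ComndMon}\mathcal{B}$ identified above (both send $E$ to the trivial monoidal comonad on $E$), produces the desired right biadjoint.

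The main obstacle I anticipate is coherence-theoretic rather than conceptual. One must verify that $\mathrm{Mon}(-)$ is functorial at the level actually required, so that it preserves biadjunctions -- that is, sends the triangle equivalences relating the unit and counit, together with their coherence modifications, to the corresponding data for $\mathrm{Mon}(i) \dashv \mathrm{Mon}(R)$ -- and one must establish the tricategorical doctrinal adjunction endowing $R$ with its monoidal structure. Both steps involve manipulating invertible modifications against the pentagon and triangle coherence of monoidal bicategories, and care is needed to keep track of which comparison cells are genuinely invertible. Granting these coherence facts, the final assembly of the right biadjoint is formal.
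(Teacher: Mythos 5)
Your proposal is correct and follows essentially the same route as the paper: identify the Eilenberg--Moore property for $\mathrm{Mon}\mathcal{B}$ with a right biadjoint to the inclusion $\mathrm{Mon}\mathcal{B} \to \mathrm{ComndMon}\mathcal{B}$, use strong monoidality of $i$ (doctrinal adjunction) to make its right biadjoint monoidal, apply the pseudofunctor $\mathrm{Mon}(-)$ to obtain a right biadjoint to $\mathrm{Mon}(i)$, and transport along the biequivalence \eqref{bieq}. The only difference is expository: you spell out the coherence caveats that the paper leaves implicit.
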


Explicitly an Eilenberg-Moore object of a monoidal comonad $(E, g)$ is obtained
in the following way. Let $E^g$ be the Eilenberg-Moore object for the
underliying comonad in $\mathcal{B}$ with $(u, \gamma) : (E^g, 1) \rightarrow (E, g)$ the universal coalgebra. Then $p(u\otimes u) : E^g\otimes E^g
\rightarrow E$ becomes a $g$-coalgebra with coaction

$$p(u\otimes u) \to^{p(\gamma\otimes\gamma)} p(g\otimes g)(u\otimes u)
\to^{\phi_2p} gp(u\otimes u)\text{,}$$

\noindent and $j : I \rightarrow E$ becomes a $g$-coalgebra with the coaction 

$$j \to^{\phi_0} gj\text{.}$$

\noindent The induced morphisms $\hat{p} : E^g\otimes E^g \rightarrow E^g$ and
$\hat{j} : I \rightarrow E^g$ define a monoidale structure on $E^g$. This monoidale is the
Eilenberg-Moore object of $(E, g)$ in $\mathrm{Mon}\mathcal{B}$. Moreover, the
map $u: E^g \rightarrow E$ is a strong monoidal morphism.

There is an equivalence of categories which establishes a correspondance between
monoidal comonad maps $(k, \kappa) : (E, g) \rightarrow (E', g')$ and diagrams
\eqref{lift}, now in $\mathrm{Mon}\mathcal{B}$.

What we have been discussing so far were standard constructions in a monoidal
bicategory. Further we introduce some concepts, which we will later use for our
specific purposes.
 
An opmonoidal morphism $(w, \psi_2, \psi_0) : E \rightarrow D$
between monoidales is a monoidal morphism in $\mathcal{B}^{co}$. Thus an
opmonoidal morphism consists of a morphism $w : E \rightarrow F$ and
2-cells $\psi_2 : wp \Rightarrow p(w\otimes w)$, $\psi_0 : hj \Rightarrow j$ in $\mathcal{B}$ satisfying three axioms.

Monoidal morphisms and opmonoidal morphisms lead
us to the setting of a double category \cite{KS}, \cite{V}. Recall briefly, that
a double category has objects and two types of arrows, called horizontal morphisms and vertical morphisms, forming bicategories in the two directions.
Also, there is a set of squares, each square having as its sides two
horizontal morphisms and two vertical morphisms. Squares can be composed in
the two directions.

As suggested, there is a double category with objects the monoidales
in $\mathcal{B}$, horizontal arrows the monoidal morphisms and vertical morphisms
the opmonoidal morphisms. A square is a 2-cell

$$\bfig
\square[E`E'`D`D';f`w`w'`f']
\morphism(250,300)/=>/<0,-100>[`;\sigma]
\efig$$ 

\noindent with $f$ and $f'$ monoidal morphisms and $w$ and $w'$ opmonoidal
morphisms such that:

$$\Big( w'p(f\otimes f) \to^{\psi_2(f\otimes f)} p(w'\otimes w')(f\otimes f)
\to^{p(\sigma\otimes\sigma)} p(f'\otimes f')(w\otimes w) \to^{\phi_2(w\otimes
w)} f'p(w\otimes w)\Big)$$ 
$$= \Big( w'p(f\otimes f) \to^{w'\phi_2} w'fp \to^{\psi_2p}
f'wp \to^{\sigma} f'p(w\otimes w)\Big)$$
$$\text{and} \qquad \Big( w'j \to^{\phi_0} w'fj \to^{\sigma j} f'wj
\to^{f'\psi_0} f'j) = (w'j \to^{psi_0} j \to^{\phi_0} f'j \Big)\text{.}$$

Suppose that $(E, g)$ and $(D, g')$ are monoidal comonads. An opmorphism of
monoidal comonads $(h, \sigma) : (E, g) \rightarrow (D, g')$ is an opmonoidal
morphism $h : E \rightarrow D$ together with a square $\sigma : hg \Rightarrow
g'h$, such that $(h, \sigma) : (E, g) \rightarrow (D, g')$ is a map of comonads.

As with the monoidal comonad maps, there is an equivalence of categories
which establishes a correspondence between opmorphisms of monoidal comonads $h :
(E, g) \rightarrow (D, g')$ and diagrams

$$
\bfig
\square[E^g`D^{g'}`E`D;\hat{h}`u`u`h]
\place(250,250)[\cong]
\efig
$$ 

\noindent of opmonoidal morphisms. 

A coaction of an opmonoidal morphism $h : E \rightarrow D$ on a morphism $l : E
\rightarrow D$ of $\mathcal{B}$ is a 2-cell $\lambda : lp \Rightarrow
p(h\otimes l)$ satisfying two axioms, relating it to the opmonoidal structure on
$h$.

Suppose that $(h, \sigma) : (E, g) \to (D, g')$ is an opmorphism
of monoidal comonads and $(l, \tau) : (E, g) \to (D, g')$ is a comonad map.
We will say that a left coaction $\lambda$ of $h$ on $l$ respects the
comonad structure if

$$\Big( lp(g\otimes g) \to^{\lambda(g\otimes g)} p(h\otimes l)(g\otimes g)
\to^{p(\sigma\otimes\tau)} p(g'\otimes g')(h\otimes l) \to^{\mu(h\otimes l)}
g'p(h\otimes l)\Big)$$ 
$$= \Big( lp(g\otimes g) \to^{l\mu} lgp \to^{\tau p} g'lp \to^{g'\lambda}
g'p(h\otimes l)\Big)\text{.}$$

A left coaction of $h$ on
$l$ respects comonad structure if and only if it can be lifted to a coaction of
$\hat{l} : E^g \rightarrow D^{g'}$ on $\hat{h} : E^g \rightarrow D^{g'}$.

There is a similar notion of a right coaction of an opmorphism. 

\section{The bicategory of comodules}\label{bc}

Suppose that $\mathcal{V} = (\mathcal{V}, \otimes, I,
c)$ is a braided monoidal category with finite colimits. Assume that each of the functors
$X\otimes -$ preserves equalizers of coreflexive pairs.

We will work with monoidal bicategory $\mathcal{C} =
\mathrm{Comod}\mathcal{V}$ defined in \cite{DMS}. Objects of $\mathcal{C}$ are
the comonoids

$$C = (C, \delta : C \rightarrow C\otimes C, \epsilon : C \rightarrow I)$$

\noindent in $\mathcal{V}$. The homcategory $\mathcal{C}(C, D)$ is the category
of Eilenberg-Moore coalgebras for the comonad $C\otimes - \otimes D : \mathcal{V}
\rightarrow \mathcal{V}$. A 1-cell from $C$ to $D$, depicted $C \spanarr D$, is a
comodule from $C$ to $D$. Recall that this consists of an object $M$ and a coaction
map $\delta : M \rightarrow C\otimes M \otimes D$ satisfing two axioms. A 2-cell $\alpha : M
\Rightarrow N : C \spanarr D$ is a coaction respecting map $M \rightarrow N$.
An object of $\mathcal{C}(C, I)$ is a left $C$-comodule, and an object of
$\mathcal{C}(I, C)$ is a right $C$-comodule. A comodule $M : C \spanarr D$
becomes a left $C$-comodule and a right $D$-comodule via coactions

\begin{align}
\delta_l : M \to^{\delta} C\otimes M\otimes D \to^{1\otimes 1 \otimes \epsilon} C\otimes M \nonumber\\
\label{lr}\\
\nonumber\delta_r : M \to^{\delta} C\otimes M\otimes D\to^{\epsilon\otimes 1 \otimes 1} M\otimes D\text{.}
\end{align}

\noindent The maps $\delta_l$ and $\delta_r$ are called left and right coactions
on $M$. If $M$ is a left $C$-comodule and $N$ is a right
$D$-comodule, then a tensor product $M\otimes_C N$ over $C$ is defined by a
(coreflexive) equalizer:

$$M\otimes_C N\to^i M\otimes N \two^{\delta_r\otimes 1}_{1\otimes \delta_l}
M\otimes C\otimes N\text{.}$$

If $M$ is a comodule $E \spanarr C$ and $N$ is a comodule $C \spanarr F$, then
using the fact that the functor $E\otimes - \otimes F$ preserves coreflexive
equalizers, $M\otimes_C N$ becomes a comodule $E \spanarr F$. Composition in
$\mathcal{C}$ is defined by $N\circ M = M\otimes_C N$. It is associative up to
canonical isomorphism.

Any comonoid $C$ is a $C \spanarr C$ comodule with the coaction

$$C \to^{\delta_3} C\otimes C\otimes C\text{.}$$

\noindent The identity comodule on $C$ is $C$ itself.

As it is a convention to name such bicategories after arrows,
$\mathrm{Comod}{\mathcal{V}}$ is called the bicategory of comodules. For more on
the theory of comodules we refer the reader to \cite{path}.

Each comonoid morphism $f: C \rightarrow D$ determines
an adjoint pair in $\mathcal{C}$:

$$\bfig
\morphism/@{>}|-*@{|}/[f_\ast\dashv f^\ast : C`D;]
\efig
$$ 

\noindent The comodules $f^\ast : C \spanarr D$ and $f_\ast : D \spanarr C$ are
both $C$ as objects of $\mathcal{V}$ with coactions respectively

$$C \to^{\delta_3} C\otimes C\otimes C \to^{1\otimes 1\otimes f} C\otimes C\otimes D \quad\text{and}$$

$$C \to^{\delta_3} C\otimes C\otimes C \to^{f\otimes 1\otimes 1} D\otimes D\otimes C\text{.}$$

\noindent The counit 

$$
\bfig
\Atriangle/@{<-}|-*@{|}`@{>}|-*@{|}`@{>}|-*@{|}/<400,400>[C`D`D;f^\ast`f_\ast`D]
\morphism(400,200)|r|/=>/<0,-100>[`;\beta]
\efig
$$ 

\noindent of the adjunction is the map

$$C\otimes_CC \cong C \to^f D\text{.}$$

\noindent The unit

$$
\bfig
\Atriangle/@{<-}|-*@{|}`@{>}|-*@{|}`@{>}|-*@{|}/<400,400>[D`C`C;f_\ast`f^\ast`C]
\morphism(400,100)/=>/<0,100>[`;\alpha]
\efig
$$
 
\noindent is induced by the comultiplication $\delta : C \rightarrow C\otimes C$
as shown on the diagram:

$$
\bfig
\Vtriangle/>`<-`<-/<300,300>[C\otimes_DC`C\otimes C`C;eq.`\alpha`\delta]
\place(1550,300)[C\otimes D\otimes C]
\morphism(750,320)<550,0>[`;(1\otimes f\otimes1)(\delta\otimes1)]
\morphism(750,280)|b|<550,0>[`;(1\otimes f\otimes1)(1\otimes\delta)]
\efig 
$$

\noindent In particular, we have comodules $\epsilon^\ast : I \spanarr C$ and
$\epsilon_\ast : C \spanarr I$. The compositions

$$C \spanarr^M D \spanarr^{\epsilon_\ast} I$$

$$I \spanarr^{\epsilon_\ast} C \spanarr^M D$$

\noindent reconfirm the fact that $M$ is a left $C$-comodule and a right
$D$-comodule by \eqref{lr}.

The monoidal structure on $\mathcal{C}$ extends the
monoidal structure on $\mathcal{V}$. The tensor product of comonoids $C = (C, \delta,
\epsilon)$ and $C' = (C', \delta', \epsilon')$ is $C\otimes C'$ with comultiplication and counit:

$$(1\otimes c\otimes 1)(\delta\otimes\delta') : C\otimes C' \rightarrow C\otimes C'\otimes C\otimes C'$$

$$\epsilon\otimes\epsilon' : C\otimes C' \rightarrow I\text{.}$$
 
\noindent The monoidal unit of $\mathcal{C}$ is $I$, which is a comonoid in an
obvious way. On 1-cells, the tensor product of comodules $M : C \spanarr D$ and $N : C'
\spanarr D'$ is $M\otimes N$, which is a comodule $C\otimes C' \spanarr D\otimes
D'$ with coaction:
 
$$M\otimes N \to^{\delta\otimes\delta} C\otimes M\otimes D\otimes C'\otimes N\otimes D' \to^{c_{142536}} C\otimes C'\otimes M\otimes N\otimes D\otimes D'\text{.}$$

\noindent Here and below a morphism named $c$ subscripted with a permutation is
an isomorphism coming from the braiding.

We often encounter comodules going between tensor products of comonoids,
like $M : C_1\otimes C_2\otimes \ldots C_n \spanarr D_1\otimes D_2\otimes \ldots
D_m$. Such a comodule inherently is a left $C_i$-comodule, for $1 \leq i \leq
n$, and a right $D_i$-comodule, for $1 \leq i \leq m$.
Conversely, given left $C_i$-comodule and right $D_i$-comodules structures on
$M$ compatible in a certain way, $M$ becomes a $C_1\otimes C_2\otimes
\ldots C_n \spanarr D_1\otimes D_2\otimes \ldots D_m$ comodule. This enables us
to describe a comodule just by giving left and right coactions. A map $M
\rightarrow N$ is a comodule map between comodule $M, N : C_1\otimes C_2\otimes
\ldots C_n \spanarr D_1\otimes D_2\otimes \ldots D_m$ if and only if it is a left $C_i$-comodule
map for all $1 \leq i \leq n$ and a right $D_i$-comodule map for all $0 \leq i \leq m$.

$\mathcal{C}$ is a right autonomous monoidal
bicategory. The bidual of a comonoid $C = (C,\delta, \epsilon)$ is the comonoid with the opposite
comultiplication $C^{o} = (C, c\delta, \epsilon)$. Unit and counit are comodules
$e : C^o\otimes C \spanarr I$ and $n : I \spanarr C\otimes C^o$, both of which
are $C$ as objects of $\mathcal{V}$ and the coactions on them are respectively

$$C \stackrel{\delta_3}{\rightarrow} C\otimes C\otimes C \stackrel{1\otimes c}{\rightarrow} C\otimes C\otimes C\quad\text{and}$$

$$C \stackrel{\delta_3}{\rightarrow} C\otimes C\otimes C \stackrel{c\otimes 1}{\rightarrow} C\otimes C\otimes C\text{.}$$

It follows that $C^o\otimes C$ is a monoidale in $\mathcal{C}$. The multiplication is $p = 1\otimes e\otimes
1$ and the unit is $j = n$. Still more explicitly, the multiplication
$C^o\otimes C\otimes C^o\otimes C \spanarr C^o\otimes C$ is $p = C\otimes
C\otimes C$ with coaction

$$C^{\otimes 3} \to^{\delta_3\otimes\delta_3\otimes\delta_3} C^{\otimes 9} \to^{c_{146725839}} C^{\otimes 9}$$

\noindent and the unit $I \spanarr C^o\otimes C$ is $j = C$ with coaction

$$C \to^{\delta_3} C^{\otimes 3} \to^{c_{213}} C^{\otimes 3}$$

Let $M$ and $N$ be comodules $I \spanarr C^o\otimes C$. Regard these as
comodules $C \spanarr C$ by the equivalence

\begin{equation}
\mathcal{C}(I, C^o\otimes C) \simeq \mathcal{C}(C, C)\text{.}
\label{CC}
\end{equation}

\noindent The composite

$$I \spanarr^{M\otimes N} C^o\otimes C\otimes C^o\otimes C \spanarr^p C^o\otimes
C$$

\noindent is $M\otimes_C N$ with right $C^o\otimes C$-coaction the
unique map $\delta_l : M\otimes_C N \to (M\otimes_C N)\otimes C\otimes C$ making

$$
\bfig
\square<1200,500>[M\otimes_C N`M\otimes N`(M\otimes_CN)\otimes C\otimes
C`M\otimes N\otimes C\otimes C;i`\delta_l`\delta_l`i\otimes C\otimes C]
\efig
$$

\noindent commute. 

The equivalence \eqref{CC} is a monoidal equivalence,
where the monoidal stucture on the left side comes from the pseudomonoid structure on $C^o\otimes C$ and
the monoidal structure on the right is defined to be the composition in
$\mathcal{C}$.

Next we prove some technical lemmas, which we use in Section
\ref{secqc}.

\begin{lemma}\label{l1} Suppose that $\beta$ is a 2-cell:

$$
\bfig
\Atriangle/@{>}|-*@{|}`@{>}|-*@{|}`@{>}|-*@{|}/<500,500>[A\otimes C\otimes
C^o\otimes B`A\otimes B`D;A\otimes e\otimes B`M`N]
\morphism(500,250)/=>/<0,-100>[`;\beta]
\efig
$$ 

\noindent Let $\alpha : M \to N$ be the map in $\mathcal{V}$ determined by the
pasting composite

\begin{equation}
\bfig
\square/@{>}|-*@{|}`@{>}|-*@{|}``@{>}|-*@{|}/<850,600>[A\otimes B`A\otimes
C\otimes C^o\otimes B`A\otimes B`A\otimes
B;A\otimes\epsilon^\ast\otimes B`A\otimes B``A\otimes B]
\btriangle(850,0)/@{>}|-*@{|}`@{>}|-*@{|}`@{>}|-*@{|}/<600,600>[A\otimes
C\otimes C^o\otimes B`A\otimes B`D;A\otimes e\otimes B`M`N]
\morphism(1050,300)/=>/<0,-100>[`;\beta] 
\morphism(425,350)/=>/<0,-100>[`;A\otimes\epsilon \otimes B]
\efig
\label{pc1}
\end{equation}

\noindent It satisfies

\begin{equation}
M \to^{\delta_l} C\otimes C\otimes M \two^{1\otimes\epsilon\otimes 1}_{\epsilon\otimes 1\otimes 1} C\otimes M \to^{1\otimes\alpha} C\otimes N
\label{1}
\end{equation}

\noindent The 2-cell $\beta$ is uniquely determined by a left $A\otimes B$-
right $D$-comodule map $\alpha$ which satisfies \eqref{1}.

\end{lemma}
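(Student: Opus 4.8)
The plan is to exhibit the asserted bijection as a mate correspondence for an adjunction supplied by the right autonomous structure on $\mathcal{C}$. Tensoring the biduality data on the left by $A$ and on the right by $B$ yields, in $\mathcal{C}$, an adjunction whose two legs are $A\otimes e\otimes B : A\otimes C\otimes C^o\otimes B \spanarr A\otimes B$ and the coevaluation-type comodule $A\otimes\epsilon^\ast\otimes B : A\otimes B \spanarr A\otimes C\otimes C^o\otimes B$ inserting a $C\otimes C^o$ pair, one of whose triangle 2-cells is precisely the 2-cell $A\otimes\epsilon\otimes B$ appearing in the square of \eqref{pc1}. Under this adjunction, whiskering a 2-cell $\beta$ by $A\otimes\epsilon^\ast\otimes B$ and pasting with $A\otimes\epsilon\otimes B$ is exactly the transpose, and I would \emph{define} $\alpha$ to be the underlying $\mathcal{V}$-map of the resulting comodule morphism $M\circ(A\otimes\epsilon^\ast\otimes B) \Rightarrow N$; the content of the lemma is then to identify both sides of this transpose concretely.

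First I would compute the composite comodule $N\circ(A\otimes e\otimes B) = (A\otimes e\otimes B)\otimes_{A\otimes B} N$ from the definition of composition as a coreflexive equalizer, using that $e$ is $C$ as an object of $\mathcal{V}$ with the coactions built from $\delta_3$ and the braiding recorded earlier. The aim of this computation is to read off that a comodule map $\beta$ from $M$ into this composite amounts to a $\mathcal{V}$-map $M \rightarrow C\otimes N$ respecting the $A$-, $B$- and $D$-coactions and equalizing the pair defining $\otimes_{A\otimes B}$. Dually, identifying the underlying object of $M\circ(A\otimes\epsilon^\ast\otimes B)$ should show that a comodule map out of it into $N$ is nothing but a left $A\otimes B$- and right $D$-comodule map $\alpha : M\rightarrow N$ subject to \eqref{1}; here the two composites $1\otimes\epsilon\otimes1$ and $\epsilon\otimes1\otimes1$ in \eqref{1} are exactly the two legs of that equalizer, obtained from the separate $C$- and $C^o$-coactions of $M$ contracted through $e$.

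For the converse I would reverse the transpose: given $\alpha$ satisfying \eqref{1}, form $M \xrightarrow{\delta_l} C\otimes C\otimes M \rightarrow C\otimes N$, use \eqref{1} to check that it equalizes the defining pair of $(A\otimes e\otimes B)\otimes_{A\otimes B} N$, and conclude that it factors uniquely through the equalizer to give a $\mathcal{V}$-map $\beta$; the hypothesis that $\alpha$ is a left $A\otimes B$- and right $D$-comodule map then forces $\beta$ to respect every coaction, so that $\beta$ is a genuine 2-cell. That the two assignments $\beta\mapsto\alpha$ and $\alpha\mapsto\beta$ are mutually inverse follows from the two zigzag identities of the biduality, tensored by $A$ and $B$, which is what makes the transpose a bijection and hence yields the claimed unique determination.

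I expect the main obstacle to be the explicit matching of the equalizer condition defining $\otimes_{A\otimes B}$ with the fork \eqref{1}, together with the bookkeeping of the braiding permutations hidden in the coactions on $e$, on the coevaluation $\epsilon^\ast$, and on the tensor products. In particular, the step requiring care is checking that whiskering $M$ by $A\otimes\epsilon^\ast\otimes B$ leaves its underlying object essentially unchanged, so that the transpose really does produce a map $\alpha : M\rightarrow N$ in $\mathcal{V}$ rather than out of some larger object; once this and the equalizer identification are in place, the mate correspondence makes the bijection automatic.
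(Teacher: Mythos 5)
Your proposal hinges on an adjunction that does not exist, so the mate correspondence you want to invoke is not available. In $\mathcal{C}$ the 1-cells $A\otimes e\otimes B$ and $A\otimes\epsilon^\ast\otimes B$ are not adjoint, and the 2-cell $A\otimes\epsilon\otimes B$ of \eqref{pc1} is not a triangle identity. Two confusions are at work. First, $\epsilon^\ast\otimes\epsilon^\ast$ is not the biduality unit: the unit is $n$, which is $C$ as an object of $\mathcal{V}$, whereas $\epsilon^\ast\otimes\epsilon^\ast$ is $C\otimes C$; moreover the biduality data $(e,n)$ form an adjunction between the \emph{objects} $C$ and $C^o$ of the monoidal bicategory, not an adjunction between these 1-cells. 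Second, and decisively, if pasting with \eqref{pc1} were transposition under an adjunction $(A\otimes e\otimes B)\dashv(A\otimes\epsilon^\ast\otimes B)$, it would be a bijection from the 2-cells $\beta$ onto \emph{all} 2-cells $M\circ(A\otimes\epsilon^\ast\otimes B)\Rightarrow N$, that is, onto all left $A\otimes B$- right $D$-comodule maps $\alpha : M\rightarrow N$; condition \eqref{1} would then be automatic, which it is not --- the lemma exists precisely because it is not. Concretely, take $\mathcal{V}=Set$, so that comonoids are sets and $\mathcal{C}$ is the bicategory of spans; take $A=B=D=I$, let $M$ be a span $C\times C\leftarrow M\rightarrow 1$ whose two legs $s,t$ differ at some element, and let $N\neq\emptyset$. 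Then $N\circ(A\otimes e\otimes B)$ is the span $C\times C\leftarrow C\times N\rightarrow 1$ with left leg $(c,n)\mapsto(c,c)$, so there is \emph{no} 2-cell $\beta$ whatsoever, while every function $M\rightarrow N$ is a 2-cell $M\circ(A\otimes\epsilon^\ast\otimes B)\Rightarrow N$, and none of them satisfies \eqref{1}, which here reads $(s(m),\alpha(m))=(t(m),\alpha(m))$. This example also exposes the related misstep in your second paragraph: 2-cells out of $M\circ(A\otimes\epsilon^\ast\otimes B)$ into $N$ are comodule maps $\alpha$ subject to \emph{no} further condition; \eqref{1} is not ``the two legs of the equalizer defining $\otimes_{A\otimes B}$'', and the zigzag identities you appeal to for mutual inversion have nothing to rest on.

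What actually drives the paper's proof is cofreeness, not adjointness. One computes that $N\circ(A\otimes e\otimes B)$ is $C\otimes N$ and that both its left $C$-coaction and its left $C^o$-coaction are cofree, i.e.\ induced by the comultiplication of the $C$ factor. Comodule maps into a cofree comodule biject with their corestrictions, so $\beta$ determines $\alpha=(\epsilon\otimes N)\beta$ and is recovered as $(1\otimes\alpha)\delta_l$. The source of \eqref{1} is that $\beta$ must respect the left $C$- and the left $C^o$-coactions of $M$ \emph{simultaneously}: each of the two yields a reconstruction of $\beta$ from $\alpha$, and \eqref{1} says exactly that the two reconstructions agree; conversely, for $\alpha$ satisfying \eqref{1} and the $A$-, $B$-, $D$-comodule conditions, the common composite is a genuine 2-cell. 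Several of your ingredients --- computing the composite along $A\otimes e\otimes B$ as a coreflexive equalizer, the formula $(1\otimes\alpha)\delta_l$, the transfer of the outer comodule conditions between $\beta$ and $\alpha$ --- reappear in the correct argument, but they must be organized around the universal property of the cofree comodule $C\otimes N$ rather than around a transpose that does not exist.
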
 

\begin{proof}
The comodule $N\circ(A\otimes e\otimes B)$ is $C\otimes N$ with 
coaction

$$C\otimes N \to^{\delta_3\otimes \delta} C\otimes C\otimes C\otimes A\otimes
B\otimes N \otimes D \to^{c_{4135267}} C\otimes A\otimes B\otimes C\otimes
C\otimes N\otimes D\text{.}$$

\noindent The left $C^o$ and $C$ coactions on $C\otimes N$ both are the cofree
coactions, i.e. they are determined by the comultiplications. 

The basic
property of a cofree comodule is that any comodule map $\beta : M \Rightarrow C\otimes N$ to a cofree comodule
is uniquely determined by its corestriction to $N$, by which is meant the map
$\alpha = (\epsilon\otimes N)\beta : M \Rightarrow N$ in $\mathcal{V}$. Specifically, $\beta$ can be recovered from $\alpha$ as

$$M \to^{\delta_l} C\otimes M \to^{1\otimes\alpha} C\otimes N\text{.}$$ 
 
It follows that in the setting of the lemma $\beta$ can be reconstructed from
$\alpha$ in two ways. The condition \eqref{1} asserts that these
two reconstructions are the same. 

It is easily seen that $\beta$ is a left $A$-, $B$- right $D$- comodule map
if and only if $\alpha$ is. The lemma is proved.
\end{proof}

\begin{lemma}\label{l2}
Let $\beta$ be a 2-cell:

$$
\bfig
\Atriangle/@{>}|-*@{|}`@{>}|-*@{|}`@{>}|-*@{|}/<400,400>[I`C^o\otimes C`D;n`M`N]
\morphism(400,250)/=>/<0,-100>[`;\beta]
\efig
$$

\noindent Let $\alpha : M \to N$ be a map in $\mathcal{V}$ determined by the
pasting composite:

\begin{equation}
\bfig
\dtriangle(0,0)|llb|/@{>}|-*@{|}`@{>}|-*@{|}`@{>}|-*@{|}/[I`I`C^o\otimes C;1`j`\epsilon^\ast]
\btriangle(500,0)/`@{>}|-*@{|}`@{>}|-*@{|}/[I`C^o\otimes C`D;`M`N]
\morphism(350,250)/=>/<0,-100>[`;\delta]
\morphism(650,250)/=>/<0,-100>[`;\beta]
\efig
\label{pc2}
\end{equation}

\noindent It satisfies:
 
\begin{equation}
C \to^{\alpha} N \to^{\delta_l} C\otimes C\otimes N \two^{1\otimes\epsilon\otimes1}_{\epsilon\otimes1\otimes1} C\otimes N
\label{3}
\end{equation}

\noindent The 2-cell $\beta$ is uniquely determined by a right $D$-comodule map
$\alpha$ which satisfies \eqref{3}.

\end{lemma}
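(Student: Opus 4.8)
The plan is to follow the proof of Lemma~\ref{l1} almost verbatim, with the role of the cofree comodule $C\otimes N$ there played here by the composite $N\circ n$, and with the role of the left $C^o\otimes C$-coaction of the domain (which $M$ lacks, since $M:I\spanarr D$) taken over by the left coaction $\delta_l$ of $N$. First I would compute $N\circ n=n\otimes_{C^o\otimes C}N$ explicitly. Since $n=j$ is the unit of the monoidale $C^o\otimes C$, its underlying $\mathcal V$-object is $C$ and its right $C^o\otimes C$-coaction is $c_{213}\delta_3$; thus $n\otimes N$ is $C\otimes N$ and $N\circ n$ is the coreflexive equalizer, as a subobject of $C\otimes N$, of $\delta_r\otimes 1$ and $1\otimes\delta_l$, where $\delta_r,\delta_l$ are the coactions of $n$ and $N$. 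Using the explicit form $c_{213}\delta_3$ of $\delta_r$ together with the counit axioms, I would show that this equalizer coincides with the equalizer of the coreflexive pair $1\otimes\epsilon\otimes 1,\ \epsilon\otimes 1\otimes 1:C\otimes C\otimes N\to C\otimes N$ precomposed with $\delta_l$, which is the pair displayed in \eqref{3}. This is what one expects from \eqref{CC}, under which $n=j$ corresponds to the identity comodule on $C$.

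With $N\circ n$ so identified, I would invoke the cofree/corestriction principle used in Lemma~\ref{l1}. A 2-cell $\beta:M\Rightarrow N\circ n$ is a comodule map into $N\circ n$, viewed as a subobject of $C\otimes N$, so it is determined by its corestriction along $\epsilon$; and this corestriction is precisely the map $\alpha$ produced by the pasting composite \eqref{pc2}, namely by whiskering $\beta$ with the comultiplication-induced 2-cell $\delta$ and projecting by the counit. Conversely, given $\alpha$, the reconstruction of $\beta$ is $\delta_l\alpha$ regarded as a map into $C\otimes N$; this lands in the subobject $N\circ n$ if and only if $\delta_l\alpha$ equalizes the coreflexive pair above, that is, if and only if $\alpha$ satisfies \eqref{3}. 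Hence $\beta\mapsto\alpha$ is a bijection between such 2-cells and maps $\alpha$ obeying \eqref{3}.

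It remains to match the comodule structures: I would check that $\beta$ is a 2-cell of $\mathcal C$ -- equivalently, a right $D$-comodule map, the left $I$-structure being trivial -- if and only if $\alpha$ is a right $D$-comodule map, which is immediate once the corestriction and the reconstruction are seen to commute with the right $D$-coactions of \eqref{lr}. The main obstacle is the very first step: the bookkeeping of braidings and counits needed to show that the four-fold tensor equalizer defining $n\otimes_{C^o\otimes C}N$ collapses to the three-fold coreflexive pair of \eqref{3}. Once that identification is secured, the corestriction argument and the comodule-map verification are routine and parallel Lemma~\ref{l1}.
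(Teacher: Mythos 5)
Your overall strategy is correct, and the identification it rests on is true: $N\circ n=n\otimes_{C^o\otimes C}N$ is canonically isomorphic, as a right $D$-comodule, to the equalizer $E\subseteq N$ of the coreflexive pair $(1\otimes\epsilon\otimes1)\delta_l,\ (\epsilon\otimes1\otimes1)\delta_l : N\to C\otimes N$ occurring in \eqref{3}; granted this, 2-cells $\beta$ are exactly the right $D$-comodule maps $M\to N$ that factor through $E$, i.e.\ maps $\alpha$ satisfying \eqref{3}, and the bijection is implemented by corestriction, hence agrees with the pasting \eqref{pc2}. This is a genuine repackaging of what the paper does: the paper never states the object-level isomorphism, but works directly at the level of maps, unpacking $\beta$ as a map $\beta' : M\to C\otimes N$ subject to the two counit-contracted forms \eqref{d1} and \eqref{d2} of the equalizer condition defining $\otimes_{C^o\otimes C}$, setting $\alpha=(\epsilon\otimes 1)\beta'$, and deriving from \eqref{d1} and \eqref{d2} two reconstruction formulas, $\beta'=(1\otimes\epsilon\otimes1)\delta_l\alpha$ and $\beta'=(\epsilon\otimes1\otimes1)\delta_l\alpha$, whose agreement is precisely condition \eqref{3}. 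Your route buys a reusable object-level computation of the composite $N\circ n$, after which the hom-level statement is essentially formal; the paper's route avoids having to construct and verify a two-sided inverse at the object level, and makes it transparent why \eqref{3} is the right condition (it is the coincidence of the two reconstructions). The actual computations are the same in both: counit contractions of the four-fold equalizer condition, naturality of the braiding, and coassociativity of the coactions.

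Two slips should be repaired when you execute the plan. First, the ``cofree/corestriction principle used in Lemma \ref{l1}'' is not available here: in Lemma \ref{l1} the domain carries left $C$- and $C^o$-coactions and the target is cofree over them, whereas here $M$ is a mere right $D$-comodule, so cofreeness does not force $\beta$ to be determined by its corestriction; in your scheme that determination must come from the isomorphism $N\circ n\cong E$ itself (whose forward map is the corestriction), i.e.\ exactly from the step you defer as the ``main obstacle'' --- which is therefore not bookkeeping but the whole proof. Second, $\delta_l\alpha$ lands in $C\otimes C\otimes N$, not $C\otimes N$; the reconstruction of $\beta$ is the contraction $(1\otimes\epsilon\otimes1)\delta_l\alpha$ (equal to $(\epsilon\otimes1\otimes1)\delta_l\alpha$ once \eqref{3} holds), and checking that this map equalizes $\delta_r\otimes1$ and $1\otimes\delta_l$ --- i.e.\ lands in $n\otimes_{C^o\otimes C}N$ --- is not a restatement of \eqref{3} but the nontrivial converse half of your identification, requiring coassociativity of $\delta_l$ together with \eqref{3}.
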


\begin{proof}
 The 2-cell $\beta$ is a map $C \rightarrow
C\otimes_{C^o\otimes C}N$. This is induced by a map $\beta' : C \rightarrow C\otimes N$ satisfying

\begin{align}
(1 \otimes ((\epsilon\otimes 1)\delta_l))\beta' = (c\delta\otimes
1)\beta'
\label{d1}
\end{align}

\begin{align}
(1 \otimes ((1\otimes\epsilon)\delta_l))\beta' = (\delta\otimes 1)\beta'
\label{d2}
\end{align}

\noindent We have $\alpha = (\epsilon\otimes 1)\beta'$. From $\alpha$
we can recover $\beta'$ in two ways: using \eqref{d1} it can
be shown that $\beta'$ can be reconstructed from $\alpha$ as the top composite
in \eqref{3}, or using \eqref{d2} it can be shown that $\beta'$ can be
reconstructed from $\alpha$ as the bottom composite in
\eqref{3}. So, the map $\alpha$ defined from $\beta$
satisfies \eqref{3}. Conversely, $\beta$ can be defined from a map $\alpha$
which satisfies \eqref{3}.

It is easily checked that $\beta$ is a right $D$-comodule map if and only
if $\alpha$ is. The lemma is proved.
 \end{proof}

The maps $\xi_2 = 1\otimes \epsilon\otimes 1 : C\otimes
C\otimes C \rightarrow C\otimes C$ and $\xi_0 = \delta : C \rightarrow C\otimes C$ define a
monoidal morphism structure on $\epsilon^\ast = \epsilon^\ast\otimes\epsilon^\ast
: I \spanarr C^o\otimes C$. For any $n \geq 0$ we have a 2-cell:

\begin{equation}
\bfig
\square/@{>}|-*@{|}`@{>}|-*@{|}`@{>}|-*@{|}`@{>}|-*@{|}/<500,500>[I`(C^o\otimes
C)^{\otimes n}`I`(C^o\otimes C);\epsilon^\ast`1`p_n`\epsilon^\ast] 
\morphism(250,250)/=>/<0,-100>[`;\xi_n]
\efig
\label{xin}
\end{equation} 

\begin{lemma}\label{lmon}
For any $n$, the function defined on the set of 2-cells

$$\mathcal{C}((C^o\otimes C)^{\otimes n}, C^o\otimes C)(M, N\circ p_n)$$

\noindent with values in 

\begin{equation}
\mathcal{C}(I, C^o\otimes C)(M\circ \epsilon^\ast, N\circ \epsilon^\ast)
\label{values}
\end{equation}

\noindent taking

$$
\bfig
\Atriangle/@{>}|-*@{|}`@{>}|-*@{|}`@{>}|-*@{|}/<400,400>[(C^o\otimes C)^{\otimes
n}`C^o\otimes C`C^o\otimes C;p_n`M`A] 
\morphism(400,250)/=>/<0,-100>[`;\beta]
\efig
$$

\noindent to the pasting composite

$$
\bfig
\btriangle(500,0)/@{>}|-*@{|}`@{>}|-*@{|}`@{>}|-*@{|}/[(C^o\otimes C)^{\otimes
n}`C^o\otimes C`C^o\otimes C;p_n`M`A] 
\morphism(650,250)/=>/<0,-100>[`;\beta]
\square/@{>}|-*@{|}`@{>}|-*@{|}`@{>}|-*@{|}`@{>}|-*@{|}/<500,500>[I`(C^o\otimes
C)^{\otimes n}`I`(C^o\otimes C);\epsilon^\ast`1``\epsilon^\ast] 
\morphism(250,250)/=>/<0,-100>[`;\xi_n]
\efig
$$

\noindent is injective. 

\end{lemma}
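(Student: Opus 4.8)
Write $\Phi_n$ for the map in the statement, sending a $2$-cell $\beta\colon M\Rightarrow N\circ p_n$ to the pasting $(N\xi_n)\cdot(\beta\epsilon^\ast)$ in $\mathcal{C}(I,C^o\otimes C)(M\circ\epsilon^\ast,N\circ\epsilon^\ast)$. The plan is to prove injectivity of $\Phi_n$ by induction on $n$, peeling off one multiplication at a time and reducing each step to Lemma~\ref{l1}. The key structural observation is that, with the left-bracketing convention, $p_n=p_{n-1}\circ(p\otimes 1^{\otimes(n-2)})$, and the inner map $p\otimes 1^{\otimes(n-2)}=1_{C^o}\otimes e\otimes 1_{C}\otimes 1^{\otimes(n-2)}$ is exactly of the form $A\otimes e\otimes B$ appearing in Lemma~\ref{l1}, with $A=C^o$ and $B=C\otimes(C^o\otimes C)^{\otimes(n-2)}$. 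Hence, setting $N''=N\circ p_{n-1}$, a $2$-cell $\beta\colon M\Rightarrow N\circ p_n$ is literally a $2$-cell $\beta\colon M\Rightarrow N''\circ(A\otimes e\otimes B)$, which is the situation Lemma~\ref{l1} describes.

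First I would apply Lemma~\ref{l1} to this $\beta$. The lemma produces, by pasting $\beta$ with the unit $\epsilon^\ast$ in the contracted slot, a $\mathcal{V}$-map $\alpha_1\colon M\to N''$ that is a left $A$-, left $B$-, right $C^o\otimes C$-comodule map satisfying the analogue of \eqref{1}, and it asserts that this passage is a bijection; in particular it is injective. Reading $\alpha_1$ as a genuine $2$-cell amounts to restricting the first tensor factor of the source $M$ along $\epsilon^\ast$, which yields a comodule $M'$ over $(C^o\otimes C)^{\otimes(n-1)}$ with $\alpha_1\in\mathcal{C}((C^o\otimes C)^{\otimes(n-1)},C^o\otimes C)(M',N\circ p_{n-1})$. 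To this I would apply the inductive hypothesis that $\Phi_{n-1}$ is injective. Since $\Phi_n$ is then exhibited as the composite of the Lemma~\ref{l1} correspondence followed by $\Phi_{n-1}$, and a composite of injections is injective, the inductive step closes.

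The point requiring care, and the step I expect to be the main obstacle, is the verification that $\Phi_n$ really does factor in this way. This is a coherence statement about the comparison cells \eqref{xin}: because $\epsilon^\ast$ is a monoidal morphism with $\xi_2=1\otimes\epsilon\otimes 1$ and $\xi_0=\delta$, the cell $\xi_n$ is built inductively from $\xi_{n-1}$ together with one copy of $\xi_2$, so that whiskering $\beta$ with $\epsilon^\ast$ in the first slot and pasting the corresponding $\xi_2$-component reproduces exactly the Lemma~\ref{l1} pasting \eqref{pc1}, while the residual $\xi_{n-1}$ is precisely the datum contributed by $\Phi_{n-1}$. Getting the bookkeeping of which left coactions survive each restriction to match the geometric pasting in the statement is the fiddly heart of the argument. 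The base cases are comparatively direct: for $n=0$ one has $p_0=j$ and $\xi_0=\delta$, and injectivity of $\Phi_0$ is exactly Lemma~\ref{l2} via \eqref{3}; for $n=1$ one has $p_1=1$ and $\xi_1=\mathrm{id}$, so $\Phi_1$ is whiskering by $\epsilon^\ast$, whose injectivity I would deduce from the cofree description of the right coaction used in the proof of Lemma~\ref{l1} (equivalently from the adjunction $\epsilon_\ast\dashv\epsilon^\ast$ underlying \eqref{CC}).

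Should the coherence bookkeeping for $\xi_n$ become unwieldy, I would instead give the argument in one stroke by a direct computation of the comodule $N\circ p_n$. The claim is that all of its \emph{internal} left coactions --- those in the slots of $p_n$ that are contracted by the counits $e$ --- are cofree. Granting this, the uniqueness principle used in the proof of Lemma~\ref{l1}, namely that a comodule map into a cofree comodule is determined by its corestriction along the counit, shows that any $\beta\colon M\Rightarrow N\circ p_n$ is recoverable from its corestriction along all these cofree factors at once. That total corestriction is exactly $\Phi_n(\beta)$, since $\xi_n$ is assembled from the counits $\epsilon$ that effect these projections, and injectivity follows immediately.
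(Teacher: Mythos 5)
Your proof is correct and is essentially the paper's own argument: your induction, peeling off one whiskered copy of $\xi_2$ at a time via Lemma~\ref{l1}, with base cases $n=0$ (Lemma~\ref{l2}) and $n=1$ (forgetting the left $C^o\otimes C$-comodule structure), is exactly the unrolled form of the paper's proof, which writes $\xi_n$ as a pasting composite of whiskered copies of $\xi_2$ and notes that pasting by each piece is injective. Incidentally, your assignment of the two technical lemmas is the correct one --- the paper's proof cites Lemma~\ref{l1} for the $n=0$ case and Lemma~\ref{l2} for the $\xi_2$-pastings, which appears to be a transposition of the two labels.
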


\begin{proof}
The $n = 0$ case follows from Lemma \ref{l1}. For $n = 1$ the function
forgets the left $C^o\otimes C$ comodule structure which clearly is injective.
For $n \geq 2$, the 2-cell $\xi_n$ can be written as a pasting composite of the
2-cells $\xi_2\otimes(C^o\otimes C)^{\otimes n-1}, \ldots,
\xi_2\otimes(C^o\otimes C)^{\otimes2}, \xi_2$. Pasting from the left by each of
these is an injective function by Lemma \ref{l2}, hence pasting from the left by $\xi_n$ is injective too.
 \end{proof}  

\section{Comonads in the bicategory of comodules}\label{sec4}

We will use the lower case Greek letters $\epsilon$ and $\delta$ for counits and
comultiplications of both comonads in $\mathcal{C}$ and the comonoids. Although
these are not the same, below it will become clear that such notation is not
confusing.

Let $E$ be a comonoid. There is an equivalence of categories between comonads
on $E$ in the bicategory $\mathcal{C}$ of comodule and comonoid maps with
codomain $E$. If $\epsilon : G \rightarrow E$ is a comonoid map, then the
adjunction

$$\bfig
\morphism/@{>}|-*@{|}/[\epsilon_\ast\dashv \epsilon^\ast : E`G;]
\efig
$$ 

\noindent induces a comonad on $E$. Conversely, if $G$ is a comonad on
$E$ with comonad comultiplication $\delta : G \rightarrow G\otimes_E G$ and
comonad counit $\epsilon : G \rightarrow E$, then $G$ itself becomes a comonoid
with comultiplication and counit

$$G \to^\delta G\otimes_E G \to^i G\otimes G$$  

$$G \to^\epsilon E \to^\epsilon I$$ 

\noindent while $\epsilon : G \rightarrow E$ becomes a comonoid map. In
fact, the comonoid $G$ is the Eilenberg-Moore object of $(E, G)$ with the
universal $G$-coalgebra

$$\bfig
\Atriangle/@{>}|-*@{|}`@{>}|-*@{|}`@{>}|-*@{|}/<400,400>[G`E`E;\epsilon_\ast`\epsilon_\ast`g]
\morphism(400,250)/=>/<0,-100>[`;\delta]
\efig$$
  
\begin{proposition}
$\mathcal{C}$ admits the Eilenberg-Moore construction for comonads.
\end{proposition}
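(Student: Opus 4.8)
The plan is to verify the defining universal property directly, assembling the structural facts established immediately above. Recall that $\mathcal{C}$ admits the Eilenberg-Moore construction for comonads exactly when the inclusion $\mathcal{C} \rightarrow \mathrm{Comnd}\mathcal{C}$, $E \mapsto (E,1)$, has a right biadjoint; equivalently, when every comonad $(E,G)$ in $\mathcal{C}$ has an Eilenberg-Moore object, namely an object corepresenting $G$-coalgebras pseudonaturally. I would take as this object the comonoid $G$, together with the comonoid map $\epsilon : G \rightarrow E$ and the universal coalgebra $(u,\gamma) : (G,1) \rightarrow (E,G)$ produced from the adjunction $\epsilon_\ast \dashv \epsilon^\ast$, exactly as displayed just before the statement. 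It then remains to produce, for every comonoid $X$, an equivalence
\[
\mathcal{C}(X,G) \;\simeq\; \mathrm{Comnd}\mathcal{C}\big((X,1),(E,G)\big)
\]
pseudonatural in $X$.

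First I would unpack the right-hand side. A comonad map $(k,\kappa):(X,1)\rightarrow(E,G)$ consists of a comodule $k : X \spanarr E$ together with a $2$-cell $\kappa : k \Rightarrow G\circ k$ that is coassociative and counital for the comultiplication and counit of the comonad $G$; thus $\kappa$ is a coaction of the comonad $G$ on $k$, while a comonad map transformation is exactly a $2$-cell of comodules compatible with these coactions. The comparison functor is composition with the universal coalgebra: it sends a comodule $m : X \spanarr G$ to the comonad map whose underlying $1$-cell is $u \circ m$ and whose coaction is $\gamma$ whiskered by $m$. Its putative inverse sends $(k,\kappa)$ to the comodule $X \spanarr G$ obtained by using $\kappa$ to lift the right $E$-comodule structure of $k$ along $\epsilon$ to a right $G$-comodule structure.

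The crux, and the step I expect to be the main obstacle, is proving that these two assignments are mutually inverse equivalences of categories. This is a comonadic descent statement: a right $E$-comodule carrying a coassociative counital coaction of the comonad $G$ is the same datum as a right comodule over the comonoid $G$, whose comultiplication is $G \to^{\delta} G\otimes_E G \to^{i} G\otimes G$. The standing hypothesis on $\mathcal{V}$ is precisely what makes this succeed: since each $X\otimes -$ preserves equalizers of coreflexive pairs, the relative tensor products $\otimes_E$ and $\otimes_G$ computing composition in $\mathcal{C}$ are given by coreflexive equalizers that are stable under tensoring, so the coaction $\kappa$ and the $G$-comodule structure determine one another and both round-trips reduce to identities modulo the coherence isomorphisms of $\mathcal{C}$. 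The bookkeeping of corestrictions needed here is routine once stability of the equalizers is known, and parallels the cofree-comodule arguments of Lemmas \ref{l1} and \ref{l2}.

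Finally I would record pseudonaturality in $X$. For a comodule between vertices the two comparison functors commute with restriction up to the canonical associativity constraints of the bicategory, and any map of comonads $(E,G)\rightarrow(E',G')$ intertwines them compatibly with the lifting squares \eqref{lift} recalled at the outset. Assembling these equivalences over all $X$ yields the desired right biadjoint, whence $\mathcal{C}$ admits the Eilenberg-Moore construction for comonads.
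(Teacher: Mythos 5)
Your proposal is correct and takes essentially the same approach as the paper: the paper likewise identifies the Eilenberg--Moore object of a comonad $(E,G)$ with the comonoid $G$ supplied by the comonad--comonoid correspondence established just before the proposition, with universal coalgebra given by the adjunction $\epsilon_\ast \dashv \epsilon^\ast$ and the comonad comultiplication $\delta$. The only difference is one of completeness: the paper asserts this construction and leaves the verification of the universal property implicit, whereas you sketch explicitly the equivalence between $G$-coalgebras $(k,\kappa)$ over $(E,G)$ and comodules from $X$ to $G$, which is a faithful filling-in of what the paper omits.
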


It follows from Proposition \ref{monem} that $\mathrm{Mon}\mathcal{C}$ also
admits the comonad Eilenberg-Moore construction. To wit, given a
monoidal structure on a comonad $G$, the comonoid $G$ becomes a monoidale in $\mathcal{C}$, while $\epsilon_\ast : G \spanarr
E$ becomes a strong monoidal morphism.

The correspondence between comonads and comonoid maps lifts to a
correspondence between monoidal comonads on the monoidale $E$ and monoidales $G$ in $\mathcal{C}$ together with a
comonoid map $G \rightarrow E$ such that $\epsilon_\ast : G \spanarr E$ is a
strong monoidal morphism.

\section{Quantum Categories}\label{secqc} 

Essentially following \cite{quantumcat} we define a quantum
category in $\mathcal{V}$. In
\cite{quantumcat} it was shown that a quantum category in $Set$ is the same as
a small category and a quantum category in $Vect^{\mathrm{op}}$ is the same as a
bialgebroid \cite{Takeuchi}, \cite{Lu}, \cite{Xu}. Most of the section after
the definition is dedicated to proving Statement \ref{statement}, which
translates that definition to a set of axioms close to the definitions of
bialgebroid in the literature.

\begin{definition} A quantum graph $(C, A)$ in $\mathcal{V}$
consists of a comonoid $C$ and a comonad $A$ on $C^o\otimes C$.

$$\bfig
\Vtriangle/@{>}|-*@{|}`@{>}|-*@{|}`@{>}|-*@{|}/[C^o\otimes C`C^o\otimes C`C^o\otimes C;A`A`A]
\morphism(500,350)/=>/<0, -100>[`;\delta]
\morphism(1000,500)|b|/{@{>}@/^32pt/}/<-500,-500>[C^o\otimes C`C^o\otimes
C;C\otimes C]
\place(963,87)[\dline]
\morphism(820,190)/=>/<90, -90>[`;\epsilon] 
\efig$$ 
\end{definition} 

\begin{definition}\label{def} A quantum category $(C, A)$ in
$\mathcal{V}$ consists of a comonoid $C$ together with a monoidal comonad $A$ on
$C^o\otimes C$.
\end{definition}

A quantum category has an underlying quantum graph and 2-cells 

\begin{equation}
\bfig
\square/@{>}|-*@{|}`@{>}|-*@{|}`@{>}|-*@{|}`@{>}|-*@{|}/<1000, 500>[C^o\otimes
C\otimes C^o\otimes C`C^o\otimes C\otimes C^o\otimes C`C^o\otimes C`C^o\otimes C;A\otimes A`p`p`A] 
\morphism(500,250)/=>/<0,-100>[`;\mu_2]
\Atriangle(1800,0)/@{>}|-*@{|}`@{>}|-*@{|}`@{>}|-*@{|}/<500,500>[I`C^o\otimes
C`C^o\otimes C;j`j`A] 
\morphism(2300,250)/=>/<0,-100>[`;\mu_0]
\efig
\label{mu}
\end{equation}

\noindent which make $A$ into a
monoidal morphism and both of which are comonad maps.

By Section \ref{sec4}, a quantum graph amounts to comonoids $C$, $A$ and a
comonoid map $\epsilon : A \rightarrow C^o\otimes C$. The latter itself amounts
to comonoid maps $s : A \rightarrow C^o$ and $t : A \rightarrow C$ satisfying:

$$\bfig \Ctriangle/<-``>/<250,250>[A\otimes A`A`A\otimes A;\delta``\delta]
\square(250,0)/->``>`->/<500,500>[A\otimes A`C \otimes C`A\otimes A`C\otimes
C;s\otimes t``c`t\otimes s] \efig$$

\noindent By $s$ and $t$ we can express $\epsilon$ as

$$A \to^\delta A\otimes A \to^{s\otimes t} C\otimes C\text{.}$$

\noindent $C$ is called the object of objects of the quantum graph. $A$ is
called the object of arrows. The maps $s$ and $t$ are called the source and the target maps respectively.

We regard $A$ as a comodule $C \spanarr C$ using the right $C^o\otimes
C$-coaction on it. In terms of $s$ and $t$ left and right $C$-coactions on $A$ are

$$A \to^{\delta_3} A\otimes A \otimes A \to^{1\otimes s\otimes \epsilon}
A\otimes C \to^{c^{-1}} C\otimes A\quad\text{and}$$

$$A \to^{\delta_3} A\otimes A \otimes A \to^{1\otimes \epsilon\otimes t}
A\otimes C.$$

\noindent The tensor product $H = A\otimes_C A$ of $A$ with itself over $C$ is
called the object of composable arrows for the quantum graph. It is defined by
the equalizer

$$H \to^i A\otimes A \two^{1\otimes(c^{-1}(1\otimes s\otimes
\epsilon)\delta_3)}_{(1\otimes \epsilon\otimes t)\delta_3)\otimes 1} A\otimes
C \otimes A\text{.}$$

\noindent The composite comodule

$$\bfig
\morphism/@{>}|-*@{|}/<1000,0>[C^o\otimes C\otimes C^o\otimes C`C^o\otimes C\otimes C^o\otimes C;A\otimes A]
\morphism(1000,0)/@{>}|-*@{|}/<800,0>[C^o\otimes C\otimes C^o\otimes
C`C^o\otimes C;p]
\efig
$$

\noindent is $H$ with left $C^o\otimes C\otimes C^o\otimes C$-coaction the
unique map $\delta_l : H \rightarrow C\otimes C\otimes C\otimes C\otimes H$ making

$$\bfig
\square<800,500>[H`C\otimes C\otimes C\otimes C\otimes H`A\otimes A`C\otimes
C\otimes C\otimes C\otimes A\otimes A;\delta_l`i`1\otimes1\otimes1\otimes1\otimes i`\delta_l] \efig$$

\noindent commute and right $C^o\otimes C$-coaction the unique map
$\delta_r : H \rightarrow C\otimes C\otimes H$ making

$$\bfig
\square<700,500>[H`H\otimes C\otimes C`A\otimes A`A\otimes A\otimes C\otimes
C;\delta_r`i`i\otimes1\otimes1`\delta_r] \efig$$

\noindent commute. We regard $H$ as a comodule $C \spanarr C$ using the
right $C^o\otimes C$-coaction on it.

The map $\nu_2 : H \to A$ determined by the pasting composite

\begin{equation}\label{nu2}
\bfig
\square/@{>}|-*@{|}`@{>}|-*@{|}`@{>}|-*@{|}`@{>}|-*@{|}/<600,500>[I`C^o\otimes
C\otimes C^o\otimes C`I`C^o\otimes C;\epsilon^\ast`1`p`\epsilon^\ast] 
\square(600,0)/@{>}|-*@{|}``@{>}|-*@{|}`@{>}|-*@{|}/<1000,500>[C^o\otimes
C\otimes C^o\otimes C`C^o\otimes C\otimes C^o\otimes C`C^o\otimes C`C^o\otimes C;A\otimes A``p`A] 
\morphism(1100,300)/=>/<0,-100>[`;\mu_2]
\morphism(300,300)/=>/<0,-100>[`;\xi_2]
\efig
\end{equation}

\noindent is called the composition map of the quantum category. The map $\nu_0
: C \to H$ determined by the pasting composite 

\begin{equation}\label{nu0}
\bfig
\dtriangle(0,0)|llb|/@{>}|-*@{|}`@{>}|-*@{|}`@{>}|-*@{|}/[I`I`C^o\otimes C;1`j`\epsilon^\ast]
\btriangle(500,0)/`@{>}|-*@{|}`@{>}|-*@{|}/[I`C^o\otimes C`C^o\otimes C;`j`A]
\morphism(350,250)/=>/<0,-100>[`;\xi_0]
\morphism(650,250)/=>/<0,-100>[`;\mu_0]
\efig
\end{equation}

\noindent  is called the unit map of the quantum category. 

\begin{lemma}
The 2-cells $\mu_2$ and $\mu_0$ determine a monoidal morphism structure on the
comodule $A : C^o\otimes C \spanarr C^o\otimes C$ if and only if $(A, \nu_2,
\nu_0)$ is a monoid in $\mathcal{C}(C, C)$.
\end{lemma}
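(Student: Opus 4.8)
The plan is to realise both structures as images of the single injective family of Lemma~\ref{lmon}, so that the stated equivalence reduces to a term-by-term matching of axioms. By inspection of \eqref{nu2}, the passage $\mu_2\mapsto\nu_2$ is exactly the $n=2$ function of Lemma~\ref{lmon} (paste with $\xi_2$, precompose with $\epsilon^\ast$) followed by the monoidal equivalence \eqref{CC}; likewise, by \eqref{nu0}, the passage $\mu_0\mapsto\nu_0$ is its $n=0$ instance, which the proof of Lemma~\ref{lmon} identifies with Lemma~\ref{l1}. Both functions being injective, $(\mu_2,\mu_0)$ and $(\nu_2,\nu_0)$ determine one another, and it remains only to check that the three monoidal--morphism axioms for $(A,\mu_2,\mu_0)$ correspond bijectively to the associativity law and the two unit laws of the monoid $(A,\nu_2,\nu_0)$ in $\mathcal{C}(C,C)$.

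First I would treat associativity. The associativity axiom for the monoidal morphism $A$ is an equality of two pasting composites, each a $2$-cell from $p_3(A\otimes A\otimes A)$ to $A\circ p_3$ in $\mathcal{C}((C^o\otimes C)^{\otimes 3},C^o\otimes C)$. I apply the $n=3$ instance of Lemma~\ref{lmon}, pasting with $\xi_3$ and restricting along $\epsilon^\ast$. Using the decomposition of $\xi_3$ into copies of $\xi_2$ recorded in the proof of Lemma~\ref{lmon}, together with the fact that \eqref{CC} is a \emph{monoidal} equivalence sending the multiplication $p$ to composition in $\mathcal{C}(C,C)$, the two composites are carried to $\nu_2(\nu_2\otimes_C A)$ and $\nu_2(A\otimes_C\nu_2)$, the two sides of the monoid associativity law as maps $A\otimes_C A\otimes_C A\to A$. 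Since the $n=3$ function is injective, the two $2$-cells upstairs agree if and only if their images do, so monoidal associativity holds exactly when monoid associativity does.

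The two unit laws are handled in the same spirit, now over $C^o\otimes C$. Each unit axiom of the monoidal morphism relates $\mu_2$ with $\mu_0$ and the unit $j$; after reducing $p(j\otimes A)$ and $p(A\otimes j)$ by the unitors of the monoidale, both sides become $2$-cells into $A\circ p_1=A$, to which I apply the $n=1$ function of Lemma~\ref{lmon} (for which the proof notes that pasting with $\xi_1$ merely forgets the left $C^o\otimes C$-structure). Combined with the definition \eqref{nu0} of $\nu_0$ from $\xi_0$ and $\mu_0$, these axioms are sent precisely to $\nu_2(\nu_0\otimes_C A)=1_A=\nu_2(A\otimes_C\nu_0)$, the left and right unit laws of the monoid; injectivity of the $n=1$ function again promotes equivalence of the restricted equations to equivalence of the originals.

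The main obstacle is the bookkeeping hidden in the middle steps: one must verify explicitly that pasting a monoidal--morphism axiom with $\xi_n$ and restricting along $\epsilon^\ast$ yields \emph{precisely} the corresponding monoid axiom, rather than something merely equivalent to it up to coherence. This rests on two compatibilities, both already available: that the cells $\xi_n$ of \eqref{xin} are compatible with $p$ (this is $\epsilon^\ast$ being a monoidal morphism via $\xi_2=1\otimes\epsilon\otimes1$ and $\xi_0=\delta$, with $\xi_n$ assembled from these), and that \eqref{CC} respects the monoidal structures on both sides. Granting these, Lemma~\ref{lmon} does the decisive work, reducing every needed equality of $2$-cells over $(C^o\otimes C)^{\otimes n}$ to the single corresponding equality in $\mathcal{C}(C,C)$, so that no separate verification of the monoidal--morphism axioms is required once the monoid axioms are in hand.
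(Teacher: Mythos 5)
Your proposal is correct and takes essentially the same approach as the paper: its two ingredients --- the injectivity supplied by Lemma \ref{lmon} and the monoidal equivalence \eqref{CC} --- are precisely what the paper's own (much shorter) proof invokes. Your axiom-by-axiom matching (associativity via the $n=3$ instance, units via $n=1$, the data itself via $n=2$ and $n=0$) is just the unpacked form of the paper's statement that $\mu_2$ and $\mu_0$ give a monoidal morphism structure on $A$ if and only if the pasting composites \eqref{nu2} and \eqref{nu0} give one on $\epsilon^\ast A$, which by \eqref{CC} holds if and only if $(A,\nu_2,\nu_0)$ is a monoid in $\mathcal{C}(C,C)$.
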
   

\begin{proof}
It follows from Lemma \ref{lmon} that $\mu_2$ and $\mu_0$ determine a monoidal
morphism structure on $A$ if and only if the pasting composites \eqref{nu2} and
\eqref{nu0} determine a monoidal morphism structure on $\epsilon^\ast A$.
Using the equivalence $\mathcal{C}(I, C^o\otimes C) \simeq \mathcal{C}(C, C)$,
the 2-cells \eqref{nu2} and \eqref{nu0} determine a monoidal morphism structure
on $\epsilon^\ast A$ if and only if $(A, \nu_2, \nu_0)$ is a monoid in
$\mathcal{C}(C, C)$.
 \end{proof}

Consider the pasting composite 2-cell

\begin{equation}\label{gr}
\bfig
\morphism(0,500)/@{>}|-*@{|}/<1000,0>[C^o\otimes C\otimes C^o\otimes C`C^o\otimes C\otimes C^o\otimes C;A\otimes A]
\square(1000,0)/@{>}|-*@{|}`@{>}|-*@{|}`@{>}|-*@{|}`@{>}|-*@{|}/<1000,500>[C^o\otimes C\otimes C^o\otimes C`C^o\otimes C\otimes C^o\otimes C`C^o\otimes C`C^o\otimes C;A\otimes A`p`p`A]
\morphism(0,500)/{@{>}@/^30pt/}/<2000,0>[C^o\otimes C\otimes C^o\otimes C`C^o\otimes C\otimes C^o\otimes C;A\otimes A]

\morphism(1000,700)/=>/<0,-100>[`;\delta]
\morphism(1500,300)/=>/<0,-100>[`;\mu_2]
\efig
\end{equation}

\noindent It is a map $H \rightarrow H\otimes_{(C^o\otimes C)}A$. Let $\gamma_r$
be the composite of this with the canonical injection $H\otimes_{(C^o\otimes
C)}A \rightarrow H\otimes A$. 

\begin{lemma}\label{lmud} The 2-cell $\mu_2$ is a comonad morphism if and only
if the following diagrams commute

\begin{equation}
\bfig
\square[H`A`H\otimes A`A\otimes A;\nu_2`\gamma_r`\delta`\nu_2\otimes 1]
\efig
\label{mud}
\end{equation}

\begin{equation}
\bfig
\square(1500,0)[H`A`A\otimes A`I;\nu_2`\iota`\epsilon`\epsilon\otimes\epsilon]
\efig
\label{mue}
\end{equation}

\end{lemma}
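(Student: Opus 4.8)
The plan is to unpack the assertion that $\mu_2$ is a comonad morphism into the two comonad-map axioms of Section 2 and then to show that, after transport through the apparatus of Section \ref{bc}, these become precisely \eqref{mud} and \eqref{mue}. Saying that $\mu_2$ is a comonad morphism means exactly that $(p,\mu_2)\colon(C^o\otimes C\otimes C^o\otimes C, A\otimes A)\to(C^o\otimes C, A)$ is a map of comonads. Writing composites right to left, this amounts to the comultiplication axiom
$$\big(A\mu_2\big)\cdot\big(\mu_2(A\otimes A)\big)\cdot\big(p\,\delta_{A\otimes A}\big)=\big(\delta_A\,p\big)\cdot\mu_2$$
of $2$-cells $p(A\otimes A)\Rightarrow AAp$, together with the counit axiom $p\,\epsilon_{A\otimes A}=(\epsilon_A\,p)\cdot\mu_2$ of $2$-cells $p(A\otimes A)\Rightarrow p$, where $\delta_{A\otimes A}$ and $\epsilon_{A\otimes A}$ denote the comultiplication and counit of the tensor comonad $A\otimes A$.

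First I would reduce each of these equalities of $2$-cells in $\mathcal{C}((C^o\otimes C)^{\otimes 2}, C^o\otimes C)$ to an equality of $2$-cells out of $I$. Both sides of each axiom have exactly the shape treated in Lemma \ref{lmon} with $n=2$, so pasting with $\xi_2$ along $\epsilon^\ast$ is injective; hence each axiom holds if and only if the corresponding pasted $2$-cells in $\mathcal{C}(I, C^o\otimes C)$ agree. This is the step that lets the comonoid-level data take over from the bicategorical data, and it supplies both implications at once, since the reduction is an injection.

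Next I would transport these restricted equalities across the monoidal equivalence $\mathcal{C}(I, C^o\otimes C)\simeq\mathcal{C}(C, C)$ of \eqref{CC}. Under it the pasting composite \eqref{nu2} becomes the composition map $\nu_2$, while the comonad comultiplication and counit of $A$ become, by the description in Section \ref{sec4}, the comonoid comultiplication $\delta\colon A\to A\otimes A$ and the comonoid counit $\epsilon\colon A\to I$ of $A$. The right-hand side $(\delta_A\,p)\cdot\mu_2$ of the comultiplication axiom thus becomes $\delta\circ\nu_2$, the top-then-right leg of \eqref{mud}. For the left-hand side I would use that the pasting composite \eqref{gr}, restricted along $\epsilon^\ast$ and pushed through \eqref{CC}, is by construction the map $H\to H\otimes_{(C^o\otimes C)}A$ underlying $\gamma_r$, so that the outer whiskering $A\mu_2$ contributes the remaining factor $\nu_2\otimes 1$; together these give $(\nu_2\otimes 1)\circ\gamma_r$, the left-then-bottom leg of \eqref{mud}. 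The counit axiom transports in the same fashion: $p\,\epsilon_{A\otimes A}$ becomes $(\epsilon\otimes\epsilon)\circ\iota$, where $\iota\colon H\hookrightarrow A\otimes A$ is the defining injection, and $(\epsilon_A\,p)\cdot\mu_2$ becomes $\epsilon\circ\nu_2$, which is exactly \eqref{mue}.

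The main obstacle is the bookkeeping in this last step: one must check carefully that the single $\mu_2$ together with $\delta_{A\otimes A}$ appearing in \eqref{gr} genuinely reconstructs the map underlying $\gamma_r$ through the cofree-comodule reconstruction of Lemma \ref{l1}, and that the outer whiskering $A\mu_2$ corresponds to $\nu_2\otimes 1$ rather than to some reassociated or braided variant. Once this dictionary between the abstract comonad-map data and the concrete maps $\nu_2,\gamma_r,\delta,\epsilon,\iota$ is fixed, both squares are obtained simply by matching sources and targets, and the converse direction is automatic because every reduction used is either the injection of Lemma \ref{lmon} or the equivalence \eqref{CC}.
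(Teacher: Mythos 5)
Your overall strategy coincides with the paper's for the first two steps: you unpack the assertion into the two comonad-map axioms for $(p,\mu_2)$, and you invoke the injectivity of pasting with $\xi_2$ (Lemma \ref{lmon}, $n=2$) to reduce each axiom to an equality of 2-cells out of $I$. Your handling of the comultiplication axiom, leading to \eqref{mud}, is also essentially the paper's, including the care you take that $\gamma_r$ is the composite of the 2-cell \eqref{gr} with the canonical injection, so that composing with monic injections lets the comonad-level equation pass to the comonoid-level diagram.

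The gap is in your counit step. You assert that ``$(\epsilon_A\,p)\cdot\mu_2$ becomes $\epsilon\circ\nu_2$'' and ``$p\,\epsilon_{A\otimes A}$ becomes $(\epsilon\otimes\epsilon)\circ\iota$'', i.e.\ that the counit axiom transports directly to \eqref{mue}. But the counit of the comonad $A$ on $C^o\otimes C$ is not the comonoid counit $\epsilon : A \to I$; by the correspondence of Section \ref{sec4} it is the 2-cell $A \Rightarrow 1_{C^o\otimes C}$ whose underlying map in $\mathcal{V}$ is $(s\otimes t)\delta : A \to C\otimes C$, and similarly for $A\otimes A$. Hence the direct translation of the counit axiom is an equality of maps $H \to C\otimes C$, namely diagram \eqref{2s}, $(s\otimes t)\,\delta\,\nu_2 = (s\otimes t)\,i$, and not \eqref{mue}, whose legs land in $I$. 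The two equations live in different hom-sets, so their equivalence is a genuine additional step, which the paper supplies: the implication from \eqref{mue} to \eqref{2s} is the paper's two-square diagram, whose left square uses that $\nu_2$ is a right $C^o\otimes C$-comodule map and whose right square uses \eqref{mue}; the converse follows by composing \eqref{2s} with $\epsilon\otimes\epsilon : C\otimes C \to I$ and using that $s$ and $t$ are comonoid maps. Neither direction is ``matching sources and targets'' — the first uses a non-formal comodule property of $\nu_2$ — so your proof as written omits a necessary part of the argument, precisely the part that makes the cleaner-looking axiom \eqref{mue} (rather than \eqref{2s}) appear in the statement.
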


\begin{proof}
The map $\mu_2$ is a comonad map if:

$$\bfig
\square/@{>}|-*@{|}`@{>}|-*@{|}``@{>}|-*@{|}/<1000,500>[C^o\otimes C\otimes C^o\otimes C`C^o\otimes C\otimes C^o\otimes C`C^o\otimes C`C^o\otimes C;A\otimes A`p``A]
\square(1000,0)/@{>}|-*@{|}`@{>}|-*@{|}`@{>}|-*@{|}`@{>}|-*@{|}/<1000,500>[C^o\otimes C\otimes C^o\otimes C`C^o\otimes C\otimes C^o\otimes C`C^o\otimes C`C^o\otimes C;A\otimes A`p`p`A]
\morphism(0,500)/{@{>}@/^30pt/}/<2000,0>[C^o\otimes C\otimes C^o\otimes C`C^o\otimes C\otimes C^o\otimes C;A\otimes A]
\place(1000,760)[\line]
\morphism(1500,300)/=>/<0,-100>[`;\mu_2]
\morphism(500,300)/=>/<0,-100>[`;\mu_2]
\morphism(1000,700)/=>/<0,-100>[`;\delta]
\efig$$

$$=$$ 

$$\bfig
\square/`@{>}|-*@{|}``@{>}|-*@{|}/<1000,500>[C^o\otimes C\otimes C^o\otimes C``C^o\otimes C`C^o\otimes C;`p``A]
\square(1000,0)/``@{>}|-*@{|}`@{>}|-*@{|}/<1000,500>[`C^o\otimes C\otimes C^o\otimes C`C^o\otimes C`C^o\otimes C;``p`A]
\morphism(0,500)/{@{>}@/^30pt/}/<2000,0>[C^o\otimes C\otimes C^o\otimes C`C^o\otimes C\otimes C^o\otimes C;A\otimes A]
\place(1000,760)[\line]
\morphism/{@{>}@/^30pt/}/<2000,0>[C^o\otimes C`C^o\otimes C;A]
\place(1000,260)[\line]
\morphism(1000,200)/=>/<0,-100>[`;\delta]
\morphism(1000,550)/=>/<0,-100>[`;\mu_2]
\efig$$

$$
\bfig
\morphism(500,650)/=>/<0,-100>[`;\epsilon]
\morphism(0,500)/{@{>}@/^25pt/}/<1000,0>[C^o\otimes C\otimes C^o\otimes C`C^o\otimes C\otimes C^o\otimes C;A\otimes A]
\place(500,710)[\line]
\square|blrb|/@{>}|-*@{|}`@{>}|-*@{|}`@{>}|-*@{|}`@{>}|-*@{|}/<1000,500>[C^o\otimes C\otimes C^o\otimes C`C^o\otimes C\otimes C^o\otimes C`C^o\otimes C`C^o\otimes C;C^{\otimes4}`p`p`C\otimes C]
\place(530,250)[\cong]
\efig
$$

$$=$$

$$
\bfig
\morphism(500,500)/=>/<0,-100>[`;\mu_2]
\morphism(500,150)/=>/<0,-100>[`;\epsilon]
\morphism(0,500)/{@{>}@/^25pt/}/<1000,0>[C^o\otimes C\otimes C^o\otimes C`C^o\otimes C\otimes C^o\otimes C;A\otimes A]
\place(500,710)[\line]
\morphism/{@{>}@/^25pt/}/<1000,0>[C^o\otimes C`C^o\otimes C;A]
\place(500,210)[\line]
\square/`@{>}|-*@{|}`@{>}|-*@{|}`@{>}|-*@{|}/<1000,500>[C^o\otimes C\otimes C^o\otimes C`C^o\otimes C\otimes C^o\otimes C`C^o\otimes C`C^o\otimes C;
`p`p`C\otimes C]
\efig
$$

\noindent By Lemma \ref{lmon} these equalites between pasting diagrams are
equivalent to the following equalities obtained by suitably pasting to them the 2-cell
\eqref{xin} for $n = 2$.

$$\bfig
\square/@{>}|-*@{|}`@{>}|-*@{|}``@{>}|-*@{|}/<1000,500>[I`C^o\otimes C\otimes C^o\otimes C`I`C^o\otimes C;A\otimes A`I``A]
\square(1000,0)/@{>}|-*@{|}`@{>}|-*@{|}`@{>}|-*@{|}`@{>}|-*@{|}/<1000,500>[C^o\otimes C\otimes C^o\otimes C`C^o\otimes C\otimes C^o\otimes C`C^o\otimes C`C^o\otimes C;A\otimes A`p`p`A]
\morphism(0,500)/{@{>}@/^30pt/}/<2000,0>[I`C^o\otimes C\otimes C^o\otimes C;A\otimes A]
\place(1000,760)[\line]
\morphism(1500,300)/=>/<0,-100>[`;\mu_2]
\morphism(500,300)/=>/<0,-100>[`;\nu_2]
\morphism(1000,700)/=>/<0,-100>[`;\delta]
\efig$$

$$
=
$$

$$\bfig
\square/`@{>}|-*@{|}``@{>}|-*@{|}/<1000,500>[I```C^o\otimes C;`I``A]
\square(1000,0)/``@{>}|-*@{|}`@{>}|-*@{|}/<1000,500>[`C^o\otimes C\otimes C^o\otimes C`C^o\otimes C`C^o\otimes C;``p`A]
\morphism(0,500)/{@{>}@/^30pt/}/<2000,0>[I`C^o\otimes C\otimes C^o\otimes C;A\otimes A]
\place(1000,760)[\line]
\morphism/{@{>}@/^30pt/}/<2000,0>[I`C^o\otimes C;A]
\place(1000,260)[\line]
\morphism(1000,200)/=>/<0,-100>[`;\delta]
\morphism(1000,500)/=>/<0,-100>[`;\nu_2]
\efig$$

$$
\bfig
\morphism(500,300)/=>/<0,-100>[`;\xi_0]
\morphism(500,650)/=>/<0,-100>[`;\epsilon]
\morphism(0,500)/{@{>}@/^25pt/}/<1000,0>[I`C^o\otimes C\otimes C^o\otimes C;A\otimes A]
\place(500,710)[\line]
\square|blrb|/@{>}|-*@{|}`@{>}|-*@{|}`@{>}|-*@{|}`@{>}|-*@{|}/<1000,500>[I`C^o\otimes C\otimes C^o\otimes C`I`C^o\otimes C;
C^{\otimes4}`I`p`C\otimes C]
\place(1400,250)[=]
\morphism(2200,500)/=>/<0,-100>[`;\nu_2]
\morphism(2200,150)/=>/<0,-100>[`;\epsilon]
\morphism(1700,500)/{@{>}@/^25pt/}/<1000,0>[I`C^o\otimes C\otimes C^o\otimes C;A\otimes A]
\place(2200,710)[\line]
\morphism(1700,0)/{@{>}@/^25pt/}/<1000,0>[I`C^o\otimes C;A]
\place(2200,210)[\line]
\square(1700,0)/`@{>}|-*@{|}`@{>}|-*@{|}`@{>}|-*@{|}/<1000,500>[I`C^o\otimes C\otimes C^o\otimes C`I`C^o\otimes C;`I`p`A]
\efig
$$

\noindent It is easy to translate these equalities into commutative diagrams.
The first of them translates to \eqref{mud}. The second translates to the
commutativity of

\begin{equation}
\bfig
\square/>`<-``/[A`A\otimes A`H`;\delta`\nu_2``]
\square(500,0)/>``<-`/[A\otimes A`C\otimes C``A\otimes A;s\otimes t``s\otimes t`]
\morphism<1000,0>[H`A\otimes A;i]
\efig
\label{2s}
\end{equation}

\noindent This reduces to the commutativity of \eqref{2s}. Indeed, in the
diagram

$$
\bfig
\square/>`<-`<-`>/[A`A\otimes C\otimes C`H`H\otimes C\otimes C;\delta_l`\nu_2`\nu_2\otimes1\otimes1`\delta_l]
\square(500,0)/>``<-`>/<800,500>[A\otimes C\otimes C`C\otimes C`H\otimes C\otimes C`A\otimes A\otimes C\otimes C;\epsilon\otimes1\otimes1``\epsilon\otimes\epsilon\otimes1\otimes1`i\otimes1\otimes1]
\efig
$$

\noindent the left square commutes since $\nu_2$ is a right $C^o\otimes
C$-comodule map and the right square commutes given \eqref{mue}. A little calculation shows that the
outer part is exactly \eqref{2s}.
 \end{proof}

The 2-cell $\mu_0$ is a map $\mu_0 : C \rightarrow
C\otimes_{(C^o\otimes C)}A$. Let $\gamma_r$ be the
composite of this with the canonical injection $C\otimes_{(C^o\otimes C)}A
\rightarrow C\otimes A$.

\begin{lemma} The 2-cell $\mu_0$ is a comonad morphism if and only if the following diagrams commute 

\begin{equation}
\bfig
\square[C`A`C\otimes A`A\otimes A;\nu_0`\gamma_r`\delta`\nu_0\otimes 1]
\efig
\label{etad}
\end{equation}

\begin{equation}
\bfig
\Atriangle<400,400>[C`A`I;\nu_0`\epsilon`\epsilon]
\efig
\label{etae}
\end{equation} 

\end{lemma}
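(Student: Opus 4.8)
The argument is to run parallel to that of Lemma~\ref{lmud}, replacing $\mu_2$ by $\mu_0$ throughout. First I would apply the definition of a comonad map recalled in Section~2 to $(j, \mu_0) : (I, 1) \to (C^o\otimes C, A)$. Since the comonad on $I$ is trivial, its comultiplication and counit are identities, and the two comonad-map axioms collapse to the comultiplication condition
$$\Big(j \to^{\mu_0} Aj \to^{A\mu_0} AAj\Big) = \Big(j \to^{\mu_0} Aj \to^{\delta j} AAj\Big)$$
together with the counit condition
$$\Big(j \to^{\mu_0} Aj \to^{\epsilon j} j\Big) = 1_j\text{,}$$
both being equalities of $2$-cells in $\mathcal{C}(I, C^o\otimes C)$.

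Next I would paste each side of these two equalities with the $2$-cell $\xi_0$ of \eqref{xin} (its $n=0$ instance) and invoke the injectivity furnished by Lemma~\ref{lmon}, whose $n=0$ case rests on Lemma~\ref{l1}. Because pasting with $\xi_0$ is injective, the displayed equalities are equivalent to the equalities obtained after pasting, so nothing is lost in either direction. By the very definition \eqref{nu0} of $\nu_0$, pasting $\mu_0$ with $\xi_0$ yields $\nu_0$; transporting the pasted equalities through the monoidal equivalence \eqref{CC}, each becomes a statement in $\mathcal{C}(C, C)$ about the map $\nu_0 : C \to A$ together with the comultiplication $\delta$ and counit $\epsilon$ of $A$.

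It then remains to read these off as commutative diagrams in $\mathcal{V}$. Writing $\gamma_r : C \to C\otimes A$ for the composite of $\mu_0 : C \to C\otimes_{(C^o\otimes C)}A$ with the canonical injection into $C\otimes A$, I expect the comultiplication equality to translate directly into \eqref{etad}, and the counit equality into $\epsilon\nu_0 = \epsilon$, which is exactly the triangle \eqref{etae}. As in the passage from the counit clause to \eqref{2s} in Lemma~\ref{lmud}, the counit condition may first surface in a slightly enlarged form; a short chase using that $\nu_0$ is a right $C^o\otimes C$-comodule map, together with the coaction formulas \eqref{lr}, should then identify its outer boundary with \eqref{etae}.

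The step I expect to be the main obstacle is this middle one: transporting the two pasting-composite equalities through $\xi_0$, the defining composite \eqref{nu0}, and the equivalence \eqref{CC} with enough care to certify that exactly \eqref{etad} and \eqref{etae} emerge, rather than some variant with permuted or misplaced coactions. Once that bookkeeping is settled, both translations are routine, and the injectivity of Lemma~\ref{lmon} supplies the converse directions automatically.
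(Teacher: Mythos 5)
Your proposal follows the paper's own proof essentially step for step: the paper likewise writes out the two comonad-map conditions for $(j,\mu_0)$ as pasting equalities, converts them via the injectivity of Lemma \ref{lmon} (pasting with $\xi_0$) into conditions on $\nu_0$, reads the comultiplication condition off as \eqref{etad}, and reduces the counit condition---which first surfaces in the enlarged form \eqref{2s1}---to \eqref{etae} by exactly the chase you anticipate, using that $\nu_0$ is a right $C^o\otimes C$-comodule map. The only work you defer is the translation bookkeeping, which the paper itself dispatches just as tersely, so there is no genuine gap.
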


\begin{proof}
The map $\mu_0$ is a comonad morphism if:
$$
\bfig
\dtriangle/@{>}|-*@{|}``/[I`C^o\otimes C`;j``]
\btriangle(500,0)/`@{>}|-*@{|}`/[I``C^o\otimes C;`j`]
\Vtriangle(0,-250)/`@{>}|-*@{|}`@{<-}|-*@{|}/<500,250>[C^o\otimes C`C^o\otimes C`C^o\otimes C;`A`A]
\Atriangle(2000,0)/@{>}|-*@{|}`@{>}|-*@{|}`@{>}|-*@{|}/[I`C^o\otimes C`C^o\otimes C;j`j`]
\Vtriangle(2000,-250)/@{>}|-*@{|}`@{>}|-*@{|}`@{<-}|-*@{|}/<500,250>[C^o\otimes C`C^o\otimes C`C^o\otimes C;A`A`A]
\morphism(500,500)<0,-750>[I`C^o\otimes C;j]
\place(1500,250)[=]
\morphism(350,200)/=>/<0,-100>[`;\mu_0]
\morphism(650,200)/=>/<0,-100>[`;\mu_0]
\morphism(2500,300)/=>/<0,-100>[`;\mu_0]
\morphism(2500,-50)/=>/<0, -100>[`;\delta]
\efig
$$

$$
\bfig
\Atriangle|lra|/@{>}|-*@{|}`@{>}|-*@{|}`@{>}|-*@{|}/[I`C^o\otimes C`C^o\otimes C;j`j`A]
\morphism|b|/{@{>}@/^-25pt/}/<1000,0>[C^o\otimes C`C^o\otimes C;C\otimes C]
\place(500,-210)[\line]
\morphism(500,-50)/=>/<0,-100>[`;\epsilon]
\morphism(500,300)/=>/<0,-100>[`;\mu_0]
\place(1500,250)[=]
\morphism(2000,0)|b|/{@{>}@/^-25pt/}/<1000,0>[C^o\otimes C`C^o\otimes C;C\otimes C]
\place(2500,-210)[\line]
\Atriangle(2000,0)/@{>}|-*@{|}`@{>}|-*@{|}`/[I`C^o\otimes C`C^o\otimes C;j`j`]
\morphism(2500,100)/=>/<0,-100>[`;1_C]
\efig
$$

\noindent Using Lemma \ref{lmon} these are equivalent to the
equalities:

$$
\bfig
\dtriangle/>``/[I`I`;I``]
\btriangle(500,0)/`@{>}|-*@{|}`/[I``C^o\otimes C;`j`]
\Vtriangle(0,-250)/`@{>}|-*@{|}`@{>}|-*@{|}/<500,250>[I`C^o\otimes C`C^o\otimes C;`A`A]
\Atriangle(2000,0)[I`I`C^o\otimes C;I`j`]
\Vtriangle(2000,-250)/@{>}|-*@{|}`@{>}|-*@{|}`@{<-}|-*@{|}/<500,250>[I`C^o\otimes C`C^o\otimes C;A`A`A]
\morphism(500,500)<0,-750>[I`C^o\otimes C;j]
\place(1500,250)[=]
\morphism(350,200)/=>/<0,-100>[`;\nu_0]
\morphism(650,200)/=>/<0,-100>[`;\mu_0]
\morphism(2500,300)/=>/<0,-100>[`;\nu_0]
\morphism(2500,-50)/=>/<0, -100>[`;\delta]
\efig
$$

$$
\bfig
\Atriangle|lra|/@{>}|-*@{|}`@{>}|-*@{|}`@{>}|-*@{|}/[I`I`C^o\otimes C;I`j`A]
\morphism|b|/{@{>}@/^-25pt/}/<1000,0>[I`C^o\otimes C;C\otimes C]
\place(500,-210)[\line]
\morphism(500,-50)/=>/<0,-100>[`;\epsilon]
\morphism(500,300)/=>/<0,-100>[`;\nu_0]
\place(1500,250)[=]
\morphism(2000,0)|b|/{@{>}@/^-25pt/}/<1000,0>[I`C^o\otimes C;C\otimes C]
\place(2500,-210)[\line]
\Atriangle(2000,0)/@{>}|-*@{|}`@{>}|-*@{|}`/[I`I`C^o\otimes C;I`j`]
\morphism(2500,100)/=>/<0,-100>[`;1_C]
\efig
$$

\noindent The first of these translates to the commutativity of \eqref{etad}. The second translates to

\begin{equation}
\bfig
\square/>`>`<-`>/[A`A\otimes A`C`C\otimes C;\delta`\nu_0`s\otimes t`\delta]
\efig
\label{2s1}
\end{equation}

\noindent Which reduces to the commutativity of \eqref{2s1}. Indeed, in the
diagram

$$\bfig
\square/>`<-``>/[A`A\otimes A\otimes A`C`C\otimes C\otimes C;\delta_3`\nu_0``\delta_3]
\square(500,0)/>```>/<800,500>[A\otimes A\otimes A`A\otimes C\otimes C`C\otimes C\otimes C`C\otimes C\otimes C;s\otimes t```c\otimes 1]
\square(1300,0)/>`<-`<-`>/<800,500>[A\otimes C\otimes C`C\otimes C`C\otimes C\otimes C`C\otimes C;\epsilon\otimes1\otimes1`\nu_0\otimes1\otimes1`1`\epsilon\otimes1\otimes1]
\efig$$

\noindent the left square commutes since $\nu_0$ is a right $C^o\otimes
C$-comodule map, and the right square commutes given \eqref{etae}, while the outer part can be
seen to be \eqref{2s1}.
 \end{proof} 

Now we are in position to unpack Definition \ref{def}.

Start again with a quantum graph $(C, A)$. There is a unique map $\gamma_l : H
\to A\otimes A\otimes H$ making

\begin{equation}\label{gammal}
\bfig
\square/`>`>`>/<1000,500>[H`A\otimes A\otimes A\otimes A`A\otimes A\otimes H`A\otimes A\otimes A\otimes A;`\gamma_l`1\otimes c\otimes 1`1\otimes 1\otimes i]
\morphism(0,500)<400,0>[H`A\otimes A;i]
\morphism(400,500)<600,0>[A\otimes A`A\otimes A\otimes A\otimes A;\delta\otimes\delta]
\efig
\end{equation}

\noindent commute. By Lemma \ref{l1} the 2-cell $\mu_2$ is
determined by the map $\nu_2$. The condition of \ref{l1} says that
$\nu_2$ should respects the right coaction by $C^o\otimes C$ and the left
coactions by the first and the fourth terms in $C^o\otimes C\otimes C^o\otimes
C$ and satisfy \eqref{1}, which now becomes

\begin{equation}
H \to^{\gamma_l} A\otimes A\otimes H \two^{t\otimes\epsilon\otimes
 1}_{\epsilon\otimes s\otimes 1} C\otimes H \to^{1\otimes\nu_2} C\otimes
 A\text{.}
\label{1a}
\end{equation}

\noindent Using
\eqref{1a}, it can be shown that there exists a unque map $\gamma_r$ making

\begin{equation}\label{gammar}
\bfig
\square[H`A\otimes A\otimes H`H\otimes A`A\otimes A\otimes
A;\delta_l`\gamma_r`1\otimes 1\otimes\nu_2`i\otimes1]
\efig
\end{equation}

\noindent commute. This is the same as the map defined before by \eqref{gr}.
Observe that commutativity of the diagram \eqref{mud} in Lemma \ref{lmud}
implies that $\nu_2$ respects the left coaction by the first and the fourth terms in $C^o\otimes C\otimes C^o\otimes C$.

By Lemma \eqref{l2} the 2-cell $\mu_0$ is determined by the map $\nu_0$. The
condition of \eqref{l2} says that $\nu_0$ should respect the right $C^o\otimes
C$ coaction and satisfy \eqref{3}, which now becomes

\begin{equation}\label{ngr}
C \to^{\nu_0} A \two^{(s\otimes1)\delta}_{(t\otimes1)\delta}
C\otimes A\text{.}
\end{equation}

\noindent The common value of the two composites in \eqref{ngr} is the same as
the map $\gamma_r$ defined above. Observe that if $(A, \nu_2, \nu_0)$ is a
comonoid in $\mathcal{C}(C, C)$, then \eqref{ngr} follows from \eqref{1a}.

Assembling the established facts we obtain: 
 
\begin{statement}[B. Day, R. Street]\label{statement} Giving a quantum category
structure on a quantum graph $(C, A)$ is equivalent to giving maps $\nu_2 : H \rightarrow A$ and $\nu_0
: C \rightarrow A$ satisfying the following axioms:

Axiom 1: $(A, \nu_2, \nu_0)$ is a monoid in $\mathcal{C}(C, C)$.

Axiom 2: The diagram \eqref{1a}, in which the map $\gamma_l$ is defined by
\eqref{gammal}.

Axiom 3: The diagram \eqref{mud}, in which the map $\gamma_r$ is defined by
\eqref{gammar} using Axiom 2.

Axiom 4: The diagram \eqref{mue}.

Axiom 5:  The diagram \eqref{etad}, in which the map $\gamma_r$ is defined by
\eqref{ngr}.

Axiom 6: The diagram \eqref{etae}.
\endstatement
\end{statement}

For more clarity see the Appendix where these axioms are presented using
string diagrams.

By Section \ref{sec4}, a quantum category
structure on a quantum graph $(C, A)$ is the same as a monoidale structure on
$A$, such that $\epsilon : A \rightarrow C^o\otimes C$ is strong monoidal. 
In term of our data this monoidal structure on $A$ can be expressed as follows.
The multiplication $A\otimes A \spanarr A$ is $H$ with left
and right coactions the maps $\gamma_l : H \rightarrow A\otimes A\otimes H$ and
$\gamma_r : H \rightarrow H\otimes A$. The unit $I \spanarr A$ is $C$ with the
right coaction the map $\gamma_r : C \rightarrow C\otimes A$. The monoidale $A$
is the Eilenberg-Moore object of the comonad $A: C^o\otimes C \spanarr
C^o\otimes C$ in $\mathrm{MonComod}(\mathcal{V})$. Applying the
representable pseudofunctor $\mathrm{Mon}\mathcal{C}(I, -) :
\mathrm{Mon}\mathcal{C} \rightarrow \mathrm{Mon}Cat$ to the universal 2-cell
  
$$
\bfig
\Atriangle/@{>}|-*@{|}`@{>}|-*@{|}`@{>}|-*@{|}/<400,400>[A`C^o\otimes
C`C^o\otimes C;\epsilon_\ast`\epsilon_\ast`A] \morphism(400,250)/=>/<0,-100>[`;]
\efig
$$ 

\noindent we obtain an Eilenberg-Moore construction in the category of monoidal
categories and monoidal functors:

$$
\bfig
\Atriangle<400,400>[\mathrm{Comod}(I, A)`\mathcal{C}(I, C^o\otimes
C)`\mathcal{C}(I, C^o\otimes C);``A\circ -] 
\morphism(400,250)/=>/<0,-100>[`;]
\efig
$$ 

\noindent Using the equivalence \eqref{CC} we can transport the monoidal
comonad $A\circ -$ on the category $\mathcal{C}(I, C^o\otimes C)$ to a monoidal comonad
on the category $\mathcal{C}(C, C)$. Thus, a quantum category defines a monoidal
comonad on $\mathcal{C}(C, C)$, the Eilenberg-Moore object of which is the
category of the right $A$-comodules.
     
\section{The category of quantum categories}

Suppose that $f : C \to C'$ is a comonoid map. Let
$\alpha : f^\ast\circ f_\ast \to C$ and $\beta : C' \to f_\ast\circ f^\ast$ be
the unit and the counit of the adjunction $f^{\ast} \dashv f_\ast$ as in Section \ref{bc}. They are maps respectively in
$\mathcal{C}(C, C)$ and $\mathcal{C}(C', C')$. By biduality, from $\alpha$ we
get a map $\alpha : e \longrightarrow e\circ(f_\ast\otimes f^{\ast o})$ in
$\mathcal{C}(C\otimes C^o, I)$ and from $\beta$ we get a map $\beta :
(f_\ast\otimes f^{\ast o})\circ n \Rightarrow n$ in $\mathcal{C}(I, C'^o\otimes
C')$.

The comodule $f^{\ast o}\otimes f_\ast : C^o\otimes C
\spanarr C'^o\otimes C'$ is an opmonoidal morphism with structure
2-cells

$$\bfig
\square/@{>}|-*@{|}`@{<-}|-*@{|}`@{<-}|-*@{|}`@{>}|-*@{|}/<1300,500>[C\otimes
C^o`C'^o\otimes C'`C^o\otimes C\otimes C^o\otimes C`C'^o\otimes C'\otimes
C'^o\otimes C';f^{\ast o}\otimes f_\ast`p`p`f^{\ast o}\otimes f_\ast\otimes
f^{\ast o}\otimes f_\ast] \morphism(650,300)/=>/<0,-100>[`;\omega_2] 
\Vtriangle(2000,0)/@{>}|-*@{|}`@{<-}|-*@{|}`@{<-}|-*@{|}/[C^o\otimes
C`C'^o\otimes C'`I;f^{\ast o}\otimes f_\ast`j`j]
\morphism(2500,300)/=>/<0,-100>[`;\omega_0]
\efig
$$ 
 
\noindent defined to be $\omega_2 = f^{\ast o}\otimes\alpha\otimes f_\ast$ and
$\omega_0 = \beta$.

Suppose that $g : C \rightarrow C'$ is another comonoid map. The opmonoidal map
$f^{\ast o}\otimes f_\ast$ acts from the left on $f^{\ast o}\otimes g_\ast$ by

$$\bfig
\square/@{>}|-*@{|}`@{<-}|-*@{|}`@{<-}|-*@{|}`@{>}|-*@{|}/<1300,500>[C\otimes C^o`C'^o\otimes C'`C^o\otimes C\otimes C^o\otimes C`C'^o\otimes C'\otimes C'^o\otimes C';f^{\ast o}\otimes g_\ast`p`p`f^{\ast o}\otimes f_\ast\otimes f^{\ast o}\otimes g_\ast]
\morphism(650,300)/=>/<0,-100>[`;\lambda_l]
\efig
$$

\noindent defined as $\lambda_l = f^{\ast o}\otimes\alpha\otimes g_\ast$.
Similarly $g^{\ast o}\otimes g_\ast$ acts from the right on $f^{\ast o}\otimes
g_\ast$ with coaction 2-cell $\lambda_r$ defined similarlly.

\begin{definition} A map between quantum graphs $(\sigma, f) : (C, A)
\rightarrow (C', A')$ consists of a morphism of comonoids $f : C \rightarrow C'$ and a 2-cell

\begin{equation}
\bfig
\square/@{>}|-*@{|}`@{>}|-*@{|}`@{>}|-*@{|}`@{>}|-*@{|}/<600, 500>[C^o\otimes
C`C^o\otimes C`C'^o\otimes C'`C'^o\otimes C';A`f^{\ast o}\otimes f_\ast`f^{\ast o}\otimes f_\ast`A'] 
\morphism(300,300)/=>/<0, -100>[`;\sigma]
\efig
\label{square}
\end{equation}

\noindent such that $(f^{\ast o}\otimes f_\ast, \sigma)$ is a comonad map. 
\end{definition}

\begin{definition} A functor $(f, \sigma) : (C, A) \rightarrow (C', A')$ between
quantum categories is a map between the underlying quantum graphs such that the
2-cell $\sigma$ is a square. 
\end{definition}

In other words a quantum functor is an opmorphism of monoidal
comonads of the form $(f^{\ast o}\otimes f_\ast, \varphi) : (C, A) \to (C',
A')$. Here are the equalities that $\sigma$ must satisfy:
 
$$
\bfig
\Atriangle(0,400)|mab|/>``@{>}|-*@{|}/<500,400>[C^o\otimes C\otimes C^o\otimes C`C'^o\otimes C'\otimes C'^o\otimes C'`C'^o\otimes C'\otimes C'^o\otimes C';f^{\ast o}\otimes f_\ast\otimes f^{\ast o}\otimes f_\ast``A'\otimes A']
\Vtriangle(500,400)|aam|/@{>}|-*@{|}``>/<500,400>[C^o\otimes C\otimes C^o\otimes C`C^o\otimes C\otimes C^o\otimes C`C'^o\otimes C'\otimes C'^o\otimes C';A\otimes A``f^{\ast o}\otimes f_\ast\otimes f^{\ast o}\otimes f_\ast]
\Atriangle(1000,400)/`@{>}|-*@{|}`/<500,400>[C^o\otimes C\otimes C^o\otimes C`C'^o\otimes C'\otimes C'^o\otimes C'`C^o\otimes C;`p`]
\Vtriangle(0,0)/`@{>}|-*@{|}`/<500,400>[C'^o\otimes C'\otimes C'^o\otimes C'`C'^o\otimes C'\otimes C'^o\otimes C'`C'^o\otimes C';`p`]
\Atriangle(500,0)/`@{>}|-*@{|}`@{>}|-*@{|}/<500,400>[C'^o\otimes C'\otimes C'^o\otimes C'`C'^o\otimes C'`C'^o\otimes C';`p`A']
\Vtriangle(1000,0)/``@{>}|-*@{|}/<500,400>[C'^o\otimes C'\otimes C'^o\otimes C'`C^o\otimes C`C'^o\otimes C';``f^{\ast o}\otimes f_\ast]
\morphism(850,250)|b|/=>/<0, -100>[`;\mu]
\morphism(850,650)|b|/=>/<0, -100>[`;\sigma\otimes \sigma]
\morphism(1550,450)|b|/=>/<-100,-100>[`;\omega_2]
\efig
$$

$$=$$

$$
\bfig
\Atriangle(0,400)|maa|/>`@{>}|-*@{|}`/<500,400>[C^o\otimes C\otimes C^o\otimes
C`C'^o\otimes C'\otimes C'^o\otimes C'`C^o\otimes C;f^{\ast o}\otimes f_\ast\otimes f^{\ast o}\otimes f_\ast`p`] \Vtriangle(500,400)/@{>}|-*@{|}``/<500,400>[C^o\otimes C\otimes C^o\otimes C`C^o\otimes C\otimes C^o\otimes C`C^o\otimes C;A\otimes A``]
\Atriangle(1000,400)/`@{>}|-*@{|}`@{>}|-*@{|}/<500,400>[C^o\otimes C\otimes C^o\otimes C`C^o\otimes C`C^o\otimes C;`p`A]
\Vtriangle(0,0)/`@{>}|-*@{|}`@{>}|-*@{|}/<500,400>[C'^o\otimes C'\otimes C'^o\otimes C'`C^o\otimes C`C'^o\otimes C';`p`f^{\ast o}\otimes f_\ast]
\Atriangle(500,0)/``@{>}|-*@{|}/<500,400>[C^o\otimes C`C'^o\otimes C'`C'^o\otimes C';``A']
\Vtriangle(1000,0)/``@{>}|-*@{|}/<500,400>[C^o\otimes C`C^o\otimes C`C'^o\otimes C';``f^{\ast o}\otimes f_\ast]
\morphism(1300,250)|b|/=>/<0, -100>[`;\sigma]
\morphism(1300,650)|b|/=>/<0, -100>[`;\mu]
\morphism(550,450)|b|/=>/<-100,-100>[`;\omega_2]
\efig
$$

$$
\bfig
\square(400,0)|ammb|/@{>}|-*@{|}`>`>`@{>}|-*@{|}/<600, 500>[C^o\otimes C`C^o\otimes C`C'^o\otimes C'`C'^o\otimes C';A`f^{\ast o}\otimes f_\ast`f^{\ast o}\otimes f_\ast`A']
\morphism(700,325)/=>/<0, -100>[`;\sigma]
\morphism(0,900)/@{>}|-*@{|}/<300,-300>[I`;j]
\morphism(0,900)|l|/@{>}|-*@{|}/<300,-800>[I`;j]
\morphism(0,900)/@{>}|-*@{|}/<900,-300>[I`;j]
\morphism(400,700)/=>/<0, -100>[`;\eta]
\morphism(370,420)/=>/<-100, -100>[`;\omega_0]
\place(1350,400)[=]
\square(2000,0)|ammb|/``>`@{>}|-*@{|}/<600, 500>[`C^o\otimes C`C'^o\otimes C'`C'^o\otimes C';``f^{\ast o}\otimes f_\ast`A']
\morphism(1500,900)/@{>}|-*@{|}/<900,-800>[I`;j]
\morphism(1500,900)|l|/@{>}|-*@{|}/<400,-800>[I`;j]
\morphism(1500,900)/@{>}|-*@{|}/<900,-300>[I`;j]
\morphism(2000,325)/=>/<0, -120>[`;\eta]
\morphism(2300,500)/=>/<0, -120>[`;\omega_0]  
\efig
$$

A map of quantum graphs $(C, A) \to (C', A)$ amounts to comonoid maps $f :
C \to C'$ and $\varphi : A \rightarrow A'$ for which the diagrams

\begin{equation} 
\bfig
\square(250,800)[A`C`A'`C';s`\varphi`f`s]
\square(1600,800)[A`C`A'`C';t`\varphi`f`t]
\efig
\label{mg}
\end{equation}

\noindent commute. The pair $(f, \varphi)$ is a functor of quantum categories
if additionally it satisfies:

$$
\bfig
\Ctriangle/>``>/<250,250>[A\otimes_C A`A\otimes_{C'} A`A'\otimes_{C'}
A';\iota``\varphi\otimes_{C'} \varphi] 
\square(250,0)/>``>`>/<500,500>[A\otimes_C A`A`A'\otimes_{C'}
A'`A';\nu_2``\varphi`\nu_2] \square(1600,0)[C`A`C'`A';\nu_0`f`\varphi`\nu_0]
\efig
$$

\noindent The tensor product $A\otimes_{C'} A$ of $A$ with itself over
$C'$ is taken by regarding $A$ as a comodule $C' \spanarr C'$ with
left and right coactions:

$$A \to^{\delta} A\otimes A \to^{1\otimes f} A\otimes A' \to^{1\otimes s}
A\otimes C' \to^c C'\otimes A$$

$$A \to^{\delta} A\otimes A \to^{1\otimes f} A\otimes A' \to^{1\otimes s}
A\otimes C'.$$

\noindent  Observe that in a quantum functor the map $f$ is determined by
the map $\varphi$ via $f = \varphi\nu_0$.

The notion of the quantum functor includes the notion of functor between
small categories and the notion of weak morphism of bialgebroids \cite{S}.

By Section 2 an opmorphism between monoidal comonads is determined by
an opmonoidal morphism between the Eilenberg-Moore objects. Thus given a
quantum functor $(f, \varphi) : (C, A) \rightarrow (C', A')$ the comodule
$\varphi_\ast : A \spanarr A'$ has an opmonoidal morphism structure which lifts
the opmonoidal morphism structure on $f^{\ast o}\otimes f_\ast^o : C^o\otimes C
\spanarr C^o\otimes C$. By application of the functor $\mathrm{MonComod}(I, -)
: \mathrm{MonComod} \rightarrow \mathrm{Mon}Cat$ we get an opmonoidal functor
between categories of right $A$-comodules:

$$\mathrm{Comod}(I, A) \to^{\mathrm{MonComod}(I, \varphi_\ast)}
\mathrm{Comod}(I, A')\text{.}$$

Define composition of quantum functors $\varphi : (C, A) \to
(C', A')$ and $\varphi' : (C', A') \to (C'', A'').$ by 

$$A \to^{\varphi} A' \to^{\varphi'} A''.$$

\noindent The units for this composition are provided by quantum functors of
the form $(1, 1)$.

\begin{theorem} Quantum categories and functors between them form a category
$\mathrm{qCat}\mathcal{V}$.
\end{theorem}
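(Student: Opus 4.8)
The plan is to check the three defining properties of a category, namely that the composite of two quantum functors is again a quantum functor, that composition is associative, and that the identities $(1_C,1_A)$ are two-sided units. The genuine content lies in the first of these; associativity and the unit laws will be essentially formal.

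For closure under composition I would work through the identification, established just before the theorem, of a quantum functor $(f,\varphi)$ with an opmorphism of monoidal comonads of the special form $(f^{\ast o}\otimes f_\ast,\sigma)$. Opmorphisms of monoidal comonads compose: an opmorphism is an opmonoidal morphism together with a square exhibiting it as a comonad map, and by the double-category machinery of Section 2 squares compose horizontally and the comonad-map condition is preserved, so the composite of $(f^{\ast o}\otimes f_\ast,\sigma)$ with $(f'^{\ast o}\otimes f'_\ast,\sigma')$ is again an opmorphism of monoidal comonads. What must then be verified is that it is again of the \emph{special form} attached to a single comonoid map. This is where pseudofunctoriality of the assignments $f\mapsto f^\ast$ and $f\mapsto f_\ast$ enters: the canonical isomorphisms $f'^{\ast}\circ f^{\ast}\cong (f'f)^{\ast}$ and $f_\ast\circ f'_\ast\cong (f'f)_\ast$ yield $(f'^{\ast o}\otimes f'_\ast)\circ(f^{\ast o}\otimes f_\ast)\cong (f'f)^{\ast o}\otimes (f'f)_\ast$. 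Thus the composite opmorphism is carried by the comonoid map $f'f$, and under the Eilenberg--Moore correspondence its lifted opmonoidal structure is $(\varphi'\varphi)_\ast$; that is, it corresponds to exactly the comonoid map $\varphi'\varphi$ used to define composition of quantum functors.

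If instead one prefers to verify closure concretely, one checks the three axiom diagrams for $(f'f,\varphi'\varphi)$ directly. The source/target compatibilities \eqref{mg} and the unit square both compose by horizontal pasting, so these are immediate. The one non-routine step, which I expect to be the main obstacle, is the composition axiom, the diagram whose top edge is $A\otimes_C A\to A\otimes_{C'} A\to A'\otimes_{C'} A'$ together with $\nu_2$: it mixes the relative tensor products over the three base comonoids $C$, $C'$ and $C''$. The crux is to show that the comparison maps $\iota$ between these tensor products are coherent, so that the canonical map $A\otimes_C A\to A\otimes_{C''} A$ factors through $A\otimes_{C'} A$, and then to paste the two given composition squares along this factorization. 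This base-change bookkeeping for $\nu_2$ is the only place where a genuine verification, rather than a formal appeal, is required; the conceptual argument of the previous paragraph is precisely what lets one avoid carrying it out by hand.

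Finally, the identity $(1_C,1_A)$ satisfies every quantum-functor axiom trivially, since each diagram collapses to an identity, so it is a quantum functor and corresponds to the identity opmorphism. The remaining category axioms then reduce to the corresponding facts for comonoid maps in $\mathcal{V}$: composition of quantum functors is defined on underlying data by $\varphi'\varphi$, and the comonoid map $f$ is determined by $\varphi$ (via $f=\varphi\nu_0$), so associativity of $\circ$ and the unit laws $1\circ\varphi=\varphi=\varphi\circ 1$ are inherited from associativity and unitality of composition in $\mathcal{V}$. Assembling closure, identities, associativity and the unit laws produces the category $\mathrm{qCat}\mathcal{V}$.
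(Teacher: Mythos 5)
Your proposal is correct and follows exactly the route the paper itself sets up: the paper states this theorem without an explicit proof, having just defined composition by $\varphi'\varphi$ and units $(1,1)$, and having identified quantum functors with opmorphisms of monoidal comonads of the form $(f^{\ast o}\otimes f_\ast,\sigma)$ --- the identification your first paragraph exploits. Your verification (closure via composition of opmorphisms of monoidal comonads together with pseudofunctoriality of $(-)^\ast$ and $(-)_\ast$ and the Eilenberg--Moore lifting, with associativity and the unit laws inherited from composition of comonoid maps in $\mathcal{V}$) is precisely the routine argument the paper leaves implicit.
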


\begin{definition} A natural transformation $\tau
: (f, \varphi) \Rightarrow (g, \varphi')$

$$\bfig
\morphism(0,0)|a|/{@{>}@/^1.3em/}/<700,0>[(A,C)`(A',C');(f, \varphi)]
\morphism(0,0)|b|/{@{>}@/_1.3em/}/<700,0>[(A,C)`(A',C');(g, \varphi')]
\morphism(350,50)|a|/=>/<0,-100>[`;\tau]
\efig$$

\noindent between functors is a 2-cell

$$\bfig
\square/@{>}|-*@{|}`@{>}|-*@{|}`@{>}|-*@{|}`@{>}|-*@{|}/<600, 500>[C^o\otimes
C`C^o\otimes C`C'^o\otimes C'`C'^o\otimes C';A`f^{\ast o}\otimes g_\ast`f^{\ast o}\otimes g_\ast`A'] 
\morphism(300,300)/=>/<0,-100>[`;\tau]
\efig
$$

\noindent making $f^{\ast o}\otimes g_\ast$ into a comonad map so that both the
left coaction of $f^{\ast o}\otimes f_\ast$ and the right coaction of $g^{\ast
o}\otimes g_\ast$ on $f^{\ast o}\otimes g_\ast$ respect the comonad structure. 
\end{definition}

Here are the equalities which the 2-cell $\tau$ should satisfy.

$$
\bfig
\Atriangle(0,400)|mab|/>``@{>}|-*@{|}/<500,400>[C^o\otimes C\otimes C^o\otimes C`C'^o\otimes C'\otimes C'^o\otimes C'`C'^o\otimes C'\otimes C'^o\otimes C';f^{\ast o}\otimes f_\ast\otimes f^{o\ast}\otimes g_\ast``A'\otimes A']
\Vtriangle(500,400)|aam|/@{>}|-*@{|}``>/<500,400>[C^o\otimes C\otimes C^o\otimes C`C^o\otimes C\otimes C^o\otimes C`C'^o\otimes C'\otimes C'^o\otimes C';A\otimes A``f^{\ast o}\otimes f_\ast\otimes f^{\ast o}\otimes g_\ast]
\Atriangle(1000,400)/`@{>}|-*@{|}`/<500,400>[C^o\otimes C\otimes C^o\otimes C`C'^o\otimes C'\otimes C'^o\otimes C'`C^o\otimes C;`p`]
\Vtriangle(0,0)/`@{>}|-*@{|}`/<500,400>[C'^o\otimes C'\otimes C'^o\otimes C'`C'^o\otimes C'\otimes C'^o\otimes C'`C'^o\otimes C';`p`]
\Atriangle(500,0)/`@{>}|-*@{|}`@{>}|-*@{|}/<500,400>[C'^o\otimes C'\otimes C'^o\otimes C'`C'^o\otimes C'`C'^o\otimes C';`p`A']
\Vtriangle(1000,0)/``@{>}|-*@{|}/<500,400>[C'^o\otimes C'\otimes C'^o\otimes C'`C^o\otimes C`C'^o\otimes C';``f^{\ast o}\otimes g_\ast]
\morphism(850,250)|b|/=>/<0, -100>[`;\mu_2]
\morphism(850,650)|b|/=>/<0, -100>[`;\varphi\otimes \tau]
\morphism(1550,450)|b|/=>/<-100,-100>[`;\lambda_l]
\efig
$$

$$=$$

$$
\bfig
\Atriangle(0,400)|maa|/>`@{>}|-*@{|}`/<500,400>[C^o\otimes C\otimes C^o\otimes C`C'^o\otimes C'\otimes C'^o\otimes C'`C^o\otimes C;f^{\ast o}\otimes f_\ast\otimes f^{o\ast}\otimes g_\ast`p`]
\Vtriangle(500,400)/@{>}|-*@{|}``/<500,400>[C^o\otimes C\otimes C^o\otimes C`C^o\otimes C\otimes C^o\otimes C`C^o\otimes C;A\otimes A``]
\Atriangle(1000,400)/`@{>}|-*@{|}`@{>}|-*@{|}/<500,400>[C^o\otimes C\otimes C^o\otimes C`C^o\otimes C`C^o\otimes C;`p`A]
\Vtriangle(0,0)/`@{>}|-*@{|}`@{>}|-*@{|}/<500,400>[C'^o\otimes C'\otimes C'^o\otimes C'`C^o\otimes C`C'^o\otimes C';`p`f^{\ast o}\otimes g_\ast]
\Atriangle(500,0)/``@{>}|-*@{|}/<500,400>[C^o\otimes C`C'^o\otimes C'`C'^o\otimes C';``A']
\Vtriangle(1000,0)/``@{>}|-*@{|}/<500,400>[C^o\otimes C`C^o\otimes C`C'^o\otimes C';``f^{\ast o}\otimes g_\ast]
\morphism(1300,250)|b|/=>/<0, -100>[`;\tau]
\morphism(1300,650)|b|/=>/<0, -100>[`;\mu_2]
\morphism(550,450)|b|/=>/<-100,-100>[`;\lambda_l]
\efig
$$

$$
\bfig
\Atriangle(0,400)|mab|/>``@{>}|-*@{|}/<500,400>[C^o\otimes C\otimes C^o\otimes C`C'^o\otimes C'\otimes C'^o\otimes C'`C'^o\otimes C'\otimes C'^o\otimes C';f^{\ast o}\otimes g_\ast\otimes g^{\ast o}\otimes g_\ast``A'\otimes A']
\Vtriangle(500,400)|aam|/@{>}|-*@{|}``>/<500,400>[C^o\otimes C\otimes C^o\otimes C`C^o\otimes C\otimes C^o\otimes C`C'^o\otimes C'\otimes C'^o\otimes C';A\otimes A``f^{\ast o}\otimes g_\ast\otimes g^{\ast o}\otimes g_\ast]
\Atriangle(1000,400)/`@{>}|-*@{|}`/<500,400>[C^o\otimes C\otimes C^o\otimes C`C'^o\otimes C'\otimes C'^o\otimes C'`C^o\otimes C;`p`]
\Vtriangle(0,0)/`@{>}|-*@{|}`/<500,400>[C'^o\otimes C'\otimes C'^o\otimes C'`C'^o\otimes C'\otimes C'^o\otimes C'`C'^o\otimes C';`p`]
\Atriangle(500,0)/`@{>}|-*@{|}`@{>}|-*@{|}/<500,400>[C'^o\otimes C'\otimes C'^o\otimes C'`C'^o\otimes C'`C'^o\otimes C';`p`A']
\Vtriangle(1000,0)/``@{>}|-*@{|}/<500,400>[C'^o\otimes C'\otimes C'^o\otimes C'`C^o\otimes C`C'^o\otimes C';``f^{\ast o}\otimes g_\ast]
\morphism(850,250)|b|/=>/<0, -100>[`;\mu_2]
\morphism(850,650)|b|/=>/<0, -100>[`;\tau\otimes\varphi']
\morphism(1550,450)|b|/=>/<-100,-100>[`;\lambda_r]
\efig
$$

$$=$$

$$
\bfig
\Atriangle(0,400)|maa|/>`@{>}|-*@{|}`/<500,400>[C^o\otimes C\otimes C^o\otimes C`C'^o\otimes C'\otimes C'^o\otimes C'`C^o\otimes C;f^{\ast o}\otimes g_\ast\otimes g^{\ast o}\otimes g_\ast`p`]
\Vtriangle(500,400)/@{>}|-*@{|}``/<500,400>[C^o\otimes C\otimes C^o\otimes C`C^o\otimes C\otimes C^o\otimes C`C^o\otimes C;A\otimes A``]
\Atriangle(1000,400)/`@{>}|-*@{|}`@{>}|-*@{|}/<500,400>[C^o\otimes C\otimes C^o\otimes C`C^o\otimes C`C^o\otimes C;`p`A]
\Vtriangle(0,0)/`@{>}|-*@{|}`@{>}|-*@{|}/<500,400>[C'^o\otimes C'\otimes C'^o\otimes C'`C^o\otimes C`C'^o\otimes C';`p`f^{\ast 0}\otimes g_\ast]
\Atriangle(500,0)/``@{>}|-*@{|}/<500,400>[C^o\otimes C`C'^o\otimes C'`C'^o\otimes C';``A']
\Vtriangle(1000,0)/``@{>}|-*@{|}/<500,400>[C^o\otimes C`C^o\otimes C`C'^o\otimes C';``f^{\ast o}\otimes g_\ast]
\morphism(1300,250)|b|/=>/<0, -100>[`;\tau]
\morphism(1300,650)|b|/=>/<0, -100>[`;\mu_2]
\morphism(550,450)|b|/=>/<-100,-100>[`;\lambda_r]
\efig
$$    

A natural transformation $\tau : (f, \varphi) \to (g, \varphi') : (C, A) \to
(C', A')$ amounts to a comonoid map $\tau : A \to A'$ such that:

\begin{equation}
\bfig
\square(250,800)[A`C`A'`C';s`\tau`f`s]
\square(1500,800)[A`C`A'`C';t`\tau`g`t]
\efig
\label{mg}
\end{equation}
 
$$\bfig
\dtriangle/<-``/<300,300>[A'\otimes_C A'`A\otimes_C
A`;\varphi\otimes_C\tau``] 
\square(300,0)/>```/<600,300>[A'\otimes_CA'`A'\otimes_{C'}
A'``;i```] 
\qtriangle(0,-300)/`>`/<300,300>[A\otimes_C A``A'\otimes_C
A';`\tau\otimes_C\varphi`] 
\square(300,-300)/```>/<600,300>[``A'\otimes_C A'`A'\otimes_{C'}
A';```i]
\Dtriangle(900,-300)/`>`<-/<300,300>[A'\otimes_{C'}
A'`A'`A'\otimes_{C'}A';`\nu_2`\nu_2]
\morphism<600,0>[A\otimes_C A`A;\nu_2]
\morphism(600,0)<550,0>[A`A';\tau]
\efig
$$

\section{$\mathrm{qCat}$ as a functor}

A coreflexive-equalizer-preserving braided strong-monoidal functor $\mathcal{V}
\rightarrow \mathcal{W}$ defines a functor between the categories of quantum
categories $\mathrm{qCat}\mathcal{V} \rightarrow \mathrm{qCat}\mathcal{W}$. Thus
$\mathrm{qCat}$ can be viewed as a 2-functor from the 2-category of braided
monoidal categories (satisfying the condition at the beginning of Section 3) and braided
strong monoidal functors to the 2-category of categories. This functor preserves
finite products since we have isomorphisms

$$\mathrm{qCat}(\mathcal{V}\times\mathcal{W}) \cong
\mathrm{qCat}\mathcal{V}\times\mathrm{qCat}\mathcal{W}$$

$$\mathrm{qCat}(\mathrm{1}) \cong \mathrm{1}$$

\begin{example} When $\mathcal{V}$ is a symmetric monoidal category, then the
functors $- \otimes - : \mathcal{V}\times\mathcal{V} \rightarrow \mathcal{V}$,
$I : 1 \rightarrow \mathcal{V}$ are symmetric monoidal. From them we obtain functors

$$- \otimes - : \mathrm{qCat}\mathcal{V}\times\mathrm{qCat}\mathcal{V}
\rightarrow \mathrm{qCat}\mathcal{V}$$

$$I : \mathrm{1} \rightarrow \mathrm{qCat}\mathcal{V}$$

\noindent defining a monoidal structure on $\mathrm{qCat}\mathcal{V}$.
\end{example}

\begin{example}\label{e1}  
There is a functor $Set \rightarrow \mathcal{V}$, taking a set $S$ to the
$S$-fold coproduct $S\cdot I$ of the monoidal unit, provided these
copowers exist. When a certain distributivity law is statisfied, this functor is
strong monoidal. Any coreflexive equalizer in $Set$ (which does not involve an
empty set) is split, and thus preserved by any functor. We have
$\mathrm{qCat}Set = Cat$ \cite{quantumcat}. So we get a functor:

$$Cat \rightarrow \mathrm{qCat}\mathcal{V}$$
 
\noindent To wit any category determines a quantum category in any (sufficiently
good) monoidal category.
\end{example}

\begin{example}\label{e2}
Suppose that $\mathcal{V}$ has small coproducts, and assume that each of
the functors $X + -$ preserves coreflexive equalizers. For any finite set $S$
let $S\cdot V$ stand for the $S$-fold coproduct of an object $V$ of $\mathcal{V}$. There is a
coreflexive-equalizer-preserving braided strong-monoidal functor $-\cdot - :
Set_f\times\mathcal{V} \rightarrow \mathcal{V}$. The preservation of coreflexive
equalizeres is due to Lemma 0.17 in \cite{J}.  We have $\mathrm{qCat}Set_f =
Cat_f$. Thus we obtain a functor
 
\begin{equation}\label{cotensor}
Cat_f\times\mathrm{qCat}\mathcal{V} \rightarrow
\mathrm{qCat}\mathcal{V}
\end{equation}

\noindent Let $i: \Delta \rightarrow Cat_f$  be the canonical embedding of the
simplicial category into the category of finite categories. Precomposing
\eqref{cotensor} with $i\times1$ we obtain a functor

$$\Delta\times\mathrm{qCat}\mathcal{V} \rightarrow \mathrm{qCat}\mathcal{V}$$

Let $A$ and $B$ be quantum categories in $\mathcal{V}$. Consider the simplicial
set

\begin{equation}\label{hss}
\mathrm{qCat}\mathcal{V}(-\cdot A, B) \text{.}
\end{equation} 

\noindent The 1-simplexes of this simplicial set are the quantum functors
from $A$ to $B$. The 2-simplexes are the quantum natural transformation.
For quantum categories $A$, $B$ and $C$, there is a map of simplicial sets

\begin{equation}\label{composition}
\mathrm{qCat}\mathcal{V}(-\cdot A, B)\times
\mathrm{qCat}\mathcal{V}(-\cdot B, C) \rightarrow \mathrm{qCat}\mathcal{V}(-\cdot A, C)
\end{equation}

\noindent defined on the $n$-simplexes by

$$\mathrm{qCat}\mathcal{V}(n\cdot A, B)\times \mathrm{qCat}\mathcal{V}(n\cdot B,
C) \stackrel{(n\cdot-)\times1}{\longrightarrow}
\mathrm{qCat}\mathcal{V}(n\cdot n\cdot A, n\cdot B)\times \mathrm{qCat}\mathcal{V}(n\cdot B, C)
$$
$$\stackrel{\mathrm{qCat}\mathcal{V}(\delta_{(n, A)},
1)\times1}{\longrightarrow} \mathrm{qCat}\mathcal{V}(n\cdot A, n\cdot B)\times
\mathrm{qCat}\mathcal{V}(n\cdot B, C) \stackrel{\mathrm{comp}}{\longrightarrow}
\mathrm{qCat}\mathcal{V}(n\cdot A, C)\text{.}$$

\noindent Here $\delta$ is the natural transformation

$$
\bfig
\Vtriangle[Cat\times
\mathrm{qCat}\mathcal{V}`Cat\times
Cat\times\mathrm{qCat}\mathcal{V}`\mathcal{V};\mathrm{diag}\times1`-\cdot-`-\cdot-\cdot-]
\morphism(500,350)/=>/<0,-100>[`;\delta]
\efig
$$

\noindent obtained by applying $\mathrm{qCat}-$ to the obvious natural
transformation

$$
\bfig
\Vtriangle[Set\times
\mathcal{V}`Set\times Set\times \mathcal{V}`
\mathcal{V};\mathrm{diag}\times1`-\cdot-`-\cdot-\cdot-]
\morphism(500,350)/=>/<0,-100>[`;\delta]
\efig
$$

\noindent Using \eqref{composition} as compositions we can construct a
simplicial set enriched category with objects the quantum categories in
$\mathcal{V}$ and a hom simplicial set from $A$ to $B$ given by \eqref{hss}.

\begin{theorem}
For any (sufficiently good) braided monoidal category $\mathcal{V}$ the above
defines a simplicial set enriched category $\mathrm{qCat}\mathcal{V}$.
\end{theorem}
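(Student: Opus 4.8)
The plan is to recognise the whole construction as a standard enrichment produced by an \emph{actegory} together with the comonoid structure that every object of a cartesian monoidal category carries, and then to reduce the two enriched-category axioms to coherence that holds automatically in $Cat_f$. Concretely, I would first record that $\mathrm{qCat}\mathcal{V}$ is an actegory over the cartesian monoidal category $(Cat_f,\times,\mathbf{1})$. The copower $-\cdot- : Set_f\times\mathcal{V}\to\mathcal{V}$ is an action of $(Set_f,\times,1)$ on $\mathcal{V}$, with coherent constraints $(S\times T)\cdot V\cong S\cdot(T\cdot V)$ and $1\cdot V\cong V$. Applying the product-preserving $2$-functor $\mathrm{qCat}(-)$ and using $\mathrm{qCat}Set_f=Cat_f$ transports this to an action $\odot$ of $Cat_f$ on $\mathrm{qCat}\mathcal{V}$, with constraints $(C\times D)\odot A\cong C\odot(D\odot A)$ and $\mathbf{1}\odot A\cong A$; restricting along $i\colon\Delta\to Cat_f$ yields the functor \eqref{cotensor} used to form the hom simplicial sets \eqref{hss}.

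Next I would identify the enrichment data intrinsically. Because $Cat_f$ is cartesian, every object $n$ carries the canonical comonoid structure whose comultiplication is the diagonal $\mathrm{diag}\colon n\to n\times n$ and whose counit is the terminal map $\,!\colon n\to\mathbf{1}$; these are coassociative, counital, and natural in $n$. By construction the $2$-cell $\delta_{n,A}\colon n\odot A\to n\odot(n\odot A)$ is the image of $\mathrm{diag}$ under $-\odot A$ followed by the associativity constraint, and the identity $n$-simplex of $\mathcal{E}(A,A)=\mathrm{qCat}\mathcal{V}(-\odot A,A)$ is the quantum functor $n\odot A\to\mathbf{1}\odot A\cong A$ induced by $\,!$, so that the unit $\Delta^{0}\to\mathcal{E}(A,A)$ picks out the identity on $A$. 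Simpliciality of the composition \eqref{composition}, already asserted above, is then precisely the naturality of $\delta$ in the $\Delta$-variable, which is the naturality of the diagonal $\mathrm{diag}_n\circ\theta=(\theta\times\theta)\circ\mathrm{diag}_m$.

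Then I would verify the two enriched-category axioms by reducing each to a comonoid law. Writing the composite of $F\colon n\odot A\to B$ and $G\colon n\odot B\to C$ as $G\bullet F=G\circ(n\odot F)\circ\delta_{n,A}$, associativity $(H\bullet G)\bullet F=H\bullet(G\bullet F)$ unwinds on $n$-simplices to the two composites $H\circ(n\odot G)\circ\delta_{n,B}\circ(n\odot F)\circ\delta_{n,A}$ and $H\circ(n\odot G)\circ(n\odot(n\odot F))\circ(n\odot\delta_{n,A})\circ\delta_{n,A}$. Applying naturality of $\delta$ in the object variable to move $n\odot F$ past $\delta_{n,B}$ (giving $\delta_{n,B}\circ(n\odot F)=(n\odot(n\odot F))\circ\delta_{n,n\odot A}$) reduces the identity to the single equation $\delta_{n,n\odot A}\circ\delta_{n,A}=(n\odot\delta_{n,A})\circ\delta_{n,A}$, which under the associativity constraint is exactly the coassociativity $(\mathrm{diag}\times n)\circ\mathrm{diag}=(n\times\mathrm{diag})\circ\mathrm{diag}$ of the diagonal, holding because $Cat_f$ is cartesian. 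The right and left unit laws reduce in the same way to $(n\times\,!)\circ\mathrm{diag}=\mathrm{id}=(\,!\times n)\circ\mathrm{diag}$, combined with the unit constraint $\mathbf{1}\odot A\cong A$.

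The main obstacle is the bookkeeping of the coherence isomorphisms rather than any single hard identity. One must check that $\mathrm{qCat}\mathcal{V}$ genuinely inherits a pseudo-actegory structure from $\mathcal{V}$ under the product-preserving $2$-functor $\mathrm{qCat}(-)$, and that the associativity and unit constraints of this action are compatible with the definition of $\delta$, so that the reductions above are equalities of quantum functors and not merely equalities up to canonical isomorphism. Since the action is obtained by applying one product-preserving $2$-functor to the genuine copower actegory on $\mathcal{V}$, all the constraints are canonical and their mutual compatibility is forced; the remaining verification is where the work lies but is routine.
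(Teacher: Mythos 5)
Your proposal is correct and, in substance, coincides with the paper's own construction: the action $\odot$ you describe is exactly the functor \eqref{cotensor} obtained by applying the product-preserving $2$-functor $\mathrm{qCat}(-)$ to the copower functor $-\cdot - : Set_f\times\mathcal{V}\to\mathcal{V}$, and your $\delta_{n,A}$ is the paper's $\delta_{(n,A)}$, obtained by applying $\mathrm{qCat}$ to the diagonal-induced transformation on $Set\times\mathcal{V}$. The real difference is that the paper stops at the construction: it states the theorem with no verification of the enriched-category axioms whatsoever, whereas you make that verification explicit and organize it conceptually, deriving simpliciality of the composition \eqref{composition} from naturality of the diagonal, associativity from its coassociativity, and the unit laws from its counit law, all of which hold because $Cat_f$ is cartesian. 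This framing (a cartesian actegory together with the canonical diagonal comonoid on each object) is the right structural reason the theorem is true, and it also supplies the identity simplices (degeneracies of the $0$-simplex corresponding to $\mathbf{1}\odot A\cong A$), which the paper never specifies; the coherence bookkeeping you flag at the end is genuine but routine, exactly as you say, since all constraints arise by applying a single product-preserving $2$-functor to the coherent copower action on $\mathcal{V}$.
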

\end{example}

\begin{example}
Consider the category of families $\mathrm{Fam}\mathcal{V}$. An object of
$\mathrm{Fam}\mathcal{V}$ is a pair $(S, \{A_s\})$, where $S$ is a set and
$\{A_s\}$ is an $S$ indexed family of objects of $\mathcal{V}$. A morphism $(f,
\{\varphi_s\}) : (S, \{A_s\}) \to (S', (A_s'))$ consists of a map $f : S \to S'$
and for each $s$ in $S$ a morphism $ \varphi_s : A_s \to A_{f(s)}$ in
$\mathcal{V}$. The monoidal structure on $\mathcal{V}$ induces a monoidal
structure on $\mathrm{Fam}\mathcal{V}$ in the obvious way. Consider the functor

$$\mathrm{Fam}\mathcal{V} \to \mathcal{V}\text{.}$$

\noindent taking $(S, \{A_s\})$ to the coproduct $\coprod A_s$. This functor
is monoidal and under mild conditions on $\mathcal{V}$, it preserves
coreflexive equalizers. By applying the $\mathrm{qCat}$ we obtain a functor

$$\mathrm{qCatFam}\mathcal{V} \to \mathrm{qCat}\mathcal{V}\text{.}$$

The functor 

$$\mathrm{Fam}\mathcal{V} \to Set$$

\noindent taking $(S, \{A_s\})$ to the set $S$ determines a functor

\begin{equation}\label{fam}\mathrm{qCatFam}\mathcal{V} \to
Cat\text{.}\end{equation}

\noindent In this way each object in $\mathrm{qCatFam}\mathcal{V}$ has an
underlying category.

Next we show how Hopf group coalgebras introduced in
\cite{T} are quantum categories (groupoids \cite{quantumcat}).   

Let $G$ denote a group. A Hopf $G$-coalgebra consists of a family of
algebras $\{A_g\}$ indexed by elements $g$ of $G$ together with a family of
linear maps $\{A_{gg'} \to A_g\otimes A_{g'}\}$ and an antipode satisfying certain axioms. 
Such a Hopf group coalgebra without an
antipode is an object of $\mathrm{qCatFam}Vect^{\mathrm{op}}$ with the object of
objects $(1, \{I\})$ and the object of arrows $(G, \{A_g\})$. The underlying
category is the group $G$. Using the functor \eqref{fam} from a Hopf group
coalgebra we obtain a quantum category in $Vect^{\mathrm{op}}$, in other words a bialgebroid. 
The antipode would make this bialgebroid into a Hopf algebroid.

\end{example}

\section{Appendix: Computations for quantum categories in string diagrams}
We introduce framed string diagrams to represent morphisms in a braided
monoidal bicategory as an enrichment of the string diagrams of
\cite{JS}. These diagrams are designed to ease presentation of quantum
structures. 

A string diagram of \cite{JS} has edges labeled by objects of $\mathcal{V}$ and
nodes labeled by morphisms of $\mathcal{V}$ and represents
a morphism in $\mathcal{V}$. For example a morphism $f : X\otimes Y \to Z$ in
$\mathcal{V}$ is represented by a string diagram:

\begin{center}  
\includegraphics[scale=0.4]{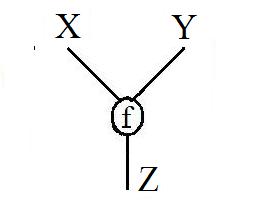}
\end{center}

\noindent The identity morphism on an object $X$ is represented by

\begin{center}  
\includegraphics[scale=0.4]{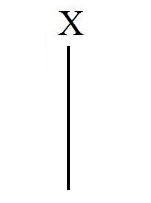}
\end{center}

\noindent The braiding isomorphisms $c$ and $c^{-1}$ are represented
respectively by

\begin{center}  
\includegraphics[scale=0.4]{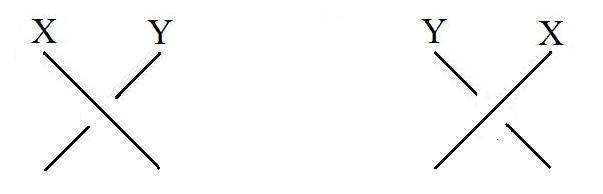}
\end{center}
 
\noindent Composition is by concatenation and tensoring is by juxtaposition.

A framed string diagram besides strings and nodes may have framed regions
labeled by comonoids in $\mathcal{V}$. A framed region labeled by a comonoid $C$ has two
strings passing through it, of which, the left string is labeled by a
right $C$-comodule and the right string is labeled by a left $C$-comodule. Such
a framed region corresponds to taking tensor product over $C$. Now we give the
description.

Suppose that $C$ is a comonoid in $\mathcal{V}$. Suppose that $M$ is a right
$C$-comodule and $N$ is a left $C$-comodule with coactions:

\begin{center}  
\includegraphics[scale=0.4]{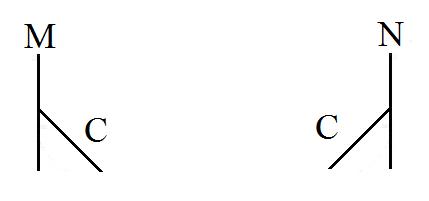}
\end{center}

\noindent The framed string diagram

\begin{center}  
\includegraphics[scale=0.4]{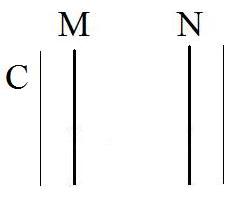}
\end{center}

\noindent represents the identity morphism on $M\otimes_C N$. The framed string
diagram

\begin{center}  
\includegraphics[scale=0.4]{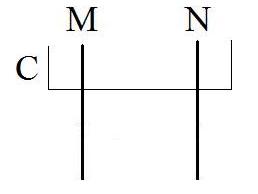}
\end{center}

\noindent represents a canonical injection $i : M\otimes_C N \to M\otimes N$. Observe that we have: 

\begin{center}  
\includegraphics[scale=0.4]{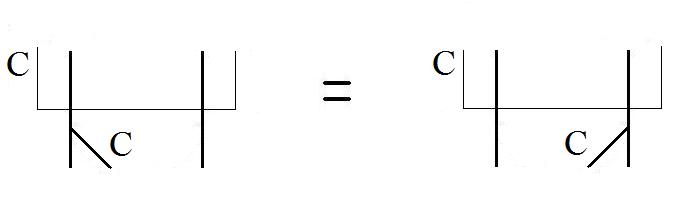}
\end{center}

\noindent The framed string diagram

\begin{center}  
\includegraphics[scale=0.4]{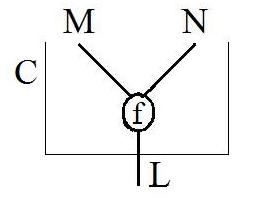}
\end{center}

\noindent represents a morphisms $M\otimes_C N \to L$. The following is a rule
for building a new string diagram from a given one. Suppose that a morphism
$\ldots \to \ldots M\otimes N \ldots$ is represented by a framed string diagram

\begin{center}  
\includegraphics[scale=0.4]{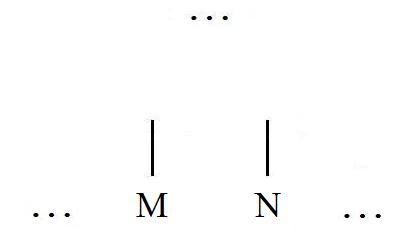}
\end{center}

\noindent and suppose that it factors through a morphism $\ldots \to \ldots
M\otimes_C N \ldots$; that is, the equality 

\begin{center}  
\includegraphics[scale=0.4]{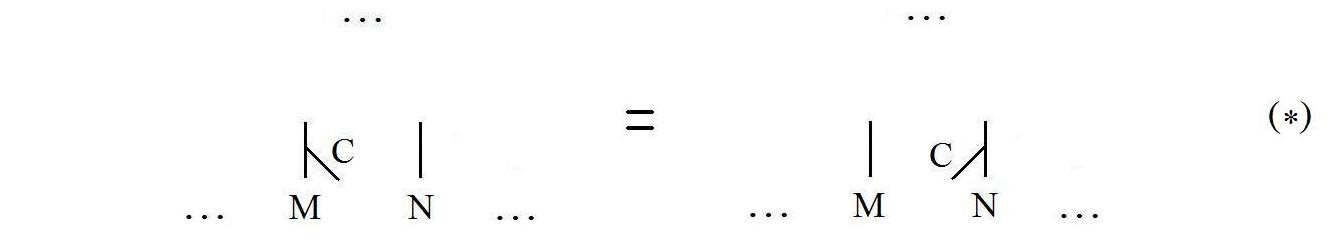}
\end{center}

\noindent holds, then the latter morphism is represented by

\begin{center}  
\includegraphics[scale=0.4]{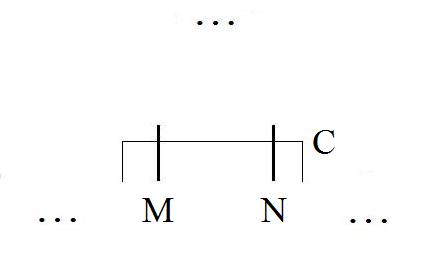}
\end{center}

\noindent Thus every time we want to introduce a new frame in a string diagram
using this rule an extra computation establishing $(\ast)$ should be performed.
We also consider overlapping of framed regions. If $M$ is a right $C$-comodule,
$N$ is a comodule from $C$ to $C'$ and $L$ is a left comodule, then

\begin{center}  
\includegraphics[scale=0.4]{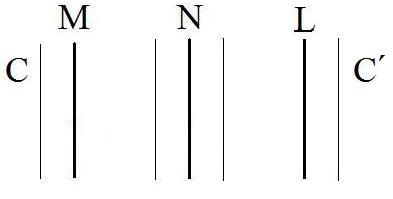}
\end{center}

\noindent represents the identity map on $M\otimes_CN\otimes_{C'}L$.
If $f : N\otimes_{C'}L \to K$ is a left $C$-comodule map and $g :
M\otimes_CN \to P$ is a right $C'$-comodule map, then the framed string diagrams

\begin{center}   
\includegraphics[scale=0.4]{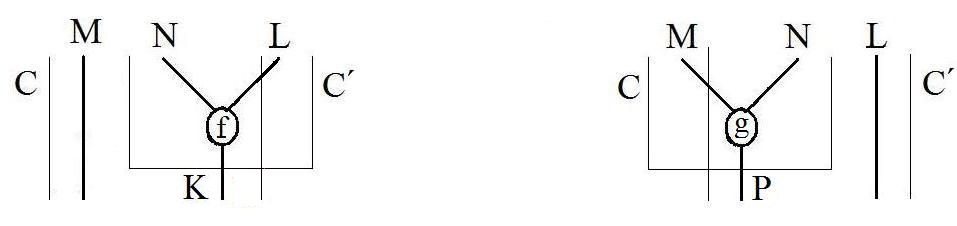}
\end{center}

\noindent represent $1\otimes_C f : M\otimes_CN\otimes_{C'}L \to M\otimes_{C'}K$
and $g\otimes_{C'}1 : M\otimes_CN\otimes_{C'}L \to P\otimes_{C'}L$ respectively.

A quantum graph consists of comonoids $C$ and $A$, for
comultiplications and counits of which we write

\begin{center}  
\includegraphics[scale=0.4]{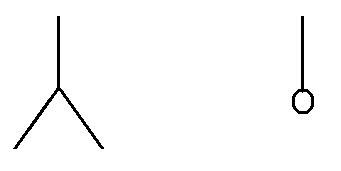}
\end{center}

\noindent and comonoid maps $s : A \to C$, $t : A
\to C$ related by

\begin{center}  
\includegraphics[scale=0.4]{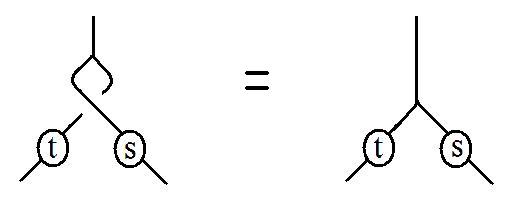}
\end{center}

\noindent A quantum category consists of a quantum graph together with the
composition map $\nu_2 : A\otimes_CA \to A$ and the identity map $\nu_0: C \to A$: 

\begin{center}  
\includegraphics[scale=0.4]{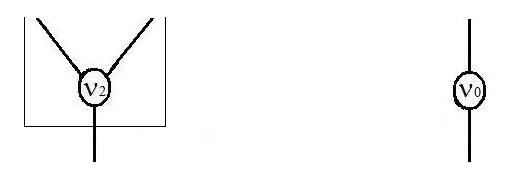}
\end{center}

\noindent which satisfy the six axioms in Statement \ref{statement}. Below we
quickly go through all of these axioms using framed string diagrams.

\noindent $A$ is regarded as a left and right $C$-comodule by coactions:

\begin{center}  
\includegraphics[scale=0.4]{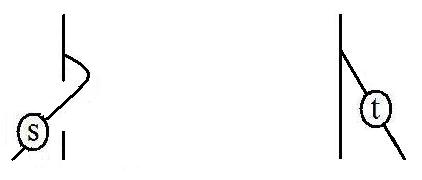}
\end{center}

\noindent The tensor product $H = A\otimes_CA$ of $A$ with itself over $C$ is a
left and a right $C$-comodule by coaction:

\begin{center}  
\includegraphics[scale=0.4]{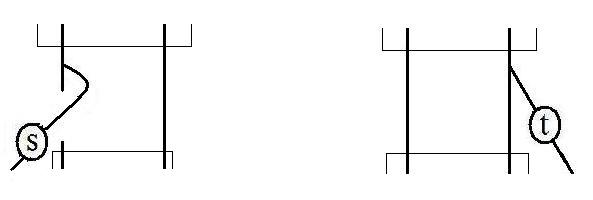}
\end{center}

\noindent Axiom 1 says that $(A, \nu_2, \nu_0)$ should be
a monoid in $\mathrm{Comod}\mathcal{V}(C, C)$. This means that the following
conditions should be satisfied.

\begin{center}  
\includegraphics[scale=0.4]{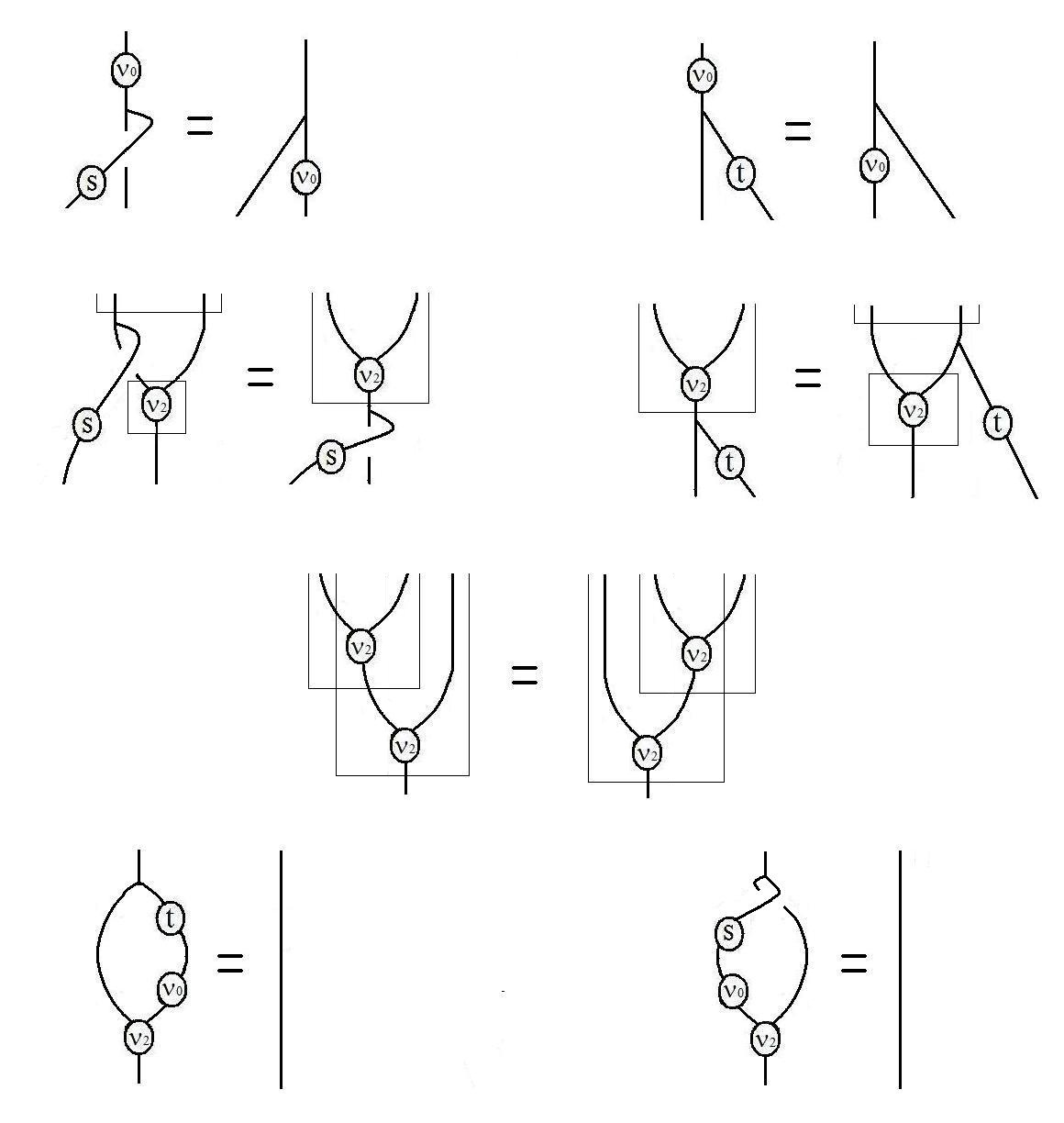}
\end{center}

\noindent We have:

\begin{center}  
\includegraphics[scale=0.4]{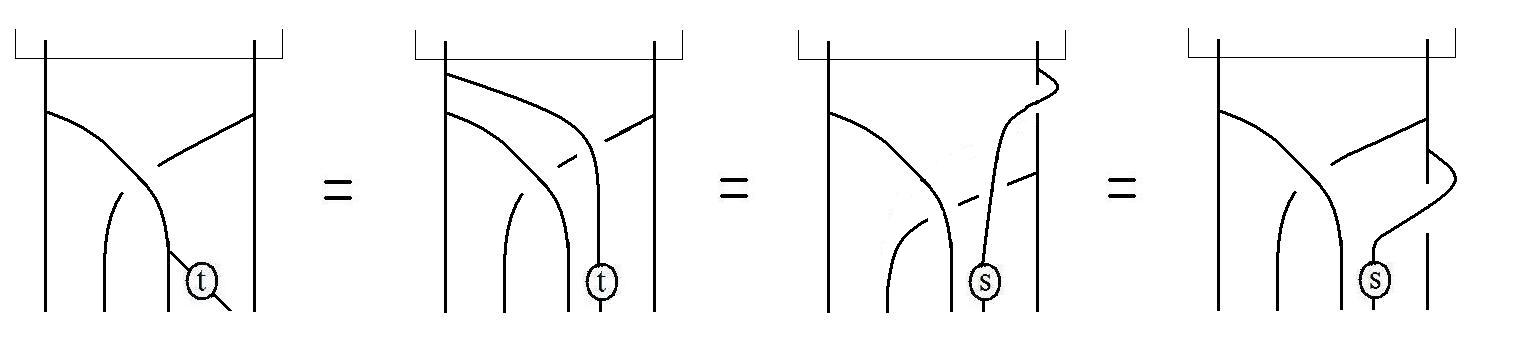}
\end{center}

\noindent Therefore we can form a framed string diagram

\begin{center}  
\includegraphics[scale=0.4]{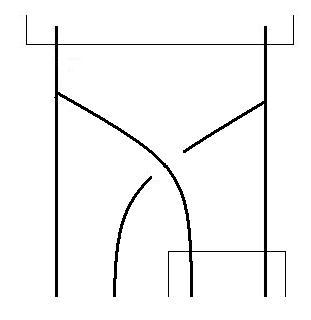}
\end{center}

\noindent This is the map $\gamma_l : H \to A\otimes A\otimes H$. Axiom 2 is:

\begin{center}  
\includegraphics[scale=0.4]{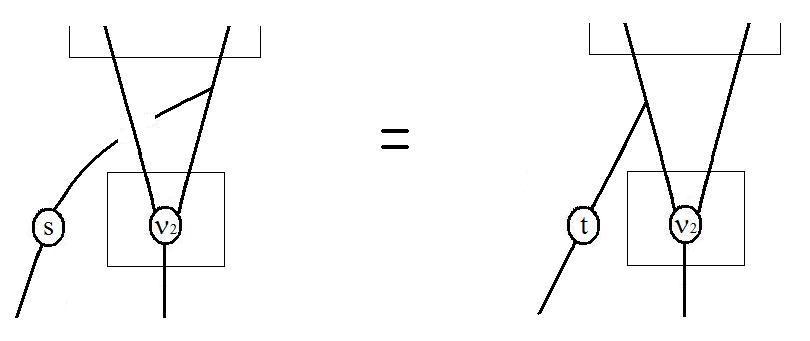}
\end{center}

\newpage

\noindent Using Axiom 2 we have:

\begin{center}  
\includegraphics[scale=0.4]{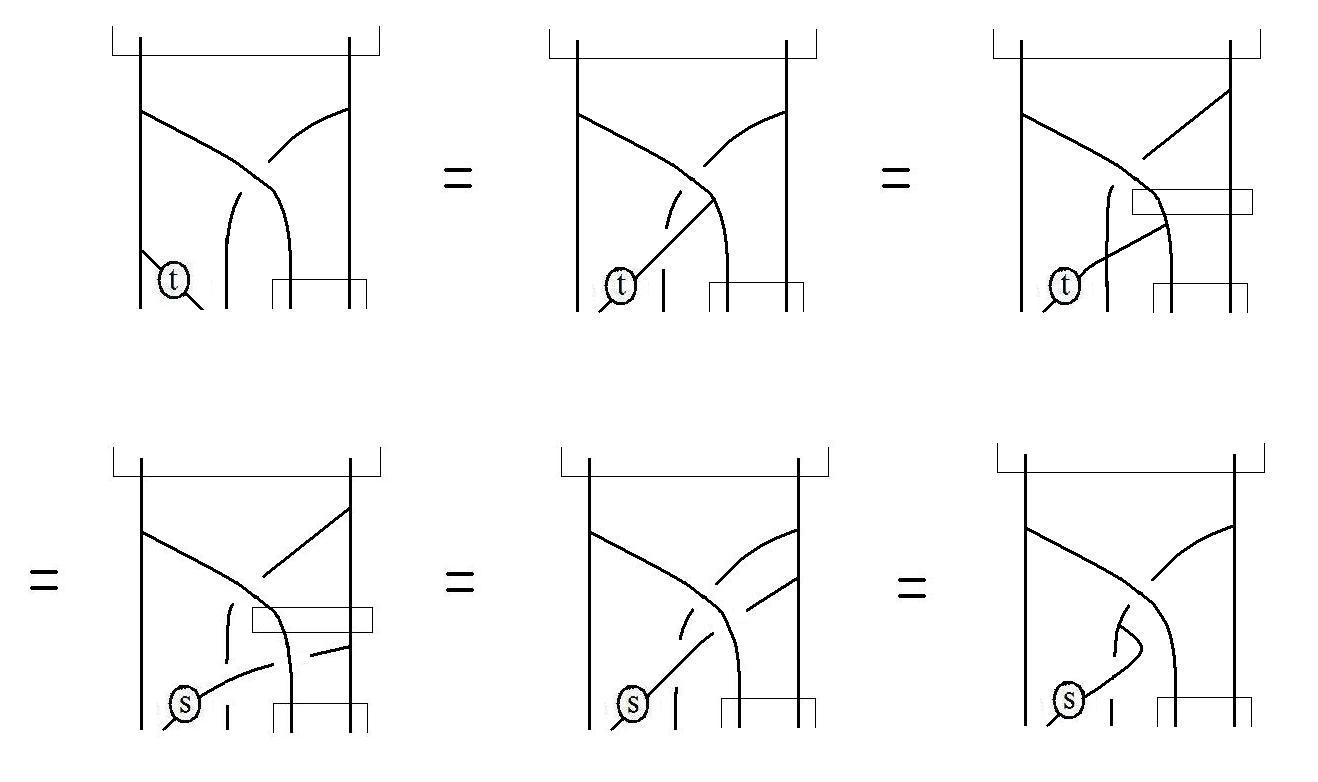}
\end{center}

\noindent Thus, we can form a framed string diagram

\begin{center}  
\includegraphics[scale=0.4]{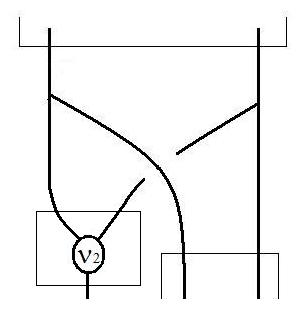}
\end{center} 

\noindent This is the map $\gamma_r : H \to H\otimes A$. Axiom 3:

\begin{center}  
\includegraphics[scale=0.4]{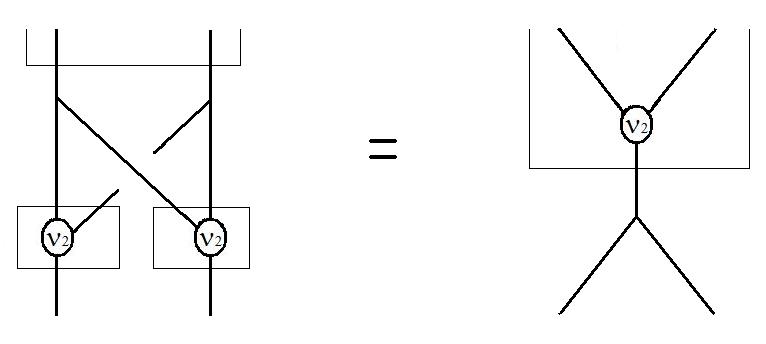}
\end{center}

\newpage

\noindent Axiom 4:

\begin{center}  
\includegraphics[scale=0.4]{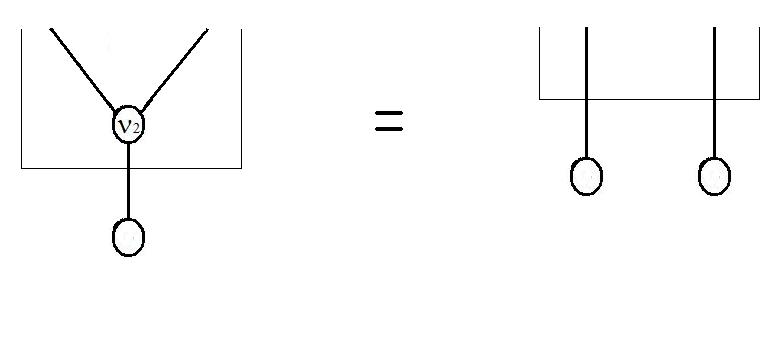}
\end{center}

\noindent Note that these string diagrams look exactly like the string diagrams
for two of the bialgebroid axioms. The map $\gamma_r : C \to C\otimes A$ is
either of:

\begin{center}  
\includegraphics[scale=0.4]{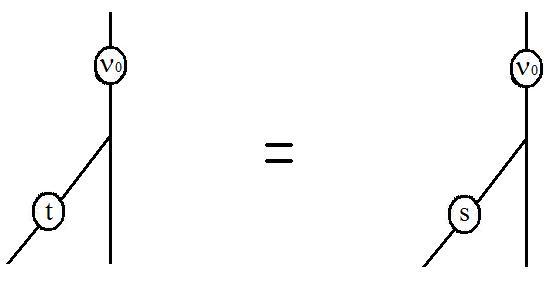}
\end{center}

\noindent Axiom 5:

\begin{center}  
\includegraphics[scale=0.4]{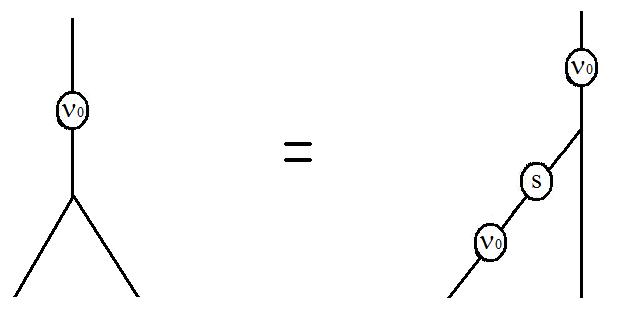}
\end{center}

\noindent Axiom 6:

\begin{center}  
\includegraphics[scale=0.4]{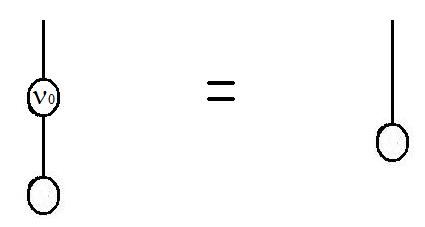}
\end{center}

\newpage

Axioms for a quantum functor $(f, \varphi) : (A, C) \to (A', C')$ in framed
string diagrams are:

\begin{center}  
\includegraphics[scale=0.4]{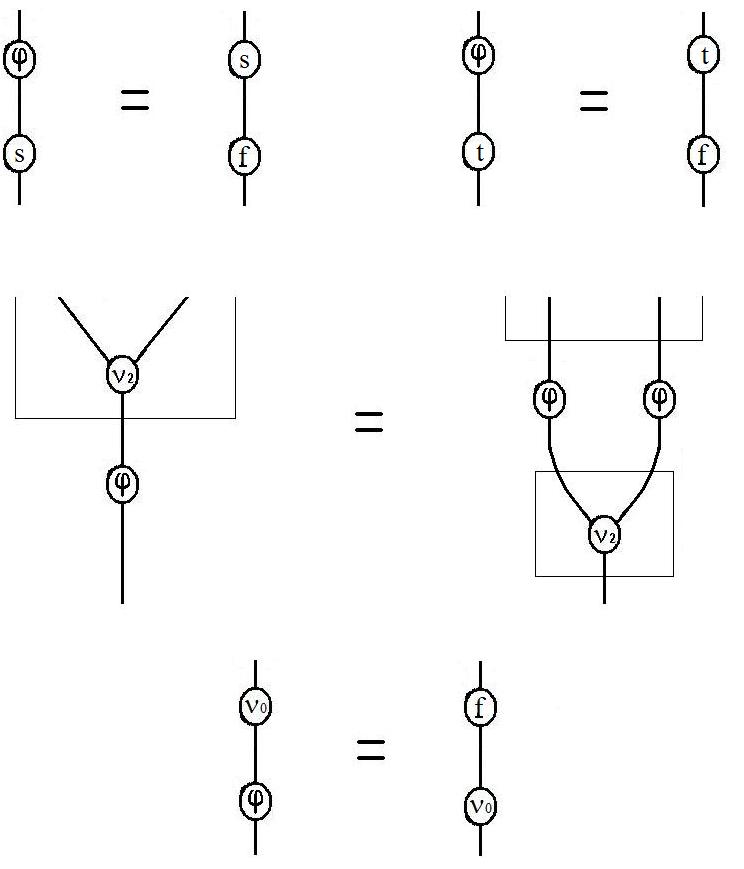}
\end{center}

Axioms for a quantum natural transformation $\tau : (f, \varphi) \to
(g, \varphi')$ in framed string diagrams are:

\begin{center}  
\includegraphics[scale=0.4]{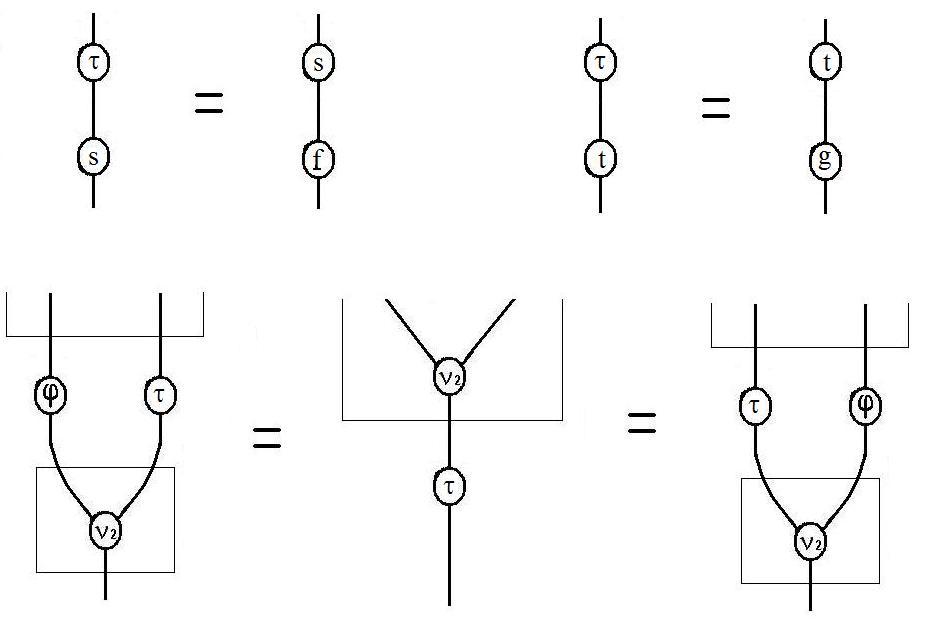}
\end{center}

\end{document}